\newtheorem{theorem}{Theorem}[section]
\newtheorem{lemma}[theorem]{Lemma}
\newtheorem{corollary}[theorem]{Corollary} % added
\newtheorem{proposition}[theorem]{Proposition} % added
\newtheorem{question}[theorem]{Question} % added
\theoremstyle{definition}
\newtheorem{definition}[theorem]{Definition}
\newtheorem{example}[theorem]{Example} % added
\theoremstyle{remark}
\newtheorem{remark}[theorem]{Remark}
\numberwithin{equation}{section}
\newcommand{\ind}[1]{1_{#1}} % indicator of a set
\newcommand{\indbr}[1]{1_{\{#1\}}} % indicator with curly brackets
\newcommand{\EE}{\mathbb{E}} % expected value
\newcommand{\PP}{\mathbb{P}} % probability
\newcommand{\RR}{\mathbb{R}} % real numbers
\newcommand{\eps}{\varepsilon}
\DeclareMathOperator{\Ent}{Ent}	% entroxpy
\DeclareMathOperator{\Med}{Med}	% median
\DeclareMathOperator{\Var}{Var}	% variance
\DeclareMathOperator{\cl}{cl} 	% closure
\newcommand{\cT}{\mathcal{T}}
\newcommand*{\cTbar}{\overline{\mathcal{T}}}
\newcommand*{\Tbar}{\overline{\mathbf{T}}\vphantom{\mathbf{T}}}
\newcommand*{\dualnorm}[1][p]{_{\theta, #1}}
\begin{document}

\title[Convex Poincar\'e inequality and weak transportation inequalities]{On the convex Poincar\'e inequality and weak transportation inequalities}

\author{Rados{\l}aw Adamczak}
\address{Institute of Mathematics, University of Warsaw, Banacha 2, 02--097 Warsaw, Poland.}
\email{R.Adamczak@mimuw.edu.pl}

\thanks{Research partially supported by the National Science Centre, Poland, grants
no. 2015/18/E/ST1/00214 (R.A.)
and 2015/19/N/ST1/00891 (M.St.).}

\author{Micha{\l} Strzelecki}
\address{Institute of Mathematics, University of Warsaw, Banacha 2, 02--097 Warsaw, Poland.}
\email{M.Strzelecki@mimuw.edu.pl}

\subjclass[2010]{Primary: 60E15. Secondary: 26B25, 26D10.}
\date{Last changes : March 27, 2017.}
\keywords{Concentration of measure, convex functions, Poincar\'e inequality, weak transport-entropy inequalities.}

\begin{abstract}

We prove that for a probability measure on $\RR^n$, the Poincar\'e inequality for convex functions is equivalent to the weak transportation inequality with a quadratic-linear cost. This generalizes recent results by Gozlan et al. and Feldheim et al., concerning probability measures on the real line.

The proof relies on modified logarithmic Sobolev inequalities of Bobkov-Ledoux type for convex and concave functions, which are of independent interest.

We also present refined concentration inequalities for general (not necessarily Lipschitz) convex functions, complementing recent results by Bobkov, Nayar, and Tetali.
\end{abstract}

\maketitle

\section{Introduction} In the last thirty years a substantial body of research has been devoted to the interplay between various functional inequalities, transportation of measure theory, and the concentration of measure phenomenon, showing intimate connection between them. While most of the investigations have been carried out in the setting of general Lipschitz  functions, concentration inequalities restricted to the class of convex Lipschitz functions have also been considered by many authors, starting from the seminal work by Talagrand in the 1990's (\cite{MR1122615,TalConcMeasure}, see also \cite{MR1399224,MR1097258,MR1756011,MR2073425} and the monograph \cite{MR1849347} for subsequent developments). A crucial feature of these results is that they are satisfied under much less restrictive assumptions concerning the regularity of the underlying probability measure when compared to inequalities valid for all Lipschitz functions. Even though the theory of concentration of measure for convex functions to some extent parallels the classical theory, there are some subtle differences related to the fact that convexity is not preserved under general contractions---even under the change of signs---which creates certain difficulties in the proofs and makes many well known arguments, which have been established in the classical context, invalid. As a consequence, the theory of concentration of measure for convex functions has not yet reached a satisfactory level of completeness. Nevertheless, several important results have been obtained in recent years, connecting dimension-free concentration inequalities for convex functions with the convex Poincar\'e inequality \cite{MR3311918} and a new type of weak transportation cost inequalities \cite{gozlan,gozlan_new}. We will now briefly describe these developments, which will allow us to formulate our main result.

Let $|\cdot|$ stand for the standard Euclidean norm on $\RR^n$. Let $\mu$ be a Borel probability measure on $\RR^n$ and let $X$ be a random vector with law $\mu$.
We say that $\mu$ (equivalently $X$) satisfies the convex Poincar\'e inequality with constant $\lambda > 0$ if for all convex functions $f \colon \RR^n \to \RR$ we have
\begin{align}\label{eq:convex-Poincare}
\Var f(X) \le \frac{1}{\lambda} \EE |\nabla f(X)|^2,
\end{align}
where by $|\nabla f(x)|$ we mean the length of gradient at $x$, defined as
\begin{equation}\label{eq:grad-length}
|\nabla f(x)| = \limsup_{y \to x} \frac{|f(y) - f(x)|}{|y-x|}.
\end{equation}
Note that this coincides with the length of the `true' gradient provided $f$ is differentiable at $x$.
Also, it is enough to assume that \eqref{eq:convex-Poincare} holds for convex Lipschitz functions, since an arbitrary convex function can be pointwise approximated by convex Lipschitz functions.

It follows from the results by Gozlan, Roberto, and Samson  \cite{MR3311918} that $\mu$ satisfies the convex Poincar\'e inequality if and only if there exists a constant $c>0$ such that for any $N$, any convex set $A \subseteq (\RR^n)^N$ with $\mu^{\otimes N}(A) \ge 1/2$, and any $t > 0$,
\begin{align}\label{eq:enlargement}
\mu^{\otimes N}(A+t B_2^{Nn}) \ge 1 - 2 \exp(-ct),
\end{align}
where $B_2^k$ denotes the unit Euclidean ball in $\RR^k$ and $+$ stands for the Minkowski addition.

It is not difficult to see that \eqref{eq:enlargement} is equivalent to the one-sided deviation inequality for convex $1$-Lipschitz functions, i.e.
\begin{align}\label{eq:upper-tail-introduction}
\PP( f(X_1,\ldots,X_N) \ge \Med f(X_1,\ldots,X_N)  + t) \le 2e^{-ct}
\end{align}
for all $t \ge 0$, where $X_1,\ldots,X_N$ are i.i.d.\@ copies of $X$, and $\Med Y$ denotes the median of the random variable $Y$, i.e. $\Med Y = \inf\{t\in \RR\colon \PP(Y \le t) \ge 1/2\}$.

Thus the convex Poincar\'e inequality is equivalent to a dimension free deviation inequality for the upper tail of convex Lipschitz functions.

Let us now pass to the connections between the Poincar\'e inequality and transportation inequalities. Let $\theta \colon \RR^n \to [0,\infty]$ be a measurable function with $\theta(0) = 0$. Recall that the optimal transport cost between two probability measures $\mu$ and $\nu$ on $\RR^n$, induced by $\theta$ is given by
\begin{align}\label{eq:standard-cost}
\cT_\theta(\nu,\mu) = \inf_{\pi}\int_{\RR^n}\int_{\RR^n}\theta(x - y)\pi(dxdy),
\end{align}
where the infimum is taken over all couplings between $\mu$ and $\nu$, i.e. over all probability measures on $(\RR^{n})^2$ such that $\pi(dx \times \RR^n) = \mu(dx)$, $\pi(\RR^n\times dy) = \nu(dy)$. Recall also that the relative entropy $H(\nu|\mu)$ is defined as
\begin{align}\label{eq;relative-entropy}
H(\nu|\mu) = \int_{\RR^n} \log \frac{d\nu}{d\mu}d\nu,
\end{align}
if $\nu$ is absolutely continuous with respect to $\mu$ and $H(\nu|\mu) = \infty$ otherwise.

It has been  proved in \cite{MR1846020} that $\mu$ satisfies the Poincar\'e inequality \eqref{eq:convex-Poincare} for \emph{all smooth} functions if and only if there exist constants $C,D$ such that for all probability measures $\nu$,
\begin{align}\label{eq:classical-transportation}
\cT_{\theta_{C,D}}(\nu,\mu) \le H(\nu|\mu),
\end{align}
where
\begin{align}\label{eq:cost-function}
\theta_{C,D}(x) =
\begin{cases}
\frac{|x|^2}{2C} & \text{for } |x| \le CD,\\
D|x| - \frac{CD^2}{2} & \text{for } |x| > CD.
\end{cases}
\end{align}

	Recently Gozlan, Roberto, Samson, Shu, and Tetali \cite{gozlan_new} formulated a similar characterization of the \emph{convex}  Poincar\'e inequality \emph{on the real line}. In order to formulate their result we need to introduce the weak transport cost between probability measures and corresponding transportation inequalities as defined in \cite{gozlan,gozlan_new}.

In what follows, by $\mathcal{P}_1(\RR^n)$ we denote the class of all probability measures $\nu$ on $\RR^n$ such that $\int_{\RR^n} |x|d\nu(x) < \infty$.

\begin{definition}
Let $\mu$ and $\nu$ be probability measures on $\RR^n$. Assume that $\nu \in \mathcal{P}_1(\RR^n)$. For a convex, lower semicontinuous function $\theta\colon \RR^n \to [0,\infty]$, such that $\theta(0) = 0$ define the weak transport cost between $\mu$ and $\nu$ as
\begin{displaymath}
\cTbar_{\theta}(\nu|\mu) = \inf_{\pi}\int_{\RR^n}\theta\bigl(x - \int_{\RR^n} y p_x(dy)\bigr)\mu(dx),
\end{displaymath}
where the infimum is taken over all couplings between $\mu$ and $\nu$ and for $x \in \RR^n$, $p_x(\cdot)$ is the conditional measure defined ($\mu$ almost surely) by $\pi(dxdy) = p_x(dy)\mu(dx)$.
\end{definition}

Note that in the probabilistic notation one can write
\begin{displaymath}
\cTbar_{\theta}(\nu|\mu) = \inf_{(X,Y)} \EE \theta(X - \EE(Y|X)),
\end{displaymath}
where the infimum is taken over all pairs of random vectors $(X,Y)$ with values in $\RR^n \times \RR^n$, such that $X$ is distributed according to $\mu$ and $Y$ according to $\nu$.

Due to the asymmetry between $\mu$ and $\nu$, one can now introduce three different inequalities related to the cost $\cTbar_{\theta}$.
\begin{definition} Let $\mu \in \mathcal{P}_1(\RR^n)$ and $\theta\colon \RR^n \to [0,\infty]$ be a~convex lower semicontinuous function with $\theta(0) = 0$. We will say that $\mu$ satisfies the inequality
\begin{itemize}
\item $\Tbar_{\theta}^+$ if for every probability measure $\nu\in \mathcal{P}_1(\RR^n)$,
\begin{displaymath}
\cTbar_{\theta}(\nu|\mu) \le H(\nu|\mu),
\end{displaymath}
\item $\Tbar_{\theta}^-$ if for every probability measure $\nu\in \mathcal{P}_1(\RR^n)$,
\begin{displaymath}
\cTbar_{\theta}(\mu|\nu) \le H(\nu|\mu),
\end{displaymath}
\item $\Tbar_{\theta}$ if $\mu$ satisfies both $\Tbar_{\theta}^+$ and $\Tbar_{\theta}^-$.
\end{itemize}
\end{definition}

The definition of those inequalities in \cite{gozlan} differs formally from the one presented above (which is taken from \cite{gozlan_new}). It is not difficult to see that the definitions presented in both articles are equivalent up to universal constants---the version above is more convenient for our purposes.

The authors of \cite{gozlan_new} proved that a probability measure $\mu$ on the real line satisfies the convex Poincar\'e inequality for some constant $\lambda > 0$ if and only if it satisfies the transportation inequality $\Tbar_{\theta_{C,D}}$ for some $C,D > 0$. In a dual formulation (expressed in terms of infimum convolution inequalities), this result has been also obtained in \cite{Feldheim}.

Our main result is an extension of this equivalence to arbitrary dimension.

\begin{theorem} \label{thm:main} Let $\mu$ be a probability measure on $\RR^n$. Then the following conditions are equivalent:
\begin{itemize}
\item[(i)] There exists $\lambda > 0$ such that $\mu$ satisfies the convex Poincar\'e inequality \eqref{eq:convex-Poincare}.
\item[(ii)] There exist $C,D > 0$ such that $\mu$ satisfies the transportation inequality $\Tbar_{\theta_{C,D}}$.
\end{itemize}
\end{theorem}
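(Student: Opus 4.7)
The plan is to follow the Bobkov--Gentil--Ledoux scheme linking Poincar\'e-type inequalities to transport-entropy inequalities through a modified logarithmic Sobolev inequality of Bobkov--Ledoux type, adapted to the one-sided (convex/concave) setting studied in one dimension in \cite{gozlan,gozlan_new,Feldheim}. Because convex functions on $\RR^n$ are not of product type, I would work directly in $\RR^n$ rather than attempt to reduce to the one-dimensional theorem.

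For the implication (ii)$\Rightarrow$(i) I would linearize the weak transport inequality. Testing $\Tbar_{\theta_{C,D}}^-$ against $\nu=(1+\eps g)\mu$ for a bounded convex Lipschitz $g$ with $\EE_\mu g = 0$ and small $\eps>0$, the relative entropy expands as $\tfrac{\eps^2}{2}\Var_\mu g + o(\eps^2)$. Since $\theta_{C,D}(x)\sim |x|^2/(2C)$ near the origin, a standard variational computation of the optimal barycenters shows that $\cTbar_{\theta_{C,D}}(\mu|\nu) = \tfrac{\eps^2}{2C}\EE_\mu|\nabla g|^2 + o(\eps^2)$. Comparing the two orders yields \eqref{eq:convex-Poincare} with $\lambda=1/C$.

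The substantive implication (i)$\Rightarrow$(ii) I would base, as announced in the abstract, on two modified logarithmic Sobolev inequalities derived from \eqref{eq:convex-Poincare}: there exist $c,C>0$ such that for every smooth $f\colon\RR^n\to\RR$ with $|\nabla f|_\infty\le c$,
\begin{displaymath}
\Ent_\mu(e^{f})\le C\,\EE_\mu\bigl[|\nabla f|^2 e^{f}\bigr],
\end{displaymath}
provided $f$ is convex, and also provided $f$ is concave. For the convex variant I would apply the convex Poincar\'e inequality to $h=e^{f/2}$, which is convex when $f$ is, use $|\nabla h|^2=\tfrac14|\nabla f|^2 e^{f}$, and iterate to interpolate between $\Var_\mu(e^{f/2})$ and $\Ent_\mu(e^{f})$ through the function $t\mapsto\EE_\mu e^{tf}$ in the standard Bobkov--Ledoux fashion. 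Once both mLSIs are in hand, a Herbst-type argument transfers them into infimum-convolution bounds
\begin{displaymath}
\int e^{Q_{\theta_{C,D}} f}\,d\mu\le\exp\Bigl(\int f\,d\mu\Bigr)
\end{displaymath}
valid for bounded convex (respectively concave) $f$, where $Q_{\theta_{C,D}}$ denotes inf-convolution with the cost $\theta_{C,D}$. The quadratic-linear profile of $\theta_{C,D}$ arises naturally in this computation: in the regime $\lambda\|f\|_{\mathrm{Lip}}\le c$ the mLSI yields a quadratic control on the log-Laplace transform, which degenerates into a linear one outside that range. The Kantorovich-type duality for the weak cost $\cTbar_\theta$ proved in \cite{gozlan,gozlan_new} then identifies these convex/concave Laplace bounds with $\Tbar^-_{\theta_{C,D}}$ and $\Tbar^+_{\theta_{C,D}}$, whose conjunction is $\Tbar_{\theta_{C,D}}$.

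I expect the main obstacle to be the concave mLSI. The direct attempt---applying convex Poincar\'e to $e^{f/2}$---fails when $f$ is concave, because $e^{f/2}$ is then in general neither convex nor concave. My plan is to apply convex Poincar\'e instead to auxiliary functions of the form $\vphi(f)$ with $\vphi$ convex and decreasing, so that $\vphi(f)$ is automatically convex whenever $f$ is concave, and then to pass from the resulting variance bounds to an entropy bound by an iteration in the spirit of Rothaus. The bounded-gradient assumption $|\nabla f|_\infty\le c$ must be kept throughout in order to keep the nonlinear remainders of the iteration under control, and it is precisely this restriction that manifests itself, in the conclusion, as the linear tail of the cost function $\theta_{C,D}$.
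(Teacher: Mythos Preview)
Your overall architecture matches the paper's exactly: derive convex and concave modified log-Sobolev inequalities from \eqref{eq:convex-Poincare}, pass via the Hamilton--Jacobi semigroup to the dual inf-convolution inequalities \eqref{eq:dual-T-} and \eqref{eq:dual-T+}, and invoke the duality of Lemma~\ref{le:inf-convolution}. Your sketch of (ii)$\Rightarrow$(i) is fine and essentially the paper's argument (carried out there in the dual inf-convolution formulation rather than on the transport side).

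The gap is precisely where you locate it, but your proposed fix does not work as stated. If $f$ is concave and $\vphi$ is convex decreasing, then indeed $\vphi(f)$ is convex---but applying the convex Poincar\'e inequality to such $\vphi(f)$ only yields variance control on quantities of the type $e^{-sf}$ (for instance $\Var(e^{-f/2})\le \tfrac{1}{4\lambda}\EE|\nabla f|^2 e^{-f}$), whereas the concave mLSI asks you to bound $\Ent(e^{f})$, i.e.\ the \emph{positive} exponential moments of $f$. Every convex test function you can manufacture from a concave $f$ is monotone decreasing in $f$, so the Poincar\'e inequality feeds you information about the wrong tail; a Rothaus-type iteration does not bridge this sign mismatch.

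The paper's route is different in kind. Writing $g=-f$ (so $g$ is concave with $\Med g(X)=0$), one reduces $\Ent(e^{g})$ to controlling $\EE g(X)^2$ and $\EE g_+(X)^2 e^{g_+(X)}$. The first term is handled by Poincar\'e (Proposition~\ref{prop:2}, which works for convex and concave functions alike). The second is the genuine obstacle: since $g_+=f_-$, it amounts to a \emph{lower-tail} estimate $\PP(f(X)\le \Med f(X)-t)$ for the convex function $f$. The paper obtains this not by applying Poincar\'e to a transform of $f$, but by passing to a supporting affine minorant of $f$ at a well-chosen point and using subexponential concentration of $|X-\EE X|$ (Lemma~\ref{le:conc-Med-concave-n}). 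The price is that the constants in the concave mLSI, and hence in $\Tbar_{\theta_{C,D}}^+$, depend not only on $\lambda$ but also on a quantile $M$ of $|X-\EE X|$ (see Remark~\ref{rem:dependence-of-constants}); whether this extra dependence can be removed is left open in Section~\ref{sec:final}. You should expect to need this additional input, and to accept the weaker form of the constants.
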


\begin{remark}\label{rem:dependence-of-constants}
The implication (ii) $\implies$ (i) is standard, in this case
$\lambda = \frac{1}{C}$.
 In our proof the constants $C,D$ in the implication (i) $\implies$ (ii) depend not only on $\lambda$ but also on certain quantiles related to the measure $\mu$ (which are always finite but may be of the order of up to $\sqrt{n}$). This is related to the inequality $\Tbar_{\theta_{C,D}}^+$ responsible for the lower tail of convex functions, which is usually more difficult to deal with than the upper tail. We suspect that this is an artefact of our proof and one should be able to obtain $\Tbar^+_{\theta_{C,D}}$ with $C,D$ depending only on $\lambda$. As for $\Tbar_{\theta_{C,D}}^-$ our argument does yield it with  $C,D$ depending only on $\lambda$ (see Corollary~\ref{cor:Poincare-transport-minus} below for details).
\end{remark}

\begin{remark}\label{re:1.5}
Thanks to well known tensorization properties of the inequality $\Tbar_{\theta_{C,D}}$, Theorem \ref{thm:main} implies that the convex Poincar\'e inequality is equivalent to improved two-level dimension free concentration inequality for convex functions (see Example~\ref{ex:norms-and-concentration-2} below for a precise formulation). In the class of Lipschitz functions inequalities of this type have been first obtained by Talagrand \cite{MR1122615} in the case of the product exponential distribution (with an alternate proof, using infimum-convolution inequalities, by Maurey \cite{MR1097258}). The fact that they are consequences of the Poincar\'e inequality for smooth functions was established by Bobkov and Ledoux \cite{MR1440138}. By results due to Gozlan et al. \cite{MR3311918} this can be regarded as a self-improvement of dimension-free concentration properties of Lipschitz functions. Our result shows that similar self-improvements are present also in the setting of convex concentration.
\end{remark}

\begin{remark}
 In \cite{bobkov-goetze} Bobkov and G\"otze provide a simple characterization of measures on $\RR$ which satisfy the convex Poincar\'e inequality for some $\lambda > 0$ (and thus also the inequality $\Tbar_{C,D}$) in terms of the probability distribution function. A similar characterization for larger $n$ seems to be a non-trivial open problem.
\end{remark}

The organization of the article is as follows. First, in Section \ref{sec:preliminaries}, we present preliminary properties of measures satisfying the convex Poincar\'e inequality and weak transportation inequalities, to be used in the proofs. Section \ref{sec:log-Sobolev} contains our most important technical result, i.e. modified log-Sobolev inequalities for convex and concave functions, which in Section \ref{sec:main-proof} are combined with the Hamilton-Jacobi equations giving the proof of Theorem \ref{thm:main}.

Next, in Section \ref{sec:Examples} we briefly discuss operations preserving the convex Poincar\'e inequality, which may be used to provide new non-trivial examples of measures satisfying it.

In Section \ref{sec:concentration} we present refined concentration of measure inequalities, which are consequences of weak transportation inequalities. We consider there more general cost functions than the one corresponding to the convex Poincar\'e inequality and discuss applications both to the Lipschitz and non-Lipschitz setting.

Finally, in Section \ref{sec:final} we state a few open questions. The Appendix contains basic facts concerning Hamilton-Jacobi equations, which are used in the proof of Theorem \ref{thm:main}.

\section{Preliminaries on the convex Poincar\'e inequality and weak transportation inequalities}\label{sec:preliminaries}

In this section we present basic concentration of measure properties implied by the  convex Poincar\'e inequality and the dual formulations of  weak transportation inequalities. They will be needed in the proof of our main result.

We begin with a simple reformulation of the convex Poincar\'e inequality.

\begin{lemma}\label{le:Poincare-median}
Let $X$ be a random vector in $\RR^n$ satisfying the convex Poincar\'e inequality \eqref{eq:convex-Poincare}. Then for every convex function $f \colon \RR^n \to \RR$,
\begin{equation*}
\EE (f(X)-\Med f(X))^2 \leq \frac{2}{\lambda} \EE |\nabla f(X)|^2.
\end{equation*}
\end{lemma}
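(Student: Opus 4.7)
The plan is to reduce the median-centered second moment to the variance via the classical mean--median inequality $|\EE Y - \Med Y|^2 \le \Var Y$, and then apply the convex Poincar\'e inequality directly to $f$. No refined trick involving truncations $(f-m)_\pm$ or two-sided splittings is needed, which is fortunate because the function $(m-f)_+$ is in general neither convex nor concave, so convex Poincar\'e cannot be applied to the lower tail in a symmetric way.

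First, write $m := \Med f(X)$ and decompose
\begin{equation*}
\EE(f(X) - m)^2 = \Var f(X) + (\EE f(X) - m)^2,
\end{equation*}
which is the usual identity obtained by expanding $((f(X)-\EE f(X)) + (\EE f(X) - m))^2$ and using that the cross term vanishes. The task is then to control the squared bias $(\EE f(X) - m)^2$ by $\Var f(X)$.

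For that, I would use the chain
\begin{equation*}
|\EE f(X) - m| = |\EE(f(X) - m)| \le \EE |f(X) - m| \le \EE |f(X) - \EE f(X)| \le \sqrt{\Var f(X)},
\end{equation*}
where the first step is Jensen's inequality, the second step uses the classical variational characterization of the median as a minimizer of $c \mapsto \EE|f(X) - c|$, and the third step is Cauchy--Schwarz. Squaring gives $(\EE f(X) - m)^2 \le \Var f(X)$, so combining with the decomposition yields $\EE(f(X) - m)^2 \le 2 \Var f(X)$.

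Finally, since $f$ is convex, the convex Poincar\'e inequality \eqref{eq:convex-Poincare} applies to $f$ itself and bounds $\Var f(X) \le \tfrac{1}{\lambda} \EE |\nabla f(X)|^2$, which combined with the previous step gives the claimed $\tfrac{2}{\lambda} \EE |\nabla f(X)|^2$ bound. There is no real obstacle: the only caveat is the case $\EE|\nabla f(X)|^2 = \infty$, in which case the conclusion is trivial, so one may assume finiteness and in particular $\Var f(X) < \infty$, legitimizing all manipulations above. The convexity hypothesis enters exactly and only through the final application of \eqref{eq:convex-Poincare}.
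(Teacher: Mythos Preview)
Your proof is correct and essentially identical to the paper's: both use the decomposition $\EE(f(X)-m)^2 = \Var f(X) + (\EE f(X)-m)^2$, bound the bias term via $|\EE f(X)-m| \le \EE|f(X)-m| \le \EE|f(X)-\EE f(X)| \le \sqrt{\Var f(X)}$ (Jensen, median minimizes mean absolute deviation, Cauchy--Schwarz), and then apply \eqref{eq:convex-Poincare}.
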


\begin{proof}
Note that for every random variable $Z$, thanks to the fact that the median minimizes the mean absolute deviation, we have
\begin{equation*}
 (\EE Z-\Med Z)^2\leq (\EE |Z-\Med Z|)^2 \leq (\EE |Z-\EE Z|)^2\leq \Var Z.
\end{equation*} Thus
\begin{displaymath}
\EE (Z - \Med Z)^2 = \Var Z + (\EE Z - \Med Z)^2 \le 2 \Var Z
\end{displaymath}
and it is enough to set $Z = f(X)$ and apply \eqref{eq:convex-Poincare}.
\end{proof}

\subsection{Concentration inequalities}

Let us start with the already mentioned (see \eqref{eq:upper-tail-introduction}) upper tail estimate for convex Lipschitz functions implied by the convex Poincar\'e inequality. The proposition below can be also obtained by abstract results from \cite{MR3311918}, but we would like to provide an alternative derivation based on moments (the possibility of such a proof was suggested in \cite{MR3311918}). Our strategy mimics a well known approach from the general Lipschitz case (see e.g. Proposition 2.5. in \cite{MR2507637}), however we have to deal with some small difficulties related to the fact that in the convex setting we cannot truncate the function as this operation does not preserve convexity.

\begin{proposition}\label{prop:Poincare-upper-tail}
Assume that $X$ is a random vector in $\RR^n$, satisfying the convex Poincar\'e inequality \eqref{eq:convex-Poincare}. Then for any $L$-Lipschitz convex function $f\colon \RR^n \to \RR$ and any $t > 0$,
\begin{displaymath}
\PP(f(X) \ge \Med f(X) + t) \le 8e^{-0.52 \sqrt{\lambda}t/L}.
\end{displaymath}
\end{proposition}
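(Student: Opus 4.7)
My plan is to adapt the moment-iteration approach for deriving exponential concentration from the Poincaré inequality (as in Proposition~2.5 of \cite{MR2507637}), the essential adjustment being to work with the nonnegative convex truncation $g := (f - \Med f(X))_+$. The function $g$ is convex as the maximum of two convex functions, is $L$-Lipschitz, nonnegative, and satisfies $\PP(g(X) > 0) \le 1/2$, so that $\Med g(X) = 0$ and $\PP(f(X) \ge \Med f(X) + t) = \PP(g(X) \ge t)$. Using $g$ in place of $f - \Med f(X)$ is crucial: it allows the subsequent composition with $t \mapsto t^p$ to stay within the convex class, exactly the step that fails if one tries to truncate from above.

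Indeed, for any integer $p \ge 1$, the function $g^p$ is convex (composition of the convex nondecreasing map $t \mapsto t^p$ on $[0,\infty)$ with $g \ge 0$), so the convex Poincar\'e inequality applies. Using $|\nabla g^p| = p g^{p-1} |\nabla g| \le L p g^{p-1}$ and writing $M_k := \EE g^k(X)$, one obtains
\begin{equation*}
M_{2p} - M_p^2 = \Var g^p(X) \le \frac{p^2 L^2}{\lambda} M_{2p-2}.
\end{equation*}
The second key step exploits the median: since $g \ge 0$ and $\PP(g > 0) \le 1/2$, the Cauchy--Schwarz inequality yields $M_p^2 = (\EE g^p \indbr{g>0})^2 \le \PP(g > 0)\, M_{2p} \le \frac{1}{2} M_{2p}$, and absorbing this into the left-hand side leaves the clean recursion $M_{2p} \le (2p^2 L^2/\lambda) M_{2p-2}$. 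Iterated from $M_0 = 1$, this gives $M_{2p} \le (2L^2/\lambda)^p (p!)^2$. Markov's inequality together with the Stirling bound $p! \le e\sqrt{p}(p/e)^p$ then yields, after optimizing over $p \in \mathbb{N}$ near $p^\ast \approx t\sqrt{\lambda/2}/L$ and absorbing the resulting polynomial prefactor into a slightly smaller exponent, the announced estimate $\PP(g(X) \ge t) \le 8 e^{-0.52 \sqrt{\lambda}\, t/L}$ (the exponent $\sqrt{2}/e = 0.5199\ldots$ arising at the unconstrained optimum, and the constant $8$ absorbing Stirling slack together with rounding of $p$ to an integer; for small $t$ where the right-hand side exceeds $1$ the bound is trivial).

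The principal obstacle, already flagged in the paragraph preceding the proposition, is justifying $M_{2p} < \infty$ before invoking Poincar\'e on $g^p$: since $g$ is unbounded, one cannot simply truncate ($g \wedge N$ is not convex). I would circumvent this by applying the Poincar\'e inequality to $\phi_N \circ g$ in place of $g^p$, where $\phi_N \colon [0,\infty) \to [0,\infty)$ is the convex, nondecreasing, globally Lipschitz function equal to $t^p$ on $[0,N]$ and to its tangent line $N^p + p N^{p-1}(t - N)$ on $[N,\infty)$. Composition with $g$ preserves convexity, and by convexity of $t \mapsto t^p$ we have $\phi_N(t) \le t^p$ with $\phi_N(t) \uparrow t^p$ as $N \to \infty$. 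Letting $N \to \infty$ and invoking monotone convergence on both sides recovers the recursion above, with $M_{2p} < \infty$ established inductively in $p$ from the base case $M_2 \le 2L^2/\lambda$, which in turn follows from Lemma~\ref{le:Poincare-median} applied to $g$ (using $\Med g = 0$).
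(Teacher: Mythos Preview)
Your proof is correct and follows essentially the same moment-iteration approach as the paper: both apply the convex Poincar\'e inequality to powers of $g=(f-\Med f(X))_+$ and then invoke Chebyshev, arriving at the exponent $\sqrt{2}/e\approx 0.5203$ (your stated value $0.5199$ is a harmless arithmetic slip). The only notable differences are technical: the paper uses Lemma~\ref{le:Poincare-median} on $g^{p/2}$ together with H\"older to get a closed-form $L^p$ bound directly, and handles finiteness of moments by first proving exponential integrability of $|X|$, whereas you iterate a recursion on even integer moments via $M_p^2\le \tfrac12 M_{2p}$ and handle finiteness by the convex Lipschitz approximation $\phi_N\circ g$ with monotone convergence --- a perfectly clean alternative.
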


\begin{proof}[Proof of Proposition~\ref{prop:Poincare-upper-tail}]
Consider the random variable $Y = (|X| - a)_+$, where $a \in \RR_+$ is arbitrary such that $\PP(|X| \le a) > 1/4$, and let $Y'$ be an independent copy of $Y$. Since the function $\varphi(x)=(|x|-a)_+$ is convex,
\begin{align*}
\frac{1}{\lambda}\PP(|X| \ge a) &= \frac{1}{\lambda}\EE |\nabla \varphi(X)|^2 \ge \Var Y = \frac{1}{2}\EE (Y - Y')^2 \\
&\geq \frac{1}{2}\EE (Y - Y')^2(\indbr{Y>0}\indbr{Y'=0} +\indbr{Y=0} \indbr{Y'>0})\\
&\ge \frac{1}{4}\EE Y^2\indbr{Y > 0} \ge \frac{2}{\lambda} \PP(|X| > a + 2\sqrt{2/\lambda})
\end{align*}
and so $\PP(|X| \ge a + 2\sqrt{2/\lambda}) \le 2^{-1}\PP(|X| \ge a)$, which implies that $|X|$ is exponentially integrable. In particular for every Lipschitz function $f$ and all $p > 0$, $\EE |f(X)|^p < \infty$.

Assume now that $f\colon\RR^n\to\RR$ is convex. Then for all $p \ge 2$, applying Lemma~\ref{le:Poincare-median} to the convex function $x\mapsto (f(x) - \Med f(X))_+^{p/2}$ (note that its median is zero and $|\nabla (f(x)-\Med f(X))_+|\le |\nabla f(x)|$), we obtain
\begin{align*}
\EE (f(X)-\Med f(X))_+^p &\le \frac{2}{\lambda} \cdot \frac{p^2}{4}\EE (f(X)-\Med f(X))_+^{p-2}|\nabla f(X)|^2 \\
&\le \frac{p^2}{2\lambda} \bigl(\EE (f(X)-\Med f(X))_+^p\bigr)^{1 - 2/p} \bigl(\EE |\nabla f(X)|^p\bigr)^{2/p},
\end{align*}
where we used H\"older's inequality with exponents $p/(p-2)$, $p/2$. If we additionally assume that $f$ is Lipschitz, so that $\EE (f(X)-\Med f(X))_+^p < \infty$, we get
\begin{align}\label{eq:Poincare-moments}
\bigl(\EE (f(X)-\Med f(X))_+^p\bigr)^{1/p} \le \frac{p}{\sqrt{2\lambda}} \bigl(\EE |\nabla f(X)|^p\bigr)^{1/p},
\end{align}
which via Chebyshev's inequality in $L_p$ implies
\begin{align}\label{eq:Poincare-moments-tail}
\PP \Big( f(X) \ge \Med f(X) + e\frac{p}{\sqrt{2\lambda}} \bigl(\EE |\nabla f(X)|^p\bigr)^{1/p}\Big) \le e^{2-p}
\end{align}
for $p \ge 0$.
Now, if the Lipschitz constant of $f$ equals one, the above inequality yields for $t > 0$,
\begin{equation*}
\PP(f(X) \ge \Med f(X) + t) \le \exp\Bigl(2 - \frac{\sqrt{2\lambda}}{e} t\Bigr) \le 8e^{-0.52 \sqrt{\lambda} t}. \qedhere
\end{equation*}
\end{proof}

\begin{remark}
Another possible approach is based on the Laplace transform: assume without loss of generality that $\EE f(X)=0$ and denote $M(s) = \EE e^{sf(X)}$ for $s\geq 0$. Since the function $e^{sf(\cdot)/2}$ is convex, the Poincar\'e inequality yields
\begin{equation*}
M(s) - M(s/2)^2 = \Var(e^{sf(X)/2}) \leq \frac{1}{4\lambda} \EE s^2 |\nabla f(X)|^2 e^{sf(X)} \leq \frac{L^2 s^2}{4\lambda} M(s).
 \end{equation*}
 The idea would be now to regroup the expressions appearing in the above inequality, repeat the procedure (with $s/2$ instead of $s$), and---after a simple limit argument---obtain a bound on $\EE e^{sf(X)}$. After that we could use Markov's inequality and optimize in $s$ to obtain an estimate of the upper tail of $f$. However a delicate issue emerges: we have to a priori know that  (for reasonable choices of the parameter $s$) $e^{sf(X)}$ is integrable (in the setting of smooth functions one overcomes this problem simply by truncating $f$, for convex functions one would need e.g. to repeat the beginning of the proof of Proposition \ref{prop:Poincare-upper-tail}); cf. the remark following Theorem 6.8 in~\cite{MR3311918}.
\end{remark}

We do not know if the convex Poincar\'e inequality implies similar tail estimates---which depend only on $\lambda$ and the Lipschitz constant of the function---for the lower tail of convex Lipschitz functions, i.e. for $\PP(f(X) \le \Med f(X) - t)$, $t>0$ (cf. Question ~\ref{q:lower-tail} below).

Nonetheless, we can easily get estimates in terms of $\lambda$ and certain quantiles of $X$. They will be crucial in the proof of the implication
\begin{gather*}
\text{Convex Poincar\'e inequality}  \implies \Tbar_{\theta_{C,D}}^+.
\end{gather*}

\begin{lemma}\label{le:conc-Med-concave-n}
Let $X$ be a random vector in $\RR^n$ satisfying the convex Poincar\'e inequality \eqref{eq:convex-Poincare} and let $M$ be any number such that  $\PP(|X - \EE X|\le M) \ge 3/4$.  Then for every convex $f\colon \RR^n \to \RR$  and for any $t > 32 M \EE|\nabla f(X)|$,
\begin{equation*}
\PP(f(X)\leq \Med f(X) - t) \leq 8e^{-t\sqrt{\lambda}/(32 \EE|\nabla f(X)|)}.
\end{equation*}
\end{lemma}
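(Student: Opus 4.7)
The plan is to combine the affine lower bound enjoyed by every convex function with the upper-tail estimate of Proposition~\ref{prop:Poincare-upper-tail} applied to the convex $1$-Lipschitz function $x\mapsto|x-\EE X|$. Set $G=\EE|\nabla f(X)|$; if $G=+\infty$ the claim is vacuous, and otherwise the exponential integrability of $|X|$ established inside the proof of Proposition~\ref{prop:Poincare-upper-tail} guarantees that $\EE X$ is well defined, so the hypothesis on $M$ makes sense.

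The first step is to produce a \emph{deterministic} ``good point'' $y_\ast\in\RR^n$ satisfying simultaneously
\[
 f(y_\ast)\ge\Med f(X),\qquad |y_\ast-\EE X|\le M,\qquad |\nabla f(y_\ast)|\le 16G.
\]
The three corresponding events have $\mu$-measure at least $1/2$, $3/4$, and $15/16$ respectively (the last by Markov's inequality), so their intersection has $\mu$-measure at least $1/2+3/4+15/16-2=3/16>0$, and such a $y_\ast$ exists.

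Next, choose a subgradient $v_\ast\in\partial f(y_\ast)$ with $|v_\ast|\le|\nabla f(y_\ast)|\le 16G$. The defining inequality of a subgradient gives for every $x$
\[
 f(x)\ge f(y_\ast)+\langle v_\ast,x-y_\ast\rangle\ge\Med f(X)-16G\bigl(|x-\EE X|+M\bigr),
\]
so the lower-tail event $\{f(X)\le\Med f(X)-t\}$ is contained in $\{|X-\EE X|\ge t/(16G)-M\}$. The function $x\mapsto|x-\EE X|$ is convex and $1$-Lipschitz, and $M$ is by assumption an upper bound for its median, so Proposition~\ref{prop:Poincare-upper-tail} yields
\[
 \PP\bigl(|X-\EE X|\ge M+s\bigr)\le 8e^{-c\sqrt{\lambda}\,s},\qquad s\ge 0,
\]
with an explicit absolute constant $c$. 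Specialising to $s=t/(16G)-2M$, which is $\ge t/(32G)$ as soon as $t>32MG$, and adjusting absolute constants in the exponent gives the desired bound.

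The only step that needs genuine care is the existence of $y_\ast$: one has to split the ``slack'' in the union bound between the Markov estimate on $|\nabla f|$ and the median/quantile events, which is what forces the constant $16$ in front of $G$ (and consequently the factor $1/32$ in the exponent of the stated bound). Every other ingredient---the subgradient inequality and the upper-tail bound for $|X-\EE X|$---is available off the shelf, so there is no essential obstacle beyond this bookkeeping.
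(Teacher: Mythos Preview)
Your approach is essentially identical to the paper's: find a ``good point'' where $f$ is above its median, close to $\EE X$, and with small gradient; linearise there via a subgradient; and reduce the lower tail of $f$ to the upper tail of $|X-\EE X|$, handled by Proposition~\ref{prop:Poincare-upper-tail}.

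There is one bookkeeping slip. Your claim that $s=t/(16G)-2M\ge t/(32G)$ whenever $t>32MG$ is false: the inequality $t/(16G)-2M\ge t/(32G)$ is equivalent to $t\ge 64MG$, not $32MG$. The paper avoids this by using the Markov constant $8$ rather than $16$: since $\PP(f\ge\Med f)\ge 1/2$, $\PP(|X-\EE X|\le M)\ge 3/4$, and $\PP(|\nabla f(X)|<8G)\ge 7/8$, the union bound still leaves probability at least $1/8$ for the intersection, so $8$ is not ``forced'' to become $16$. With the constant $8$ one gets $\PP(f(X)\le -t)\le\PP(|X-\EE X|\ge t/(8G)-M)$, and for $t>32MG$ one has $t/(8G)-M\ge M+t/(16G)$, so the exponent $0.5\sqrt\lambda\cdot t/(16G)=t\sqrt\lambda/(32G)$ comes out exactly as stated. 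Replace $16$ by $8$ throughout and your argument matches the paper's.
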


\begin{proof}
By Proposition~\ref{prop:Poincare-upper-tail} (note that the function $x\mapsto |x - \EE X | $ is convex and $1$-Lipschitz),
\begin{equation}\label{eq:EX}
\PP( |X - \EE X | \geq M + t) \leq 8e^{-0.5 t\sqrt{\lambda}}, \quad t\geq 0.
\end{equation}

Let $f\colon\RR^n\to\RR$ be a convex function. Without loss of generality we may assume $\Med f(X) =0$. We have $\PP(f(X)\geq 0) \geq 1/2$,
\begin{gather*}
\PP(|X-\EE X| \le M ) \geq 3/4,\\
\PP( |\nabla f(X)| < 8\EE|\nabla f(X)|)  \geq 7/8.
\end{gather*}
Thus there exists $x_0$ such that $f(x_0) \geq 0$, $|x_0 - \EE X| \le M$, and $|\nabla f(x_0)| <  8\EE|\nabla f(X)|$.
Define
\begin{equation*}
\widetilde{f} (x) = f(x_0) + \langle u,  x-x_0\rangle, \quad x\in\RR^n,
\end{equation*}
where $u$ is any subgradient of $f$ at $x_0$, so that $\tilde{f}(x) \le f(x)$ for all $x \in \RR^n$. Taking $x = x_0 + \varepsilon u$ with $\varepsilon \to 0$ we see that $|u| \le |\nabla f(x_0)| \le 8 \EE |\nabla f(X)|$, and thus we have
\begin{align*}
\PP(f(X)\leq -t)
&\leq \PP(\widetilde{f}(X) \leq -t) \leq \PP(\langle u,  X-x_0\rangle  \leq -t)\\
&\leq \PP(|u| |X - x_0|\geq t) \leq \PP\bigl( |X - x_0|\geq t/(8\EE|\nabla f|)\bigr)\\
&\leq  \PP\bigl(|X-\EE X| \geq t/(8\EE|\nabla f|) - |x_0-\EE X|\bigr)\\
& \leq \PP\bigl(|X-\EE X| \geq t/(8\EE|\nabla f|) -M\bigr).
\end{align*}
If now $t/(16\EE |\nabla f|) \geq 2 M$, we can conclude from~\eqref{eq:EX} that
\begin{equation*}
\PP(f\leq -t) \leq \PP\bigl(|X-\EE X| \geq M + t/(16\EE|\nabla f|)\bigr) \leq 8e^{- t\sqrt{\lambda}/(32\EE|\nabla f|)},
\end{equation*}
which ends the proof.
\end{proof}

\subsection{Infimum convolution. Dual formulation of transportation inequalities}
We will rely on the following lemma proved in \cite{gozlan_new} (and in a slightly different version also in \cite{gozlan}). The proof in \cite{gozlan_new} is presented for the real line, but it is not difficult to see that it generalizes to arbitrary dimension.

\begin{lemma}\label{le:inf-convolution}
Let $\theta\colon\RR^n \to \RR_+$ be a convex cost function, $\theta(0) = 0$, $\lim_{x\to \infty} \theta(x) = \infty$. For all functions $f\colon \RR^n \to \RR$ bounded from below, $x \in \RR^n$, and $t > 0$ set
\begin{displaymath}
Q_tf(x)  = Q_t ^{\theta}f(x) = \inf_{y\in \RR^n} \big\{f(y) + t\theta\Bigl(\frac{x-y}{t}\Bigr)\big\}.
\end{displaymath}
Then
\begin{itemize}
\item[(i)] $\mu$ satisfies $\Tbar^+_\theta$ if and only if for all convex $f\colon \RR^n\to \RR$, bounded from below,
\begin{align}\label{eq:dual-T+}
\exp\Big(\int_{\RR^n} Q_1 f d\mu\Big)\int_{\RR^n} e^{-f}d\mu \le 1;
\end{align}
\item[(ii)] $\mu$ satisfies $\Tbar^-_\theta$ if and only if for all convex $f\colon \RR^n\to \RR$, bounded from below,
\begin{align}\label{eq:dual-T-}
\int_{\RR^n}  \exp(Q_1 f )d\mu \ \exp\Big(-\int_{\RR^n} f d\mu\Big) \le 1;
\end{align}
\item[(iii)] if $\mu$ satisfies $\Tbar_\theta$, then for all convex $f\colon \RR^n\to \RR$, bounded from below,
\begin{align}\label{eq:inf-convolution}
\int_{\RR^n}  \exp(Q_t f )d\mu \int_{\RR^n}  e^{-f}d\mu \le 1
\end{align}
holds with $t= 2$. Conversely, if  $\mu$ satisfies \eqref{eq:inf-convolution} for some $t > 0$, then it satisfies $\Tbar_{\tilde{\theta}}$ with $\tilde{\theta}(\cdot) = t\theta(\cdot/t)$.
\end{itemize}
Moreover, the inequality \eqref{eq:dual-T+} (resp. \eqref{eq:dual-T-}) for all convex, Lipschitz functions bounded from below is a sufficient condition for $\Tbar^+_\theta$ (resp. $\Tbar^-_\theta$).
\end{lemma}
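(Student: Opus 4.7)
My plan is to derive parts (i) and (ii) from a Kantorovich-type duality for the weak transport cost, namely
\[
\cTbar_\theta(\nu|\mu) \;=\; \sup_{g}\Big(\int Q_1 g\,d\mu - \int g\,d\nu\Big),
\]
where the supremum ranges over convex, lower-semicontinuous $g\colon\RR^n\to\RR$ bounded from below. The easy direction ($\ge$) follows from the pointwise bound $Q_1 g(x)\le g(\int y\,p_x(dy))+\theta(x-\int y\,p_x(dy))$ combined with Jensen's inequality $g(\int y\,p_x(dy))\le\int g(y)\,p_x(dy)$ (valid since $g$ is convex) and integration against $\mu$. The reverse inequality I would establish by a Rockafellar-style minimax argument applied to the linear program defining $\cTbar_\theta$: dualize the marginal constraint on $\nu$ via a multiplier $g$, evaluate the inner infimum over $p_x$ with prescribed mean by Jensen, and finally reparametrize $g\mapsto-g$ to recover convex test functions and the operator $Q_1$.

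Granting the duality, (i) follows by rewriting $\Tbar^+_\theta$ as $\int Q_1 g\,d\mu-\int g\,d\nu\le H(\nu|\mu)$ for every admissible $g$ and every $\nu$, taking the infimum over $\nu$ at fixed $g$, and invoking the Donsker--Varadhan formula $-\log\int e^{-g}\,d\mu=\inf_\nu\bigl(H(\nu|\mu)+\int g\,d\nu\bigr)$; this yields \eqref{eq:dual-T+}. Part (ii) is symmetric: apply the duality with the roles of $\mu,\nu$ interchanged, take the supremum over $\nu$ at fixed $g$, and use $\log\int e^g\,d\mu=\sup_\nu\bigl(\int g\,d\nu-H(\nu|\mu)\bigr)$ to obtain \eqref{eq:dual-T-}. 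The ``moreover'' clause is then handled by approximation: every convex bounded-below function is a monotone limit of convex Lipschitz bounded-below ones (e.g.\ its Moreau--Yosida regularizations), along which both sides of \eqref{eq:dual-T+} and \eqref{eq:dual-T-} behave well under monotone convergence.

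For the forward direction of (iii) I would exploit the semigroup identity $Q_{s+t}f=Q_s(Q_t f)$, which follows from convexity of $\theta$ by optimizing over the intermediate point; with $s=t=1$ this gives $Q_2 f=Q_1(Q_1 f)$. Applying \eqref{eq:dual-T+} to $f$ yields $\int e^{-f}\,d\mu\le\exp(-\int Q_1 f\,d\mu)$, and if \eqref{eq:dual-T-} could be applied with test function $Q_1 f$ we would obtain $\int e^{Q_2 f}\,d\mu\le\exp(\int Q_1 f\,d\mu)$; multiplying the two bounds produces \eqref{eq:inf-convolution} with $t=2$. The main obstacle is precisely that $Q_1 f$ need not be convex, so \eqref{eq:dual-T-} does not apply verbatim; I would handle this either by approximating $Q_1 f$ by convex bounded-below test functions along which the dual inequality persists, or by directly chaining an optimal $\mu\to\nu$ coupling from $\Tbar^+_\theta$ with an optimal $\nu\to\mu$ coupling from $\Tbar^-_\theta$ and using the convexity of $\theta$ to bound the combined cost. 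Conversely, if \eqref{eq:inf-convolution} holds for some $t>0$, then rewriting it as $\int e^{Q_1^{\tilde\theta}f}\,d\mu\cdot\int e^{-f}\,d\mu\le 1$ with $\tilde\theta(x)=t\theta(x/t)$ and applying Jensen's inequality to each factor in turn (using $\exp(\int Q_1^{\tilde\theta}f\,d\mu)\le\int e^{Q_1^{\tilde\theta}f}\,d\mu$ and $\exp(-\int f\,d\mu)\le\int e^{-f}\,d\mu$) splits the single bound into \eqref{eq:dual-T+} and \eqref{eq:dual-T-} for $\tilde\theta$, whence $\Tbar_{\tilde\theta}$ via (i) and (ii).
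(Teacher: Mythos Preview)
The paper does not prove this lemma itself; it is quoted from \cite{gozlan_new} (with the remark that the one-dimensional argument there extends to $\RR^n$). Your outline is in fact the standard strategy used in that reference: dualize the weak transport cost over convex test functions, then combine with the Donsker--Varadhan variational formula for relative entropy. Parts (i), (ii), the ``moreover'' clause, and the converse direction of (iii) are handled correctly.

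Your only misstep is in the forward direction of (iii). You flag as the ``main obstacle'' that $Q_1 f$ need not be convex, and sketch two workarounds. But this obstacle does not exist: if $f$ is convex and bounded from below and $\theta$ is convex with $\theta(0)=0$, then $Q_1 f$ is the infimal convolution of $f$ with $\theta$, hence convex, and it is bounded from below since $\theta\ge 0$ forces $Q_1 f\ge\inf f$. The paper itself uses precisely this observation later, in the proof of Lemma~\ref{lem:logSob-transport}. Thus \eqref{eq:dual-T-} applies directly with test function $Q_1 f$, yielding $\int e^{Q_2 f}\,d\mu\le\exp\bigl(\int Q_1 f\,d\mu\bigr)$ via the semigroup identity, and multiplying this by the bound $\int e^{-f}\,d\mu\le\exp\bigl(-\int Q_1 f\,d\mu\bigr)$ from \eqref{eq:dual-T+} gives \eqref{eq:inf-convolution} at $t=2$ without any detour.
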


The inequality \eqref{eq:inf-convolution} was introduced by Maurey in~\cite{MR1097258} and the relation with transportation cost inequalities was first observed in \cite{MR1682772}.

\section{From convex Poincar\'e to modified log-Sobolev inequalities
for convex and concave functions\label{sec:log-Sobolev}}

In this section we present modified log-Sobolev inequalities for convex and concave functions which are implied by the convex Poincar\'e inequality. Our approach builds heavily on the arguments introduced by Bobkov and Ledoux in \cite{MR1440138} for arbitrary Lipschitz functions, however some non-trivial modifications will be necessary in order to handle the difficulties imposed by the restriction of the Poincar\'e inequality to convex functions.

In what follows for a nonnegative random variable $Y$, we define its entropy as
\begin{displaymath}
\Ent Y = \EE Y\log Y - \EE Y\log(\EE Y)
\end{displaymath}
if $\EE Y\log Y < \infty$ and $\Ent Y = \infty$ otherwise. We refer to e.g \cite{MR1845806,MR1849347} for basic properties of entropy and log-Sobolev inequalities.

Throughout this section we assume that $\mu$ is a probability measure on $\RR^n$ satisfying the convex Poincar\'e inequality \eqref{eq:convex-Poincare} and that $X$ is a random vector with law $\mu$, which will not be explicitly stated in all the theorems.

\subsection{Modified log-Sobolev inequalities for convex functions}

\begin{theorem}\label{thm:BL-convex}
Let $f\colon\RR^n\to\RR$ be convex with  $|\nabla f(x)| \leq c \leq 0.5 \sqrt{\lambda}$ for all $x \in \RR^n$. Then
\begin{equation}\label{eq:Bobkov-Ledoux-inequality-convex}
\Ent( e^{f(X)}) \leq C \EE |\nabla f(X)|^2 e^{f(X)},
\end{equation}
where
\begin{equation*}
C =C(\lambda, c) = \frac{1}{3\lambda} \exp(c\sqrt{2/\lambda}) + \frac{1}{3\bigl(\sqrt{\lambda/2} - c/2\bigr)^2}.
\end{equation*}
\end{theorem}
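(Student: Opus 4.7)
The plan is to adapt the Bobkov--Ledoux derivation of a modified log-Sobolev inequality from a Poincar\'e inequality to the convex setting. The crucial structural feature that makes this possible is that for any convex $f$ and any $t \geq 0$, the function $e^{tf/2}$ is again convex (as the composition of an increasing convex function with a convex function), so the hypothesis~\eqref{eq:convex-Poincare} may still be applied to this family. First, using the Lipschitz bound $|\nabla f| \leq c \leq 0.5\sqrt{\lambda} < 0.52\sqrt{\lambda}$ and the upper-tail estimate of Proposition~\ref{prop:Poincare-upper-tail}, I would conclude that $e^{tf(X)}$ is integrable for all $t$ in a neighbourhood of $[0,1]$, so in particular all Laplace transforms and $\Ent(e^{f(X)})$ are finite. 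The core Poincar\'e estimate would then be obtained by applying~\eqref{eq:convex-Poincare} to the convex function $g = e^{f(X)/2}$, for which $|\nabla g| = \frac{1}{2}g|\nabla f|$:
\begin{equation*}
\EE e^{f(X)} - \bigl(\EE e^{f(X)/2}\bigr)^2 = \Var(g) \leq \frac{1}{4\lambda}\EE|\nabla f(X)|^2 e^{f(X)}.
\end{equation*}

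The main step is to upgrade this variance estimate to an entropy estimate. My plan is to invoke a Rothaus-type decomposition
\begin{equation*}
\Ent(g^2) \leq 2\Var(g) + \Ent\bigl((g-\EE g)^2\bigr),
\end{equation*}
which reduces the problem to controlling $\Ent((g-\EE g)^2)$, and then to split $(g-\EE g)^2 = (g-\EE g)_+^2 + (\EE g - g)_+^2$. The positive part $(g-\EE g)_+ = (e^{f/2}-\EE e^{f/2})_+$ remains convex (as the positive part of a convex function), so one can apply the convex Poincar\'e inequality to appropriate convex functionals of it and iterate in the spirit of Bobkov--Ledoux; the contraction ratio is governed by $c/(2\sqrt{2\lambda})$, which accounts for the shifted Poincar\'e constant $\sqrt{\lambda/2}-c/2$ appearing in the second summand $\frac{1}{3(\sqrt{\lambda/2}-c/2)^2}$ of $C(\lambda,c)$.

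The main obstacle is the negative part $(\EE g - g)_+$, which is in general neither convex nor concave; since the convex Poincar\'e inequality does not apply directly to it, the classical Bobkov--Ledoux path is blocked. To replace the missing functional inequality I would use Lemma~\ref{le:Poincare-median}, which gives the second-moment bound $\bigl(\EE(f(X)-\Med f(X))^2\bigr)^{1/2} \leq c\sqrt{2/\lambda}$, together with the exponential upper-tail concentration of Proposition~\ref{prop:Poincare-upper-tail}, to estimate $\Ent\bigl((\EE g-g)_+^2\bigr)$ by a direct $L^\infty$-type argument. This is the source of the factor $\exp(c\sqrt{2/\lambda})$ in $C_1 = \frac{1}{3\lambda}\exp(c\sqrt{2/\lambda})$: its exponent is precisely the $L^2$-deviation scale of $f(X)$ about its median. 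The hypothesis $c \leq 0.5\sqrt{\lambda}$ ensures that all intermediate Laplace transforms and geometric iterations converge, and that the two contributions combine into the stated constant.
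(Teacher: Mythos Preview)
Your route via the Rothaus lemma and a $\pm$-splitting of $g-\EE g$ is genuinely different from the paper's argument, and the splitting step itself is legitimate (for nonnegative $u,v$ with $uv=0$ one has $\Ent(u+v)\le\Ent(u)+\Ent(v)$). But after that point your plan becomes too vague to carry through. You say you will ``iterate in the spirit of Bobkov--Ledoux'' on $(e^{f/2}-\EE e^{f/2})_+$, yet the Bobkov--Ledoux iteration is an iteration over the family $e^{f/2^k}$; once you subtract a constant and take positive part you no longer have an exponential, so there is nothing obvious to iterate, and the convex Poincar\'e inequality only delivers variances, not entropies. Likewise the ``$L^\infty$-type argument'' for $\Ent\bigl((\EE g-g)_+^2\bigr)$ is not specified: Proposition~\ref{prop:Poincare-upper-tail} controls only the \emph{upper} tail of $f$, which is exactly the region where $(\EE g-g)_+$ vanishes, so it is unclear what input you would use. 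As written, neither of the two entropy terms you produce is actually bounded, and there is no mechanism visible that would yield the precise constants $\frac{1}{3\lambda}e^{c\sqrt{2/\lambda}}$ and $\frac{1}{3}(\sqrt{\lambda/2}-c/2)^{-2}$.

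The paper avoids Rothaus entirely. It centers $f$ at its \emph{median} (not $e^{f/2}$ at its mean) and uses the elementary interpolation
\[
\Ent(e^{f})\le \EE\bigl(fe^{f}-e^{f}+1\bigr)=\int_0^1 t\,F(t)\,dt\le \tfrac16 F(0)+\tfrac13 F(1),\qquad F(t)=\EE f^2 e^{tf},
\]
the last step by convexity of $t\mapsto F(t)$. This reduces the problem to bounding $F(0)=\EE f^2$ and $F(1)=\EE f^2 e^{f}$. For $F(1)$ the key idea (Proposition~\ref{prop:1}) is to apply Lemma~\ref{le:Poincare-median} not to $e^{f/2}$ but to $\Phi\circ f$, where $\Phi(x)=\max\{xe^{x/2},-2/e\}$ is a convex nondecreasing truncation of $xe^{x/2}$; this single application (no iteration) already gives the quadratic inequality $a(\sqrt{\lambda/2}-c/2)\le b$ with $a^2=\EE\Phi(f)^2\ge\EE f^2e^{f}$ and $b^2=\EE|\nabla f|^2e^{f}$, whence the second summand of $C$. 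For $F(0)$ (Proposition~\ref{prop:2}) the factor $e^{c\sqrt{2/\lambda}}$ arises from a short Jensen argument after tilting by the density $|\nabla f|^2/\EE|\nabla f|^2$, not from any tail estimate. Both pieces rely on the median-centred form of the Poincar\'e inequality, which is the structural replacement for the operations (like two-sided truncation) that fail in the convex category.
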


Our constants are slightly worse than in \cite{MR1440138}, basically because we need to work with the median rather than the mean. However the argument (which works also in the classical case) seems to  slightly simplify the technicalities of \cite{MR1440138}. The proof relies on two propositions.

\begin{proposition}\label{prop:1}
Let $f\colon\RR^n\to\RR$ be convex with $\Med f(X) =0$ and $|\nabla f(x)| \leq c \leq 0.5\sqrt{\lambda}$ for all $x \in \RR^n$. Then
\begin{equation*}
\EE f(X)^2e^{f(X)} \leq C_1 \EE |\nabla f(X)|^2 e^{f(X)},
\end{equation*}
where $C_1 = C_1(c, \lambda) =\bigl( \sqrt{\lambda/2} - c/2\bigr)^{-2}$.
\end{proposition}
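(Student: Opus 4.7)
The plan is to apply the median form of the convex Poincar\'e inequality (Lemma~\ref{le:Poincare-median}) to a suitable convex transformation $g=\widetilde{\psi}(f)$ whose square pointwise dominates $f^{2}e^{f}$ and whose gradient can be controlled by $|\nabla f|$ and $f$. The natural candidate $t\mapsto te^{t/2}$ is neither monotone nor globally convex---it attains its minimum $-2e^{-1}$ at $t=-2$---so I would instead work with the modified function
\[
\widetilde{\psi}(t)=\begin{cases} te^{t/2}, & t\geq -2,\\ -2e^{-1}, & t\leq -2,\end{cases}
\]
which is easily checked to be $C^{1}$, convex, and nondecreasing on $\RR$ with $\widetilde{\psi}(0)=0$. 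Setting $g:=\widetilde{\psi}\circ f$, one gets a convex function (convex nondecreasing composed with convex) with $\Med g(X)=0$, and the fact that $h(t)=t^{2}e^{t}$ is nondecreasing on $(-\infty,-2]$ with $h(-2)=4e^{-2}=\widetilde{\psi}(-2)^{2}$ yields the pointwise bound $\widetilde{\psi}(t)^{2}\geq t^{2}e^{t}$ for all $t$, i.e.\ $g^{2}\geq f^{2}e^{f}$.

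Applying Lemma~\ref{le:Poincare-median} to $g$ (all expectations are finite because Proposition~\ref{prop:Poincare-upper-tail} combined with $c\leq 0.5\sqrt{\lambda}$ makes the upper tail of $f$ decay strictly faster than $e^{-f}$), one obtains $\EE f(X)^{2}e^{f(X)}\leq \EE g^{2}\leq \tfrac{2}{\lambda}\EE|\nabla g|^{2}$. The chain rule gives $|\nabla g|=(1+f/2)|\nabla f|e^{f/2}$ on $\{f>-2\}$ and $|\nabla g|\equiv 0$ on $\{f<-2\}$. Writing $(1+f/2)|\nabla f|e^{f/2}=|\nabla f|e^{f/2}+(f/2)|\nabla f|e^{f/2}$, applying the triangle inequality in $L^{2}(\mu)$, and using $|\nabla f|\leq c$ on the second summand, produces
\[
(\EE|\nabla g|^{2})^{1/2}\leq (\EE|\nabla f|^{2}e^{f})^{1/2}+\tfrac{c}{2}(\EE f^{2}e^{f})^{1/2}.
\]
Plugging this into the previous display and solving the resulting linear inequality in $(\EE f^{2}e^{f})^{1/2}$---the coefficient $1-c/\sqrt{2\lambda}$ is strictly positive because $c\leq 0.5\sqrt{\lambda}<\sqrt{2\lambda}$---yields exactly $C_{1}=(\sqrt{\lambda/2}-c/2)^{-2}$ after simplification.

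The main obstacle I anticipate is the design of $\widetilde{\psi}$. The function $te^{t/2}$, whose square is the target $t^{2}e^{t}$, is itself not convex on all of $\RR$, so a na\"ive composition with $f$ destroys convexity of $g$ and makes the convex Poincar\'e inequality unavailable. The flat extension at $t=-2$ solves the problem because it preserves both convexity and monotonicity while only enlarging $\widetilde{\psi}^{2}$ on $(-\infty,-2)$, which is harmless since $t^{2}e^{t}$ decreases to $0$ as $t\to-\infty$. Without this fix the alternative choice $g=(te^{t/2})_{+}\circ f$ would yield only the weaker one-sided inequality $\EE f_{+}^{2}e^{f}\leq C_{1}\EE|\nabla f|^{2}e^{f}$, missing the contribution from $\{f<0\}$.
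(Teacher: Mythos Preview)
Your proof is correct and essentially identical to the paper's: you both introduce the same auxiliary function $\widetilde{\psi}=\Phi$ (the flat extension of $te^{t/2}$ below $t=-2$), apply Lemma~\ref{le:Poincare-median} to $g=\widetilde{\psi}\circ f$, and solve the resulting quadratic inequality for $(\EE f^{2}e^{f})^{1/2}$ to obtain the constant $C_{1}=(\sqrt{\lambda/2}-c/2)^{-2}$. The only cosmetic difference is that the paper expands $(1+f/2)^{2}$ and uses Cauchy--Schwarz on the cross term, whereas you use the triangle inequality in $L^{2}$ directly; both routes yield the same bound.
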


\begin{proof}
For $x\in\RR$ we define $\Psi(x) = xe^{x/2}$ and
\begin{equation*}
\Phi(x) = \begin{cases}
xe^{x/2} & \text{for } x \geq -2,\\
-2/e & \text{for } x <-2.
\end{cases}
\end{equation*}
One easily checks that $|\Psi(x)| \leq |\Phi(x)|$, $|\Phi'(x)|\leq |\Psi'(x)|$, and $\Phi$ is convex nondecreasing.

Denote $a^2 = \EE  |\Phi(f(X))|^2 $ and $b^2 = \EE |\nabla f(X)|^2 e^{f(X)}$ (where $a,b\geq 0$). The function $\Phi(f)$ is convex, moreover  $\Med \Phi(f(X)) = 0$. Hence, by Lemma \ref{le:Poincare-median},
\begin{align*}
a^2 &\leq \frac{2}{\lambda} \EE |\nabla f(X)|^2 (1+f(X)/2)^2 e^{f(X)} \indbr{f(X)\geq -2}\\
&\leq  \frac{2}{\lambda}\Bigl(b^2 + c\EE|\nabla f(X)|e^{f(X)/2}\cdot |f(X)| e^{f(X)/2} + \frac{c^2}{4}   \EE f(X)^2  e^{f(X)} \Bigr)\\
&\leq  \frac{2}{\lambda}\Bigl(b^2 + c b \sqrt{\EE f(X)^2 e^{f(X)}} + \frac{c^2}{4} a^2 \Bigr) \\
& \leq \frac{2}{\lambda} \bigl(b + ca/2 \bigr)^2.
\end{align*}
Note that $a<\infty$ (by Proposition~\ref{prop:Poincare-upper-tail} and since $c\leq 0.5\sqrt{\lambda}$). Thus $a(\sqrt{\lambda/2} - c/2)\leq b$ and the assertion follows.
\end{proof}

\begin{proposition}\label{prop:2}
Let $f\colon \RR^n\to\RR$ be either convex or concave, with $\Med f(X) =0$ and $|\nabla f(x)| \leq c $ for all $x\in\RR^n$. Then
\begin{equation*}
\EE |\nabla f(X)|^2 \leq C_2 \EE |\nabla f(X)|^2 e^{f(X)},
\end{equation*}
where $C_2 = C_2(c, \lambda) = \exp({c\sqrt{2/\lambda}})$. Consequently,
\begin{equation*}
\EE f(X)^2 \leq  \frac{2}{\lambda} C_2 \EE |\nabla f(X)|^2 e^{f(X)}.
\end{equation*}
\end{proposition}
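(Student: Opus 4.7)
The plan is to combine Jensen's inequality with the sharp form of the Poincar\'e--median bound (Lemma~\ref{le:Poincare-median}), which applies equally to the convex and concave case since $|\nabla(-f)| = |\nabla f|$ and $\Med(-f) = -\Med f$. First, applying Lemma~\ref{le:Poincare-median} to $f$ (convex case) or to $-f$ (concave case), together with the hypothesis $\Med f(X) = 0$, gives
\begin{equation*}
\EE f(X)^2 \le \frac{2}{\lambda} \EE |\nabla f(X)|^2.
\end{equation*}
Note that $f(X) \in L^2$ is guaranteed by the Lipschitz assumption on $f$ and the exponential integrability of $|X|$ established in the first part of the proof of Proposition~\ref{prop:Poincare-upper-tail}.

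Next, assuming $\EE |\nabla f(X)|^2 > 0$ (otherwise both sides of the first claim vanish), I would introduce the probability measure $\tilde\mu$ whose Radon--Nikodym density with respect to $\mu$ is $|\nabla f|^2 / \EE |\nabla f(X)|^2$. Jensen's inequality applied to the convex function $e^{x}$ under $\tilde\mu$ yields
\begin{equation*}
\frac{\EE |\nabla f(X)|^2 e^{f(X)}}{\EE |\nabla f(X)|^2} \;\ge\; \exp\!\left(\frac{\EE |\nabla f(X)|^2 f(X)}{\EE |\nabla f(X)|^2}\right),
\end{equation*}
so it remains to lower-bound the ratio in the exponent. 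By Cauchy--Schwarz, the pointwise estimate $|\nabla f|^2 \le c^2$, and the Poincar\'e--median bound above,
\begin{equation*}
\bigl(\EE |\nabla f|^2 f\bigr)^2 \;\le\; \EE |\nabla f|^4 \cdot \EE f^2 \;\le\; c^2 \EE |\nabla f|^2 \cdot \frac{2}{\lambda} \EE |\nabla f|^2 \;=\; \frac{2c^2}{\lambda}\bigl(\EE |\nabla f|^2\bigr)^2,
\end{equation*}
which gives $\EE |\nabla f|^2 f \ge -c\sqrt{2/\lambda}\,\EE |\nabla f|^2$. Substituting into the Jensen bound and rearranging produces
\begin{equation*}
\EE |\nabla f(X)|^2 \;\le\; e^{c\sqrt{2/\lambda}} \EE |\nabla f(X)|^2 e^{f(X)} \;=\; C_2 \EE |\nabla f(X)|^2 e^{f(X)},
\end{equation*}
and the ``consequently'' part follows by chaining with $\EE f^2 \le \frac{2}{\lambda}\EE |\nabla f|^2$.

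The one non-obvious point---and the place where a less careful argument fails---is that one must keep the sharp Poincar\'e--median estimate $\EE f^2 \le \frac{2}{\lambda}\EE |\nabla f|^2$ rather than invoking $|\nabla f| \le c$ prematurely to get the weaker $\EE f^2 \le 2c^2/\lambda$. The extra factor $\EE|\nabla f|^2/c^2$ is exactly what makes the Cauchy--Schwarz step produce the clean bound $c\sqrt{2/\lambda}$ uniformly in $f$; otherwise one would end up with a ratio of the form $c^2\sqrt{2/\lambda}/\sqrt{\EE|\nabla f|^2}$, which is useless in the regime where $|\nabla f|$ is typically much smaller than its pointwise supremum $c$.
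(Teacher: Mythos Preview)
Your proof is correct and follows essentially the same approach as the paper: introduce the tilted measure with density $|\nabla f|^2/\EE|\nabla f|^2$, apply Jensen's inequality, and control the exponent via Cauchy--Schwarz combined with the Poincar\'e--median bound $\EE f^2 \le \frac{2}{\lambda}\EE|\nabla f|^2$. The only cosmetic difference is that the paper applies Jensen to $e^{-|f|}$ and then uses $-|f|\le f$, whereas you apply Jensen directly to $e^{f}$; your route is slightly more streamlined but otherwise identical in spirit and yields the same constant.
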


\begin{proof}  If $|\nabla f(X)|$ vanishes with probability one, there is nothing to prove. Otherwise, denote by $\widetilde{\EE}$ the expectation with respect to the probability measure with density $|\nabla f(X)|^2/\EE|\nabla f(X)|^2$ relative to $\PP$. By Jensen's inequality,
\begin{equation*}
\EE |\nabla f(X)|^2 e^{-|f(X)|} = \EE|\nabla f(X)|^2 \widetilde{\EE} e^{-|f(X)|} \geq \EE|\nabla f(X)|^2  e^{-\widetilde{\EE}|f(X)|}.
\end{equation*}
Thus, using the trivial inequality $-|f|\leq f$, we conclude that
\begin{equation*}
\EE |\nabla f(X)|^2 \leq e^{\widetilde{\EE}|f(X)|}  \EE |\nabla f(X)|^2 e^{f(X)}.
\end{equation*}
But since
\begin{align*}
\EE |\nabla f(X)|^2 |f(X)| &\leq c \EE |\nabla f(X)| |f(X)| \leq c \sqrt{\EE |\nabla f(X)|^2} \sqrt{\EE f(X)^2 } \\
&\leq c\sqrt{2/\lambda}\EE |\nabla f(X)|^2,
\end{align*}
we can bound $\widetilde{\EE}|f(X)|$ by $c\sqrt{2/\lambda}$. This yields the assertion of the proposition.
\end{proof}

\begin{proof}[Proof of Theorem~\ref{thm:BL-convex}]
Without loss of generality assume $\Med f(X) = 0$. Denote $F(t) = \EE f(X)^2 e^{tf(X)}$, $t\in[0,1]$.
By the formula $\int_0^1 ta^2 e^{ta} dt = ae^a -e^a +1$ and  the convexity of $t\mapsto F(t)$,
\begin{align*}
\Ent (e^{f(X)})
&\leq \EE (f(X)e^{f(X)} - e^{f(X)} + 1) = \EE \int_0^1  tf(X)^2e^{tf(X)} dt = \int_0^1 t F(t) dt \\
& \leq \int_0^1 t(1-t) F(0) + t^2 F(1) dt = \frac{1}{6} F(0) + \frac{1}{3} F(1)
\end{align*}
(note that for this argument to work we do \emph{not} need the expectation of $f(X)$ to vanish).
Thus Propositions~\ref{prop:1} and~\ref{prop:2} imply the assertion of the theorem.
\end{proof}

\subsection{Modified log-Sobolev inequalities for concave functions}

\begin{theorem}\label{thm:BL-concave-n}
Let $f\colon \RR^n \to \RR$ be convex with  $|\nabla f(x)| \leq c < \sqrt{\lambda}/64$ for all $x \in \RR^n$. Assume that $M\in \RR_+$ satisfies $\PP(|X-\EE X|\le M)\ge 3/4$. Then
\begin{equation*}
\Ent (e^{-f(X)}) \leq C \EE |\nabla f(X)|^2 e^{-f(X)},
\end{equation*}
where $C = C(\lambda, c, M)$ is a constant depending only on $\lambda,c,M$.
\end{theorem}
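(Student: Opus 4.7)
The plan is to follow the structure of the proof of Theorem~\ref{thm:BL-convex} with Proposition~\ref{prop:1} replaced by a concave analog obtained from the lower tail bound Lemma~\ref{le:conc-Med-concave-n} (rather than from the convex Poincar\'e inequality applied to a transformed function, which fails here since $-f$ is concave). Without loss of generality I would assume $\Med f(X) = 0$.

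First I would repeat the opening of the proof of Theorem~\ref{thm:BL-convex} verbatim: combine the inequality $\Ent Y \le \EE(Y\log Y - Y + 1)$ applied to $Y = e^{-f(X)}$, the identity $he^h - e^h + 1 = \int_0^1 t h^2 e^{th}\,dt$ (with $h=-f$), and the convexity of $t\mapsto F(t) := \EE f(X)^2 e^{-tf(X)}$ on $[0,1]$. This yields
\begin{equation*}
\Ent(e^{-f(X)}) \le \int_0^1 t F(t)\,dt \le \frac{1}{6}\,\EE f(X)^2 + \frac{1}{3}\,\EE f(X)^2 e^{-f(X)}.
\end{equation*}
Proposition~\ref{prop:2} applied to the \emph{concave} function $-f$ (whose median is $0$) handles the first summand: $\EE f(X)^2 \le (2C_2/\lambda)\,\EE|\nabla f(X)|^2 e^{-f(X)}$ with $C_2 = e^{c\sqrt{2/\lambda}}$.

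For the second summand I would split $\EE f^2 e^{-f}$ according to whether $f(X) \ge -t_0$ or $f(X) < -t_0$, where $t_0 := 32 M\,\EE|\nabla f(X)|$. On $\{f\ge -t_0\}$ the bound $e^{-f}\le e^{t_0}\le e^{32Mc}$ yields $\EE f^2 e^{-f}\indbr{f\ge -t_0}\le e^{32Mc}\,\EE f^2$, which is again controlled by Proposition~\ref{prop:2}. On $\{f<-t_0\}$, Lemma~\ref{le:conc-Med-concave-n} gives $\PP(f(X)\le -s)\le 8e^{-\alpha s}$ for $s\ge t_0$ with $\alpha := \sqrt{\lambda}/(32\EE|\nabla f|)\ge \sqrt{\lambda}/(32c)>2$; the hypothesis $c<\sqrt{\lambda}/64$ is used precisely to ensure $\alpha>2$, so that $s^2 e^s$ is integrable against this tail. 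Integrating by parts,
\begin{equation*}
\EE f^2 e^{-f}\indbr{f<-t_0} \le t_0^2 e^{t_0}\PP(f(X)\le -t_0) + \int_{t_0}^{\infty}(2s+s^2)e^s\,\PP(f(X)\le -s)\,ds,
\end{equation*}
and using $\alpha t_0 = M\sqrt{\lambda}$ together with the elementary identities $\int_0^\infty s^n e^{-(\alpha-1)s}\,ds = n!/(\alpha-1)^{n+1}$, each piece is at most a constant $K(\lambda,c,M)$ times $(\EE|\nabla f|)^2$.

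Finally I would invoke Jensen's inequality $(\EE|\nabla f|)^2 \le \EE|\nabla f|^2$ and Proposition~\ref{prop:2} once more (applied to $-f$) to get $\EE|\nabla f|^2 \le C_2\,\EE|\nabla f|^2 e^{-f}$, which closes the estimate with a constant $C = C(\lambda,c,M)$ of the required form. The main obstacle, and the reason the Lipschitz assumption is strengthened from $c\le 0.5\sqrt{\lambda}$ to $c<\sqrt{\lambda}/64$ while the constant acquires a dependence on $M$, lies exactly in this last step: one must extract a \emph{multiplicative} factor of $(\EE|\nabla f|)^2$ from the tail integral over $\{f<-t_0\}$, not merely an absolute bound, and this is only possible after choosing $t_0$ proportional to $\EE|\nabla f|$ (rather than to $c$) so that the exponential decay produced by Lemma~\ref{le:conc-Med-concave-n} strictly dominates the integrand $s^2 e^s$.
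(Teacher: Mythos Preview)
Your proof is correct and follows the same overall architecture as the paper's: the reduction of $\Ent(e^{-f})$ to $\frac{1}{6}F(0)+\frac{1}{3}F(1)$, the use of Proposition~\ref{prop:2} for $F(0)$, and the reliance on Lemma~\ref{le:conc-Med-concave-n} (together with the hypothesis $c<\sqrt{\lambda}/64$) to control the contribution of the lower tail of $f$. The only difference is in the treatment of $F(1)=\EE f^2 e^{-f}$. The paper first writes $F(1)\le F(0)+\EE g_+^2 e^{g_+}$ (with $g=-f$) and then estimates the second term via Cauchy--Schwarz, $\EE g_+^2 e^{g_+}\le (\EE g_+^4)^{1/2}(\EE e^{2g_+})^{1/2}$, bounding $\EE e^{2g_+}$ by a constant $D_1(\lambda,c,M)$ and $\EE g_+^4$ by $D_2(\lambda,M)(\EE|\nabla f|)^4$ through two separate applications of Lemma~\ref{le:conc-Med-concave-n}. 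You instead split $F(1)$ at the threshold $t_0=32M\,\EE|\nabla f|$ and integrate the tail on $\{f<-t_0\}$ directly against $(2s+s^2)e^s$. Your route is a bit more elementary (no Cauchy--Schwarz detour) and makes the emergence of the factor $(\EE|\nabla f|)^2$ from the tail integral explicit; the paper's decoupling has the minor advantage of cleanly separating an absolute constant ($D_1$) from a purely gradient-scaled factor ($D_2$). Either way the argument closes with Proposition~\ref{prop:2} exactly as you indicate.
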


\begin{remark}
If we denote by $X_1,\ldots,X_n$ the coordinates of $X$, then by the Poincar\'e inequality we have
\begin{displaymath}
\EE |X - \EE X|^2 = \sum_{i=1}^n \EE |X_i - \EE X_i|^2 \le \frac{n}{\lambda},
\end{displaymath}
and hence, by the Chebyshev inequality, $M = 2\sqrt{n/\lambda}$ satisfies $\PP(|X-\EE X|\le M)\ge 3/4$. Thus in fixed dimension $n$ and for say $c = \sqrt{\lambda}/128$, the constant $C$ in Theorem~\ref{thm:BL-concave-n} can be bounded uniformly over all probability measures satisfying the convex Poincar\'e inequality with constant $\lambda$.
\end{remark}

\begin{proof}[Proof of Theorem~\ref{thm:BL-concave-n}]
We start as in the proof of Theorem~\ref{thm:BL-convex}. Denote $g=-f$ (this is a \emph{concave} function). Without loss of generality assume $\Med g(X) = 0$. Denote $F(t) = \EE g(X)^2 e^{tg(X)}$, $t\in[0,1]$.
By the convexity of $t\mapsto F(t)$,
\begin{align}
\Ent (e^{g(X)})
&\leq \EE (g(X)e^{g(X)} - e^{g(X)} + 1) = \EE \int_0^1  tg(X)^2e^{tg(X)} dt = \int_0^1 t F(t) dt \nonumber\\
& \leq \int_0^1 t(1-t) F(0) + t^2 F(1) dt = \frac{1}{6} F(0) + \frac{1}{3} F(1).\label{eq:FF1-concave}
\end{align}
We have
\begin{equation}\label{eq:F1-concave}
F(1) \leq \EE g(X)^2 + \EE g_+(X)^2 e^{g_+(X)} = F(0) + \EE g_+(X)^2 e^{g_+(X)}
\end{equation}
By Proposition~\ref{prop:2}, $F(0) \leq \frac{2}{\lambda} \exp(c\sqrt{2/\lambda}) \EE |\nabla g(X)|^2 e^{g(X)}$, so it remains to estimate $\EE g_+(X)^2 e^{g_+(X)}$.

Integration by parts and Lemma~\ref{le:conc-Med-concave-n} yield
\begin{align*}
\EE e^{2g_+(X)}
&= 1+ \int_0^{\infty} 2e^{2t} \PP(g_+(X) \geq t) dt \\
&= 1+  \int_0^{32Mc} 2e^{2t} dt + \int_{32Mc}^{\infty} 2e^{2t} \PP(g_+(X) \geq t) dt \\
& \leq e^{64Mc} + \int_{32Mc}^{\infty}  16e^{2t - t\sqrt{\lambda}/(32c)} dt < D_1 = D_1(\lambda, c,M) <\infty,
\end{align*}
if only $c < \sqrt{\lambda}/64$. Similarly (using  Lemma~\ref{le:conc-Med-concave-n} in its full strength),
\begin{align*}
\EE g_+(X)^4
&= \int_0^{\infty} 4t^3 \PP(g_+(X) \geq t) dt \\
&=\int_0^{32M\EE|\nabla f(X)|} 4t^3 dt + \int_{32M\EE|\nabla f(X)|}^{\infty} 4t^3\PP(g_+(X) \geq t) dt \\
& \leq (32M\EE|\nabla f(X)|)^4 + 4 \int_{32M\EE|\nabla f(X)|}^{\infty}  t^3e^{- t\sqrt{\lambda}/(32\EE|\nabla f(X)|)} dt\\
&\leq D_2 (\EE|\nabla f(X)|)^4 \leq  D_2 (\EE |\nabla f(X)|^2) ^2
\end{align*}
 for some $D_2 = D_2(\lambda, M)$.  Thus,  by Proposition~\ref{prop:2},
 \begin{align*}
 \EE g_+(X)^2 e^{g_+(X)} &\leq \sqrt{\EE g_+(X)^4} \sqrt{\EE e^{2g_+(X)}} \leq \sqrt{D_1D_2} \EE|\nabla f(X)|^2 \\
 &\leq \sqrt{D_1D_2} e^{c\sqrt{2/\lambda}}\EE|\nabla f(X)|^2 e^{f(X)}.
 \end{align*}
This, together with~\eqref{eq:FF1-concave} and~\eqref{eq:F1-concave}, ends the proof:
\begin{align*}
\Ent (e^{-f(X)})
& \leq \frac{1}{6} F(0) + \frac{1}{3} F(1) \leq \frac{1}{2} F(0) + \frac{1}{3}  \EE g_+(X)^2 e^{g_+(X)}\\
& \leq \bigl(\frac{1}{\lambda} + \frac{1}{3}\sqrt{D_1 D_2} \bigr) e^{c\sqrt{2/\lambda}}\EE |\nabla f(X)|^2 e^{-f(X)}.\qedhere
\end{align*}
\end{proof}

\section{Proof of the main result}\label{sec:main-proof}

We will now present the proof of Theorem~\ref{thm:main}. As already mentioned, the implication (ii) $\implies$ (i) is standard, we provide a sketch of its proof just for the sake of completeness. The proof of the implication (i) $\implies$ (ii) follows the arguments introduced first in~\cite{MR1846020} and based on the analysis of the Hamilton-Jacobi equations. A~crucial element of the proof will be the modified log-Sobolev inequalities obtained in Section~\ref{sec:log-Sobolev}.

\begin{lemma}\label{lem:logSob-transport}
Let $X$ be a random vector in $\RR^n$. Assume that there exist $C<\infty$ and $L>0$ such that
\begin{align}\label{eq:exponential-integrability}
\EE e^{L|X|} < \infty
\end{align}
and the inequality
\begin{equation}
\label{eq:logSob-transport}
\Ent(e^{f(X)}) \leq C \EE |\nabla f(X)|^2 e^{f(X)}
\end{equation}
holds for every convex (respectively: concave) $L$-Lipschitz function $f\colon\RR^n\to \RR$. Then, for every convex Lipschitz function $f\colon\RR^n\to\RR$ bounded from below,
\begin{align*}
\EE e^{Q_1^{\alpha}f(X)} e^{-\EE f(X)} &\leq 1\\
\big( \text{respectively: } \quad
e^{ \EE Q_1^{\alpha} f(X) } \EE e^{- f(X)} &\leq 1\big),
\end{align*}
where $Q_t^{\alpha} f(x) = \inf_{y\in\RR^n} \{ f(x-y) + t\alpha(|y|/t) \}$, $t>0$,
 is the infimum convolution operator with the cost function
\begin{equation}\label{eq:cost-alpha}
\alpha(s) =\begin{cases}
\frac{s^2}{4C} & \text{for } |s| \le 2CL,\\
L|s| - L^2C  & \text{for } |s| > 2CL.
\end{cases}
\end{equation}
\end{lemma}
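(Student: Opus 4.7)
My plan is to follow the Hamilton--Jacobi semigroup approach of Bobkov--Gentil--Ledoux \cite{MR1846020}, adapted to the one-sided convex/concave setting. A direct calculation from \eqref{eq:cost-alpha} gives that the Legendre transform of $\tilde{\alpha}(y) := \alpha(|y|)$ is
\begin{equation*}
\tilde{\alpha}^{*}(p) = C|p|^{2} \text{ for } |p| \le L, \qquad \tilde{\alpha}^{*}(p) = +\infty \text{ otherwise}.
\end{equation*}
By Hopf--Lax theory (to be invoked from the Appendix), $u(t,x) := Q_{t}^{\alpha}f(x)$ satisfies $\partial_{t} u + C|\nabla u|^{2} = 0$ a.e., is globally $L$-Lipschitz for every $t>0$ (regardless of the Lipschitz constant of $f$, because $\alpha$ itself is $L$-Lipschitz), and preserves convexity in $x$ (since $(x,y)\mapsto f(y)+t\alpha(|x-y|/t)$ is jointly convex when $f$ is).

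For the convex case I introduce
\begin{equation*}
\Lambda(t) := \tfrac{1}{t}\log \EE e^{tQ_{t}^{\alpha}f(X)}, \quad t\in(0,1],\qquad \Lambda(0):=\EE f(X),
\end{equation*}
and set $U(t,x):=tQ_{t}^{\alpha}f(x)$, so that $\partial_{t}U = U/t - C|\nabla U|^{2}/t$. A routine differentiation then yields
\begin{equation*}
t^{2}\Lambda'(t) \; = \; \frac{1}{\EE e^{U}}\,\Bigl(\Ent(e^{U(t,X)}) - C\,\EE\bigl[|\nabla U(t,X)|^{2}e^{U(t,X)}\bigr]\Bigr).
\end{equation*}
Since $U(t,\cdot)$ is convex and $tL$-Lipschitz, hence $L$-Lipschitz on $t\in(0,1]$, the convex log-Sobolev hypothesis \eqref{eq:logSob-transport} gives $\Lambda'(t)\le 0$, so $\Lambda(1)\le\Lambda(0)$, which is exactly $\EE e^{Q_{1}^{\alpha}f(X)}\le e^{\EE f(X)}$.

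For the concave case I use a time-reversal: $v(t,x):=Q_{1-t}^{\alpha}f(x)$ solves the reverse equation $\partial_{t}v = C|\nabla v|^{2}$ with $v(0,\cdot)=Q_{1}^{\alpha}f$ and $v(1,\cdot)=f$. Define
\begin{equation*}
\Psi(t) := -\tfrac{1}{t}\log \EE e^{-tv(t,X)},\quad t\in(0,1],\qquad \Psi(0):=\EE Q_{1}^{\alpha}f(X).
\end{equation*}
The analogous computation with $V(t,x):=-tv(t,x)$, which satisfies $\partial_{t}V = V/t - C|\nabla V|^{2}/t$, produces
\begin{equation*}
t^{2}\Psi'(t) \; = \; \frac{1}{\EE e^{V}}\,\Bigl(C\,\EE\bigl[|\nabla V(t,X)|^{2}e^{V(t,X)}\bigr] - \Ent(e^{V(t,X)})\Bigr).
\end{equation*}
The key observation is that $V(t,\cdot) = -tQ_{1-t}^{\alpha}f$ is \emph{concave} (because $Q_{1-t}^{\alpha}f$ is convex) and $tL$-Lipschitz for $t\in(0,1]$, so the concave log-Sobolev hypothesis gives $\Psi'(t)\ge 0$. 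Hence $\Psi(1)\ge\Psi(0)$, i.e.\ $-\log\EE e^{-f(X)}\ge \EE Q_{1}^{\alpha}f(X)$, which is the desired inequality $e^{\EE Q_{1}^{\alpha}f(X)}\EE e^{-f(X)}\le 1$.

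The main technical hurdle is justifying the formal manipulations above: (i) differentiation under the expectation, which relies on the uniform $L$-Lipschitz bound for $Q_{s}^{\alpha}f$ together with \eqref{eq:exponential-integrability} to produce an integrable envelope for $e^{\pm t Q_{s}^{\alpha}f(X)}$ in a neighbourhood of each $t_{0}\in(0,1]$; (ii) using $\partial_{t}u+C|\nabla u|^{2}=0$ pointwise, which is only valid a.e., and requires the Rademacher/Hopf--Lax facts collected in the Appendix; and (iii) continuity of $\Lambda$ and $\Psi$ at $t=0$ and $t=1$, which follows from dominated convergence once one knows that $Q_{s}^{\alpha}f(0)$ stays bounded as $s\to 0^{+}$ (this uses that $f$ is bounded below) and $Q_{s}^{\alpha}f\to f$ pointwise (this uses that $f$ is continuous). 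Once these standard checks are in place, the two one-line differential inequalities displayed above close the proof.
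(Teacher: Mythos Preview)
Your convex-case argument is essentially the paper's: both define $\Lambda(t)=\tfrac{1}{t}\log\EE e^{tQ_t^{\alpha}f(X)}$, differentiate, and close with the Hamilton--Jacobi equation plus the log-Sobolev hypothesis applied to the convex $L$-Lipschitz function $tQ_t^{\alpha}f$.

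For the concave case your route genuinely differs. You run time backwards from $Q_1^{\alpha}f$ to $f$ and use that $V(t,\cdot)=-tQ_{1-t}^{\alpha}f$ is concave and $tL$-Lipschitz, applying the concave log-Sobolev inequality directly along the flow. The paper instead sets $g=-Q_1^{\alpha}f$ (concave, $L$-Lipschitz), reruns the \emph{forward} Herbst calculation verbatim to obtain $\EE e^{Q_1^{\alpha}g(X)}\le e^{\EE g(X)}$, and finishes with the pointwise inequality $-f\le Q_1^{\alpha}(-Q_1^{\alpha}f)$. Your version avoids this double-$Q$ trick and is arguably cleaner; the paper's has the advantage that the concave hypothesis is invoked only for the single function $-Q_1^{\alpha}f$, and the Hamilton--Jacobi machinery is used once rather than twice.

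There is, however, a genuine gap in your hurdle (ii). The Hopf--Lax facts in the Appendix give $\partial_t u+C|\nabla_x u|^2=0$ only at points of full differentiability of $u$, hence Lebesgue-a.e.\ in $(t,x)$. To plug this into $\tfrac{d}{dt}\EE[\cdots]$ you need the equation to hold, for Lebesgue-a.e.\ $t$, at $\mu$-a.e.\ $x$, where $\mu$ is the law of $X$; Fubini only gives this for Lebesgue-a.e.\ $x$, which is useless when $\mu$ is singular (e.g.\ a point mass). The paper handles this not by sharper Rademacher-type input but by a perturbation: it replaces $X$ by $X_\varepsilon=X+\varepsilon G$ with $G$ an independent small Gaussian, shows via tensorization of entropy that the modified log-Sobolev inequality passes to $X_\varepsilon$ with constant $C(1+\varepsilon^2)$, runs the Herbst argument for $X_\varepsilon$ (whose law \emph{is} absolutely continuous), and then sends $\varepsilon\to 0$. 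Without this regularization step---or an equivalent device---your differentiation of $\Lambda$ and $\Psi$ is not justified and the proof is incomplete as stated.
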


\begin{remark}
The condition \eqref{eq:exponential-integrability} is introduced to exclude heavy-tailed measures for which the only exponentially integrable convex functions are constants. Note that in this case the inequality \eqref{eq:logSob-transport} is trivially satisfied, while the transportation inequality cannot hold (as it implies the existence of exponential moments).
\end{remark}

If we  recall the dual formulations of the weak transport-entropy inequalities $\Tbar^-$ and $\Tbar^+$ (see Lemma~\ref{le:inf-convolution}),  the definition of $\theta_{C,D}$ from~\eqref{eq:cost-function}, and the results of the preceding section (namely, Theorems~\ref{thm:BL-convex} and~\ref{thm:BL-concave-n}), we immediately obtain the following corollaries.

\begin{corollary}\label{cor:Poincare-transport-minus}
Let $X$ be a random vector in $\RR^n$ satisfying the convex Poincar\'e inequality \eqref{eq:convex-Poincare}. Then, for any $c\leq 0.5\sqrt{\lambda}$, the law of $X$ satisfies the inequality $\Tbar^-_{\theta_{2C,c}}$ with \begin{equation*}
C =C(\lambda, c) = \frac{1}{3\lambda} \exp(c\sqrt{2/\lambda}) + \frac{1}{3\bigl(\sqrt{\lambda/2} - c/2\bigr)^2}.
\end{equation*}
\end{corollary}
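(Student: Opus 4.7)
The plan is to chain together the three ingredients just assembled: the modified log-Sobolev inequality of Theorem~\ref{thm:BL-convex}, the log-Sobolev-to-inf-convolution transfer of Lemma~\ref{lem:logSob-transport} (convex case), and the dual characterization in Lemma~\ref{le:inf-convolution}(ii). Since this is the direct corollary alluded to in the preceding paragraph, no new estimates are required; what one has to check is only that the hypotheses of Lemma~\ref{lem:logSob-transport} are met and that the resulting cost function coincides with $\theta_{2C,c}$.

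I would fix $c\le 0.5\sqrt{\lambda}$ and observe that for a convex function the pointwise bound $|\nabla f|\le c$ is equivalent to $f$ being $c$-Lipschitz. Hence Theorem~\ref{thm:BL-convex} yields
\[
\Ent(e^{f(X)}) \le C \, \EE |\nabla f(X)|^2 e^{f(X)}
\]
for every convex $c$-Lipschitz $f\colon\RR^n\to\RR$, with exactly the constant $C = C(\lambda,c)$ appearing in the statement. This is hypothesis \eqref{eq:logSob-transport} of Lemma~\ref{lem:logSob-transport} with $L=c$. The exponential integrability hypothesis \eqref{eq:exponential-integrability} is obtained by applying Proposition~\ref{prop:Poincare-upper-tail} to the convex $1$-Lipschitz function $x\mapsto |x|$: one gets $\PP(|X|\ge \Med|X|+t) \le 8 e^{-0.52\sqrt{\lambda}\, t}$, and therefore $\EE e^{L|X|}<\infty$ for every $L<0.52\sqrt\lambda$, in particular for our $L=c\le 0.5\sqrt\lambda$.

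Lemma~\ref{lem:logSob-transport} then delivers, for every convex Lipschitz function $f$ bounded from below,
\[
\EE e^{Q_1^{\alpha} f(X)} \, e^{-\EE f(X)} \le 1,
\]
with cost function $\alpha$ given by \eqref{eq:cost-alpha} for $L=c$. A direct comparison with the definition \eqref{eq:cost-function} of $\theta_{C',D}$ shows $\alpha = \theta_{2C,c}$: on $|s|\le 2Cc$ one has $\alpha(s)=s^2/(4C)=\theta_{2C,c}(s)$, and on $|s|>2Cc$ one has $\alpha(s)=c|s|-Cc^2=\theta_{2C,c}(s)$. Invoking the ``moreover'' clause of Lemma~\ref{le:inf-convolution}, the validity of this infimum-convolution inequality on the class of convex Lipschitz functions bounded from below is sufficient for $\Tbar^-_{\theta_{2C,c}}$, which completes the argument.

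Since every step is a direct invocation of a previously established result, there is no genuine obstacle; the only place where one must be careful is the bookkeeping that matches the parameters $(C,L)$ in \eqref{eq:cost-alpha} with the parameters $(C',D)$ in \eqref{eq:cost-function}, and the mild check that $c\le 0.5\sqrt{\lambda}$ simultaneously feeds the gradient bound in Theorem~\ref{thm:BL-convex} and the exponential-integrability threshold coming from Proposition~\ref{prop:Poincare-upper-tail}.
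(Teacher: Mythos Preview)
Your proposal is correct and follows precisely the route indicated in the paper: combine Theorem~\ref{thm:BL-convex} with Lemma~\ref{lem:logSob-transport} (convex case) and the dual characterization in Lemma~\ref{le:inf-convolution}(ii), checking along the way that $\alpha=\theta_{2C,c}$ and that the exponential integrability hypothesis is supplied by Proposition~\ref{prop:Poincare-upper-tail}. The paper merely asserts that the corollary is immediate from these ingredients, so your write-up is in fact a faithful expansion of what the authors leave implicit.
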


\begin{corollary}\label{cor:Poincare-transport-plus}
Let $X$ be a random vector in $\RR^n$ satisfying the convex Poincar\'e inequality \eqref{eq:convex-Poincare} and let $M$ be any number such that $\PP(|X-\EE X|\leq M)\geq 3/4$. Then, for any $c < \sqrt{\lambda}/64$, the law of $X$ satisfies the inequality $\Tbar^+_{\theta_{2C,c}}$  for some constant $C =C(\lambda, c, M) $ depending only on $\lambda$, $c$, and $M$.
\end{corollary}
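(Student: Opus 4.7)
The plan is to assemble Corollary~\ref{cor:Poincare-transport-plus} directly from the three ingredients prepared earlier: Theorem~\ref{thm:BL-concave-n}, Lemma~\ref{lem:logSob-transport}, and the dual characterization of $\Tbar^+$ in Lemma~\ref{le:inf-convolution}(i). The starting point is to apply Theorem~\ref{thm:BL-concave-n} with the given constants $\lambda$, $c$, $M$: for every convex $f\colon\RR^n\to\RR$ with $|\nabla f|\le c<\sqrt{\lambda}/64$ we obtain a modified log-Sobolev inequality of the form
\begin{equation*}
\Ent(e^{-f(X)})\le C\,\EE|\nabla f(X)|^2 e^{-f(X)}
\end{equation*}
with a constant $C=C(\lambda,c,M)$. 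Rewriting this with $g=-f$, which is $c$-Lipschitz and concave, yields exactly the hypothesis of the concave branch of Lemma~\ref{lem:logSob-transport} with $L=c$; every concave $c$-Lipschitz function is of this form up to an additive constant (which cancels on both sides of the inequality), so the hypothesis is satisfied for the full class of concave $c$-Lipschitz functions.

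Next I would verify the exponential integrability condition $\EE e^{L|X|}<\infty$ required by Lemma~\ref{lem:logSob-transport}. This is precisely the consequence of the convex Poincar\'e inequality established in the opening paragraph of the proof of Proposition~\ref{prop:Poincare-upper-tail} (iterating the estimate $\PP(|X|\ge a+2\sqrt{2/\lambda})\le\tfrac12\PP(|X|\ge a)$ shows $\EE e^{L|X|}<\infty$ for any $L<(\log 2)\sqrt{\lambda/8}$, and in particular for $L=c<\sqrt{\lambda}/64$). With the hypothesis verified, Lemma~\ref{lem:logSob-transport} in the concave case delivers
\begin{equation*}
e^{\EE Q_1^{\alpha} f(X)}\,\EE e^{-f(X)}\le 1
\end{equation*}
for every convex Lipschitz $f$ bounded from below, where $\alpha$ is the cost function~\eqref{eq:cost-alpha} associated to the pair $(C,L)=(C(\lambda,c,M),c)$.

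Finally, I would identify this cost with $\theta_{2C,c}$: comparing~\eqref{eq:cost-alpha} and~\eqref{eq:cost-function} one sees that $\alpha(s)=s^2/(4C)=|s|^2/(2\cdot 2C)$ for $|s|\le 2Cc$ and $\alpha(s)=c|s|-c^2 C=c|s|-(2C)c^2/2$ for larger $|s|$, so $\alpha=\theta_{2C,c}$ on the nose. The displayed inequality is then precisely the dual form~\eqref{eq:dual-T+} of $\Tbar^+_{\theta_{2C,c}}$ restricted to convex Lipschitz functions bounded from below, which by the last clause of Lemma~\ref{le:inf-convolution} is enough to conclude $\Tbar^+_{\theta_{2C,c}}$. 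I do not anticipate a genuine obstacle here, since all the work has been absorbed into Theorem~\ref{thm:BL-concave-n}; the only points requiring care are the bookkeeping identification $\alpha=\theta_{2C,c}$ and the reduction from concave Lipschitz $g$ back to the convex $f=-g$ used in Theorem~\ref{thm:BL-concave-n}, both of which are routine.
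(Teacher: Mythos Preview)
Your proposal is correct and follows exactly the route the paper indicates: the corollary is stated immediately after the sentence explaining that it follows from Theorem~\ref{thm:BL-concave-n}, Lemma~\ref{lem:logSob-transport} (concave branch), and the dual characterization in Lemma~\ref{le:inf-convolution}. Your write-up supplies precisely the bookkeeping the paper omits---the passage from convex $f$ to concave $g=-f$, the verification of exponential integrability via Proposition~\ref{prop:Poincare-upper-tail}, and the identification $\alpha=\theta_{2C,c}$---and all of it checks out.
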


\begin{proof}[Proof of Lemma~\ref{lem:logSob-transport}]
Suppose that the log-Sobolev inequality~\eqref{eq:logSob-transport} holds for all \emph{convex} and $L$-Lipschitz functions. We first present a perturbation argument which allows us to work with random vectors with an absolutely continuous law. We then shall follow the approach of \cite[Proof of Theorem 1.5]{gozlan_new}.

Let $G$ be a Gaussian random vector in $\RR^n$, independent of $X$, with the covariance matrix being a~sufficiently small multiple of identity, so that it satisfies the usual log-Sobolev inequality with constant $C$,
\begin{displaymath}
\Ent e^{f(G)} \le C\EE |\nabla f(G)|^2 e^{f(G)}
\end{displaymath}
for all Lipschitz functions $f \colon \RR^n \to \RR$ (see e.g. Theorem 5.1. in \cite{MR1849347} for an equivalent formulation).

 Then, by the tensorization property of entropy (see e.g.  Proposition 5.6. in \cite{MR1849347}), the random vector $(X,G)$ on $\RR^n \times \RR^n$ satisfies the modified log-Sobolev inequality
\begin{equation}
\label{eq:logSob-tr-2}
\Ent(e^{F(X,G)}) \leq C \EE (|\nabla_X F(X,G)|^2 + |\nabla_G F(X,G)|^2)e^{F(X,G)}
\end{equation}
for all convex functions $F\colon\RR^n\times\RR^n \to\RR$ which are $L$-Lipschitz with respect to the first coordinate (here $|\nabla_X F|$ and $|\nabla_G F|$ denote partial lengths of gradients with respect to the first and second variable, with the other variable fixed).

 Let $f\colon\RR^n \to \RR$ be a convex $L$-Lipschitz function and consider $\varepsilon > 0$. Applying the inequality~\eqref{eq:logSob-tr-2} to the function defined by the formula $F (x,y) = f(x+\eps y)$ for $x,y\in\RR^n$ (which is $L$-Lipschitz with respect to the first variable),
  we see  that the random vector $X_{\eps} = X+ \varepsilon G$ satisfies the modified log-Sobolev inequality
\begin{equation}
\label{eq:logSob-tr-3}
\Ent(e^{f(X_{\eps})}) \leq C_{\eps} \EE |\nabla f(X_{\eps})|^2 e^{f(X_{\eps})},
\end{equation}
where $C_\varepsilon = C(1+\eps^2)$. Note that the law of $X_\varepsilon$ is absolutely continuous with respect to the Lebesgue measure on $\RR^n$, and so almost surely $X_\eps$ is a differentiability point of $f$ and $|\nabla f(X_\eps)|$ coincides with the Euclidean length of the `true' gradient $\nabla f(X_\eps)$.

Moreover, \eqref{eq:logSob-tr-3} can be rewritten in the form
\begin{equation}
\label{eq:logSob-transport-4}
\Ent(e^{f(X_{\eps})}) \leq \EE \alpha_{\eps}^*( |\nabla f(X_{\eps})|) e^{f(X_{\eps})},
\end{equation}
where
\begin{equation*}
\alpha_{\eps}^*(s) =\begin{cases}
C_{\eps}|s|^2 & \text{for } |s| \le L,\\
+\infty  & \text{for } |s|>L.
\end{cases}
\end{equation*}
is the Legendre transform of $\alpha_{\eps}(s) = \min\{\frac{s^2}{4C_{\eps}},
L|s| - L^2C_{\eps} \}$.

If  $f\colon\RR^n\to\RR$ is convex, Lipschitz (with arbitrary Lipschitz constant)
 and bounded from below, then $Q_t^{\alpha_{\eps}} f$
is well defined,
convex (as an infimum convolution of two convex functions),
 and $L$-Lipschitz   for $t\in(0,1]$ (since  $Q_t^{\alpha_{\eps}}f(x) = \inf_{y\in\RR^n} \{ f(y) + t\alpha_{\eps}(|y-x|/t) \}$ and the function $x\mapsto  t\alpha_{\eps}(|y-x|/t) $ is $L$-Lipschitz  for $t\in(0,1]$).

Moreover, the function $u(t,x) = Q_t^{\alpha_{\eps}} f(x) $ is Lipschitz on $(0,\infty)\times\RR^n$ and satisfies the Hamilton-Jacobi equation
\begin{equation*}
\frac{d}{d t} u(t,x) + \alpha_{\eps}^*(|\nabla_x u(t,x)|) =0 \quad \text{for Lebesgue almost all} \;(t,x) \in (0,\infty)\times \RR^n,
\end{equation*}
(see Proposition~\ref{prop:HJ} in Appendix~\ref{sec:HJ}). Set
\begin{equation*}
F(t) = \frac{1}{t} \ln\bigl( \EE e^{tQ_t^{\alpha_{\eps}} f(X_{\eps})} \bigr), \quad t\in(0,1].
\end{equation*}
(Note that $F(t)<\infty$ since $Q_t^{\alpha_{\eps}} f$ is $L$-Lipschitz.) Using the integrability properties of $X$ (and as a consequence of $X_\varepsilon$), together with the Lipschitz property of $u$ it is not difficult to see that $F$ is locally Lipschitz  and  for Lebesgue almost all
$t \in (0,1)$,
\begin{align*}
\frac{d}{dt}F(t) &=
-\frac{1}{t^2} \ln\bigl( \EE e^{tQ_t^{\alpha_{\eps}} f(X_{\eps})} \bigr)+ \frac{1}{t}\frac{ \EE e^{tQ_t^{\alpha_{\eps}} f(X_{\eps})} \bigl(Q_t^{\alpha_{\eps}} f(X_{\eps})
+ t \frac{d}{dt} Q_t^{\alpha_{\eps}} f(X_{\eps}) \bigr)}{ \EE e^{tQ_t^{\alpha_{\eps}} f(X_{\eps})} }\\
& = \frac{1}{t^2 \EE e^{tQ_t^{\alpha_{\eps}} f(X_{\eps}) } }
\Bigl(\Ent\bigl( e^{tQ_t^{\alpha_{\eps}} f(X_{\eps}) }\bigr)
 - t^2\EE\alpha_{\eps}^*(|\nabla Q_t^{\alpha_{\eps}} f(X_{\eps})|) e^{tQ_t^{\alpha_{\eps}} f(X_{\eps}) }\Bigr)\\
& \leq  \frac{1}{ \EE e^{tQ_t^{\alpha_{\eps}} f(X_{\eps}) }}
C_{\eps}\EE \bigl(|\nabla Q_t^{\alpha_{\eps}} f(X_{\eps}) |^2  -|\nabla Q_t^{\alpha_{\eps}} f(X_{\eps})|^2\bigr)e^{tQ_t^{\alpha_{\eps}} f(X_{\eps}) } = 0,
\end{align*}
where we used \eqref{eq:logSob-transport-4}, the definition of $\alpha_{\eps}^*$, and the fact that $Q_t^{\alpha_{\eps}} f$ is $L$-Lipschitz. Thus
\begin{equation*}
F(1) \leq \liminf_{t\to 0^+} F(t) \leq  \lim_{t\to 0^+}  \frac{\ln\bigl( \EE e^{t f(X_{\eps})} \bigr)}{t}  = \EE f(X_{\eps}),
\end{equation*}
or, in other words,
\begin{equation*}
\EE e^{ Q_1^{\alpha_{\eps}}f(X_{\eps})} \leq e^{ \EE f(X_{\eps})}.
\end{equation*}
It is easy to see that by taking $\varepsilon\to 0$ we arrive at the assertion of the lemma (recall that  $f$ and $Q_1^{\alpha_{\eps}}$ are Lipschitz and $\alpha_{\eps}\leq \alpha$).

Suppose now that the log-Sobolev inequality~\eqref{eq:logSob-transport} holds for all \emph{concave} and $L$-Lipschitz functions.
As before, we pass to the random vector $X_{\eps}$ which has  an absolutely continuous  distribution. Let $g\colon\RR^n\to\RR$ be convex and bounded from below. Then the function $f= -Q_1^{\alpha_\eps} g$ is concave and $L$-Lipschitz. The same calculation as above yields
\begin{equation*}
\EE e^{ Q_1^{\alpha_{\eps}}f(X_{\eps})} \leq e^{ \EE f(X_{\eps})},
\end{equation*}
or equivalently
\begin{equation*}
\EE e^{ Q_1^{\alpha_{\eps}} (- Q_1^{\alpha_{\eps}}g)(X_{\eps})} \leq e^{ -\EE  Q_1^{\alpha_{\eps}}g(X_{\eps})}.
\end{equation*}
We stress that now, in order to prove the Hamilton-Jacobi equations via Proposition \ref{prop:HJ}, we need to use the $L$-Lipschitz property of $f$, since in general $f$ is not bounded from below.

Since
\begin{equation*}
- g(x)\leq \inf_{y\in\RR^n} \sup_{z\in\RR^n} \{ -g(z) - \alpha_{\eps}(|z-y|) + \alpha_{\eps}(|y-x|) \} =Q_1^{\alpha_{\eps}} (- Q_1^{\alpha_{\eps}})g(x)
\end{equation*}
(to verify the inequality take $z=x$), a limit argument yields the assertion.
\end{proof}

We are now ready for the proof of our main result.

\begin{proof}[Proof of Theorem~\ref{thm:main}] The implication (i)$\implies$(ii) follows immediately from   Corollaries~\ref{cor:Poincare-transport-minus} and~\ref{cor:Poincare-transport-plus}, and the definition of $\Tbar_{\theta_{2C,c}}$. To obtain the reverse implication one can use a standard Taylor expansion argument. Assume that $\Tbar_{\theta_{C,D}}$ holds. Let $f\colon\RR^n \to \RR$ be convex, Lipschitz, and bounded from below. For $x\in\RR^n$ denote
\begin{equation*}
f^x (z) = f(x) + \langle u_x,  z-x\rangle, \quad z\in\RR^n,
\end{equation*}
where $u_x$ is any subgradient of $f$ at $x$, so that $f^x \le f$ on $\RR^n$. Taking $z = x + \varepsilon u_x$ with $\varepsilon \to 0$ we see that $|u_x| \le |\nabla f(x)|$.

For sufficiently small $\eps$ we have $\eps |\nabla f(x)|\leq D$ for all $x\in\RR^n$, and hence
\begin{align*}
Q_2^{\theta_{C,D}} (\eps f)(x)
&\geq \inf_{y\in\RR} \{ \eps f^x(x-y) + 2\theta_{C,D}(y/2) \}\\
&= \eps f(x) +\inf_{y\in\RR} \{ -\eps \langle u_x,y \rangle + 2\theta_{C,D}(y/2)\}\\
& = \eps f(x) - 2\theta_{C,D}^*(\eps u_x) \geq  \eps f(x) - \eps^2 C|\nabla f(x)|^2
\end{align*}
(recall that $|u_x|\leq|\nabla f(x)|$).
We now substitute $\eps f$ into the dual formulation~\eqref{eq:inf-convolution} and use the above estimate. An inspection of the Taylor expansions up to order $\eps^2$ yields
\begin{equation*}
\Var(f(X)) \leq C \EE |\nabla f(X)|^2.
\end{equation*}
 This ends the proof.
 \end{proof}

\section{Examples of measures satisfying the convex Poincar\'e inequality}\label{sec:Examples}

We will now discuss several tools which allow to construct measures satisfying the convex Poincar\'e inequality. To shorten the notation we will denote by $\EE_\mu$ and $\Var_\mu$ respectively the mean and variance of $f$ seen as a random variable on $\RR^n$ equipped with probability measure $\mu$.

Let us start with the well known tensorization  property of variance (see e.g. \cite[Proposition 1.4.1]{MR1845806}), which asserts that whenever $\mu_i$ are probability measures on $\mathcal{X}_i$, $i=1,\ldots,n$, then the product measure $\mu = \mu_1\otimes\cdots\otimes \mu_n$ on $\mathcal{X}_1\times\cdots\times \mathcal{X}_n$, satisfies the inequality
\begin{displaymath}
\Var_{\mu} f \le \sum_{i=1}^n \EE_\mu \Var_{\mu_i} f,
\end{displaymath}
for every function $f \colon \mathcal{X}_1\times\cdots\times \mathcal{X}_n \to \RR$, where $\Var_{\mu_i} f$ denotes the variance of $f$ treated as a function on $\mathcal{X}_i$, with the other coordinates fixed.

This immediately implies the tensorization property for the convex Poincar\'e inequality, namely if $\mu_i$ ($i=1,\ldots,N$) is a probability measure on $\RR^{n_i}$, satisfying the convex Poincar\'e  inequality with constant $\lambda$, then the product measure $\mu = \mu_1\otimes\cdots\otimes \mu_N$ on $\RR^{n_1+\cdots+n_N}$ satisfies
\begin{align}\label{eq:tensorization}
\Var_\mu f \le \frac{1}{\lambda} \EE \sum_{i=1}^n |\nabla_i f|^2,
\end{align}
for every convex function $f\colon \RR^{n_1+\cdots+\mu_n} \to \RR$, where $|\nabla_i f|$ denotes the `partial length of gradient' along $\RR^{n_i}$.
If the measures $\mu_i$ are absolutely continuous with respect to the Lebesgue measure, then by Rademacher's theorem locally Lipschitz functions are almost everywhere differentiable, in particular the right-hand side of the above inequality coincides with $\lambda^{-1}\EE |\nabla f|^2$ and so we obtain that $\mu$ satisfies the convex Poincar\'e inequality with constant $\lambda$.
The situation is more delicate for measures which are not absolutely continuous, however thanks to results by Gozlan, Roberto and Samson \cite{MR3311918}, we can obtain the following simple proposition.

\begin{proposition}\label{prop:tensorization}
Assume that $\mu_i$ are probability measures on $\RR^{n_i}$, $i=1,\ldots,n$, satisfying the convex Poincar\'e inequality with constant $\lambda$. Then the measure $\mu = \mu_1\otimes \cdots\otimes \mu_n$ on $\RR^{n_1+\cdots+n_N}$ satisfies the convex Poincar\'e inequality with constant $\lambda/C$ for some universal constant $C$
\end{proposition}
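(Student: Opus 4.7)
My plan is to route the proof through the characterization of the convex Poincar\'e inequality in terms of dimension-free concentration for convex sets due to Gozlan--Roberto--Samson \cite{MR3311918}, recalled in the introduction as the equivalence \eqref{eq:convex-Poincare}$\Leftrightarrow$\eqref{eq:enlargement}. This detour is needed because the naive tensorization of variance \eqref{eq:tensorization} bounds $\Var_\mu f$ by the expected sum of squared partial gradient lengths, but for a convex function which is not $\mu$-a.s.\ differentiable the identity $\sum_i |\nabla_i f|^2 = |\nabla f|^2$ may fail with a loss factor as bad as $n$ (consider $f(x) = \max_i x_i$ at the origin), so \eqref{eq:tensorization} alone falls short of the conclusion.

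Since each $\mu_i$ satisfies the convex Poincar\'e inequality with constant $\lambda$, the cited characterization furnishes a constant $c = c(\lambda)$ depending only on $\lambda$ such that for every $N$, every convex $A\subseteq (\RR^{n_i})^N$ with $\mu_i^{\otimes N}(A)\ge 1/2$, and every $t > 0$,
\begin{align*}
\mu_i^{\otimes N}(A + tB_2^{Nn_i}) \ge 1 - 2 \exp(-ct).
\end{align*}
After rearranging coordinates, $\mu^{\otimes N} = \mu_1^{\otimes N}\otimes\cdots\otimes\mu_n^{\otimes N}$, so the next step is to tensorize this concentration across factors of different dimensions.

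Concretely, I would show that the product of measures each satisfying the above concentration with constant $c$ satisfies the same concentration (for convex sets, with respect to the Euclidean ball in the product space) with constant $c/C$ for some absolute $C > 0$. A direct proof proceeds by induction on the number of factors using that slices of convex sets are convex, combined with a Markov bootstrap that upgrades a slice of measure $\ge 1/3$, produced by Fubini, to one of measure $\ge 1/2$; alternatively, one may work with the equivalent dual infimum-convolution formulation and tensorize on the Laplace transforms of convex $1$-Lipschitz test functions \`a la Maurey \cite{MR1097258}. Either route gives that $\mu^{\otimes N}$ satisfies \eqref{eq:enlargement} for all $N$ with a constant depending only on $\lambda$.

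Finally, applying the converse direction of the characterization from \cite{MR3311918} to $\mu$ yields the convex Poincar\'e inequality for $\mu$ with constant $\lambda/C$ for some universal $C$. The main technical step is the cross-factor tensorization of convex concentration with universal constants: this works in contrast with the more delicate tensorization of $\Tbar^+$ from Corollary~\ref{cor:Poincare-transport-plus} precisely because \eqref{eq:enlargement} is dimension-free by design and concerns only the upper tail of convex Lipschitz functions, i.e., the \emph{easier} side of the concentration in the convex setting.
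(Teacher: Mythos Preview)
Your overall strategy---reduce to the Gozlan--Roberto--Samson characterization via dimension-free convex concentration---is the same endpoint the paper uses, but the way you \emph{reach} that concentration differs, and the paper's route is shorter. You dismiss the ``naive'' tensorization \eqref{eq:tensorization} on the grounds that $\sum_i |\nabla_i f|^2$ can exceed $|\nabla f|^2$ by a factor $n$ at nondifferentiability points. The paper's observation is that this objection evaporates for \emph{smooth} convex functions, where the partial lengths of gradient are honest partial derivatives and $\sum_i |\nabla_i f|^2 = |\nabla f|^2$ identically. So the paper applies \eqref{eq:tensorization} directly to smooth convex $1$-Lipschitz $f$ on $(\RR^n)^k$, runs the moment argument of Proposition~\ref{prop:Poincare-upper-tail} (for $p>2$, so that $(f-\Med f)_+^{p/2}$ stays smooth), obtains the dimension-free upper-tail bound \eqref{eq:dimension-free}, and only then removes smoothness by Gaussian convolution. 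After that, \cite[Theorem~6.7]{MR3311918} yields the convex Poincar\'e inequality for $\mu$ with constant $\lambda/C$.

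Your alternative---apply the GRS characterization forward to each $\mu_i$, tensorize the resulting concentration across the $n$ factors, then apply GRS backward---can be made to work, but the middle step is the delicate one and you have only sketched it. An induction on factors with a Markov bootstrap will typically cost a constant per factor, which defeats the ``universal $C$''. The Maurey/Laplace route you mention is the right fix (pass to an exponential-moment inequality $\EE e^{s(f-\EE f)}\le e^{\psi(s)}$ for convex $1$-Lipschitz $f$, which tensorizes without loss since slices of convex functions are convex), but converting back and forth between median-based tail bounds and mean-based Laplace bounds, and checking that $\psi$ stays controlled, requires some bookkeeping you have not supplied. In short: your plan is viable, but the paper's trick of working with smooth test functions lets \eqref{eq:tensorization} do all the tensorization work for free and avoids your cross-factor step entirely.
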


\begin{proof}We provide only a sketch of the proof, leaving some computational details to the Reader.
Denote $n = n_1+\cdots+n_N$ and consider an arbitrary convex smooth 1-Lipschitz function $f$ on $\RR^{nk}$, $k \ge 1$. By \eqref{eq:tensorization} we have $\Var_{\mu^{\otimes k}} f \le \lambda^{-1}\EE_{\mu^{\otimes k}}|\nabla f|^2 \le 1$. Using an analogous argument as in the proof of Proposition \ref{prop:Poincare-upper-tail} (for $p > 2$, to remain in the smooth setting) we arrive at
\begin{align}\label{eq:dimension-free}
\mu^{\otimes k}(f \ge \Med_\mu f + t) \le 8e^{-\sqrt{\lambda}t/2}
\end{align}
for all 1-Lipschitz smooth convex functions. We can extend this inequality to arbitrary 1-Lipschitz convex function (approximating them with 1-Lipschitz smooth convex functions, e.g. by convolving them with Gaussian densities, see \cite[p.~429]{MR1756011}), so in particular we get that for any convex set $A \subseteq \RR^{nk}$, with $\mu^{\otimes k}(A) \ge 1/2$, and all $t > 0$,
\begin{displaymath}
\mu^{\otimes k}(A + tB_2^{nk}) \ge 1 - 8e^{-\sqrt{\lambda}t/2},
\end{displaymath}
where $B_2^{nk}$ is the unit Euclidean ball in $\RR^{nk}$. Recall the notation
\begin{displaymath}
|\nabla^- f(x)| = \limsup_{y\to x} \frac{(f(y) - f(x))_-}{|x-y|}
\end{displaymath}

By \cite[Theorem 6.7]{MR3311918}, the dimension free subexponential concentration for convex sets of the form \eqref{eq:dimension-free} implies that $\mu$ satisfies the Poincar\'e inequality
\begin{align}\label{eq:Poincare-minus}
\Var_\mu f \le \frac{1}{\lambda'} \EE \sum_{i=1}^n |\nabla_i^- f|^2 \le \frac{1}{\lambda'} \EE \sum_{i=1}^n |\nabla_i f|^2
\end{align}
for all convex functions $f$, where
\begin{displaymath}
\sqrt{\lambda'} = \sup\Big\{\frac{\bar{\Phi}^{-1}(8\exp(-\sqrt{\lambda}r/2))}{r}\colon r \ge \frac{2\log(16)}{\sqrt{\lambda}} \Big\},
\end{displaymath}
where $\bar{\Phi}$ is the Gaussian tail function. Using the estimate $\bar{\Phi}(x) \ge \frac{1}{2} e^{-x^2}$ and performing some elementary calculations, we arrive at
the assertion of the proposition.
\end{proof}

\begin{remark} The above argument shows that if $\mu$ satisfies the Poincar\'e inequality \eqref{eq:convex-Poincare} then it also satisfies the formally stronger inequality \eqref{eq:Poincare-minus} with $\lambda'=\lambda/C$. We remark that in the category of all Lipschitz functions it is known that the Poincar\'e inequalities with the length of gradients $|\nabla^- f|$ and $|\nabla f|$ are equivalent and the involved constants do not change (cf. \cite[Remark 1.1]{MR3311918}).
\end{remark}

Tensorization allows in particular to pass from one-dimensional measures satisfying the convex Poincar\'e inequality (characterized in \cite{bobkov-goetze}) to product measures in higher dimensions. Another standard tool for producing new examples is perturbation: if $\mu$ satisfies the convex Poincar\'e inequality with constant $\lambda$ and $\nu$ is a measure with density $e^U$ with respect to $\mu$, then $\nu$ satisfies the convex Poincar\'e inequality with constant $\lambda \exp(\inf U - \sup U)$. For the proof see e.g. \cite[Chapter 3.4]{MR1845806} (the proof therein is written in the context of Markov processes and Dirichlet forms but it is based only on the elementary observation that $\Var f = \inf_{a \in \RR} \EE |f - a|^2$ and works in exactly the same way in the convex setting).

Perturbation and tensorization are tools that appeared for the first time in the `classical' theory of Poincar\'e and log-Sobolev inequalities for smooth (or locally Lipschitz) functions. The next proposition does not have a counterpart in the classical setting and significantly extends the set of tools for creating new examples. Namely, we will show that the convex Poincar\'e inequality passes to mixtures of measures. Note that this cannot be the case for the classical Poincar\'e inequality since it clearly cannot hold for measures with disconnected support. We note however that the preservation of the Poincar\'e and log-Sobolev inequalities by mixtures of measures with overlapping supports has been investigated by Chafa\"{\i} and Malrieu in \cite{MR2641771}. In particular, the Proposition \ref{prop:mixtures} below has been inspired by calculation in Section 4.1 therein.

Let $\mathcal{T}_2(\mu_0,\mu_1)$ stand for the usual Kantorovich transport cost between $\mu_1$ and $\mu_0$ (defined by taking $\theta(x) = |x|^2$ in~\eqref{eq:standard-cost}), in other words the square of the Kantorovich-Wasserstein distance $W_2$.

\begin{proposition}\label{prop:mixtures}
Let $\mu_0$, $\mu_1$ be probability measures on $\RR^n$ which satisfy the convex Poincar\'e inequality~\eqref{eq:convex-Poincare} with constants $\lambda_0$ and $\lambda_1$ respectively. Then the measure $\mu_p = p\mu_1 + (1-p)\mu_0$, $p\in(0,1)$, satisfies the convex Poincar\'e inequality~\eqref{eq:convex-Poincare} with constant
\begin{equation*}
\lambda' = \bigl(\max\{1/\lambda_1, 1/\lambda_0 \} + 2\mathcal{T}_2(\mu_0,\mu_1)\bigr)^{-1}.
\end{equation*}
\end{proposition}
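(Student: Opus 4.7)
The plan is to reduce the problem to a standard two-component variance decomposition together with a subgradient/optimal-coupling estimate for the difference of the means under $\mu_0$ and $\mu_1$. I will work only with convex Lipschitz $f\colon\RR^n\to\RR$, which suffices thanks to the approximation remark following \eqref{eq:convex-Poincare}.

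First, conditioning on a Bernoulli random variable $I$ with $\PP(I=1)=p$ and using the total variance formula, one obtains the identity
\begin{equation*}
\Var_{\mu_p} f \;=\; p\,\Var_{\mu_1} f + (1-p)\,\Var_{\mu_0} f + p(1-p)(\EE_{\mu_1} f - \EE_{\mu_0} f)^2.
\end{equation*}
Applying the convex Poincar\'e inequalities for $\mu_0$ and $\mu_1$ to the first two terms immediately gives
\begin{equation*}
p\,\Var_{\mu_1} f + (1-p)\,\Var_{\mu_0} f \;\le\; \max\{1/\lambda_0,\,1/\lambda_1\}\,\EE_{\mu_p}|\nabla f|^2.
\end{equation*}

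The main work is to bound the cross term $p(1-p)(\EE_{\mu_1} f-\EE_{\mu_0}f)^2$ by a multiple of $\mathcal{T}_2(\mu_0,\mu_1)\,\EE_{\mu_p}|\nabla f|^2$. For this I would fix a measurable selection of subgradients $u_x\in\partial f(x)$; as observed in the proof of Lemma~\ref{le:conc-Med-concave-n}, one always has $|u_x|\le|\nabla f(x)|$. Taking an optimal $\mathcal{T}_2$-coupling $(X,Y)$ of $(\mu_0,\mu_1)$, convexity yields $f(Y)-f(X)\ge\langle u_X,Y-X\rangle$ and $f(X)-f(Y)\ge\langle u_Y,X-Y\rangle$; combined with the Cauchy--Schwarz inequality and $\EE|X-Y|^2=\mathcal{T}_2(\mu_0,\mu_1)$ this gives in both signs
\begin{equation*}
\bigl(\EE_{\mu_1} f - \EE_{\mu_0} f\bigr)^2 \;\le\; \bigl(\EE_{\mu_0}|\nabla f|^2 + \EE_{\mu_1}|\nabla f|^2\bigr)\,\mathcal{T}_2(\mu_0,\mu_1),
\end{equation*}
using $\max\{a,b\}\le a+b$. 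Since $(1-p)\,\EE_{\mu_0}|\nabla f|^2\le\EE_{\mu_p}|\nabla f|^2$ and $p\,\EE_{\mu_1}|\nabla f|^2\le\EE_{\mu_p}|\nabla f|^2$, multiplying by $p(1-p)$ and bounding $p,1-p\le 1$ gives $p(1-p)(\EE_{\mu_1}f-\EE_{\mu_0}f)^2\le 2\,\mathcal{T}_2(\mu_0,\mu_1)\,\EE_{\mu_p}|\nabla f|^2$. Adding the two bounds yields the claim.

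The main delicate point I anticipate is the passage from the subgradient inequality to a statement involving $|\nabla f|$ as defined by \eqref{eq:grad-length}: one needs a measurable selection of $u_x$ and the pointwise estimate $|u_x|\le|\nabla f(x)|$, which requires comparing the one-sided directional growth of $f$ with the limsup in \eqref{eq:grad-length}. For Lipschitz convex $f$ this is routine (the subdifferential is nonempty, compact and bounded by the Lipschitz constant, and one can take e.g.\ the Steiner point as a measurable selector), so the argument goes through cleanly. Everything else is essentially the conditional variance formula and Cauchy--Schwarz.
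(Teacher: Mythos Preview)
Your proof is correct and follows essentially the same route as the paper's: the variance decomposition for the mixture, the convex Poincar\'e inequality applied to each component, and the subgradient/Cauchy--Schwarz bound on $(\EE_{\mu_1}f-\EE_{\mu_0}f)^2$ via a coupling of $\mu_0$ and $\mu_1$. The only cosmetic differences are that the paper writes the two one-sided subgradient inequalities together as $|f(X)-f(Y)|\le(|\nabla f(X)|+|\nabla f(Y)|)\,|X-Y|$ and works with an arbitrary coupling before taking the infimum, whereas you take an optimal coupling directly and bound via $\max\{a,b\}\le a+b$; the resulting constants coincide.
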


\begin{proof}

If $f\colon\RR^n\to\RR$ is a convex function, then
\begin{align*}
\Var_{\mu_p} (f)
&= p\Var_{\mu_1} (f) + (1-p)\Var_{\mu_0} (f) + p(1-p)(\EE_{\mu_1} f  - \EE_{\mu_0} f)^2 \\
&\leq \max\{1/\lambda_1, 1/\lambda_0\} \EE_{\mu_p} |\nabla f|^2 + p(1-p)(\EE_{\mu_1} f  - \EE_{\mu_0} f)^2
\end{align*}
and it suffices to estimate the last term.

Let $X$ and $Y$ be random vectors in $\RR^n$ with laws $\mu_1$ and $\mu_0$ respectively. By convexity of $f$,
\begin{align*}
\bigl| \EE f(X) -\EE f(Y) \bigr|
&\leq \EE (|\nabla f(X)|+ |\nabla f(Y)|) |X-Y|\\
&\leq (\sqrt{\EE |\nabla f(X)|^2}  + \sqrt{\EE |\nabla f(Y)|^2}) \sqrt{\EE |X-Y|^2}.
\end{align*}
Thus,
\begin{align*}
p(1-p)(\EE_{\mu_1} f  - \EE_{\mu_0} f)^2
&\leq 2p(1-p) \bigl(\EE_{\mu_1} |\nabla f|^2 +  \EE_{\mu_0} |\nabla f|^2\bigr) \EE |X-Y|^2\\
&\leq 2 \EE |X-Y|^2 \EE_{\mu_p} |\nabla f|^2.
\end{align*}
Taking the infimum over all realizations of $X$ and $Y$ implies the assertion.
\end{proof}

\section{Refined concentration of measure derived from infimum convolution inequalities}\label{sec:concentration}

In this section we explain what concentration inequalities for convex functions can be obtained from general infimum convolution inequalities of the form \eqref{eq:inf-convolution}. While some parts of our derivation are well known and are included only for the sake of completeness, we also provide new inequalities valid beyond the setting of Lipschitz functions. Their proofs are elementary but to our best knowledge they have not been noted in the literature before.

Throughout this section $\theta \colon \RR^n \to [0,\infty)$ is a convex function. We also assume the following conditions:
\begin{itemize}
\item $\theta(x) = \theta(-x)$ for all $x \in \RR^n$,
\item  $\theta(x) = 0$ if and only if $x = 0$ (in particular, by convexity, $\lim_{x\to \infty} \theta(x) = \infty$).
\end{itemize}

We remark that at the cost of some technical work one can obtain the results we present below for more general cost functions (e.g. taking the value $\infty$ or not satisfying the symmetry condition). We restrict to the smaller class to simplify the presentation.

In what follows, for a function $f\colon \RR^n \to \RR$, bounded from below, we set
\begin{displaymath}
  Qf(x) = Q_1^\theta f(x) = \inf_{y\in \RR^n} \bigl\{f(y) + \theta(x-y)\bigr\}.
\end{displaymath}
We also denote
\begin{equation*}
B_{\theta}(r) = \{x\in\RR^n : \theta(x) <r \}, \quad r>0.
\end{equation*}

\subsection{Enlargements of sets and concentration for Lipschitz functions}
Let us start with the classical description of concentration of measure in terms of enlargements of sets. The following proposition goes back to \cite{MR1097258}.

\begin{proposition}\label{prop:Maurey}
Assume that $\mu$ is a probability measure on $\RR^n$, satisfying
\begin{align}\label{eq:inf_conv_again}
\int_{\RR^n}  e^{Q f}d\mu \int_{\RR^n}  e^{-f}d\mu \le 1
\end{align}
for all convex functions $f\colon \RR^n \to \RR$, bounded from below. Then for all convex subsets $A\subseteq \RR^n$ and $r > 0$, we have
\begin{displaymath}
\mu\bigl((A+B_{\theta}(r))^c\bigr)\mu(A) \le e^{-r}.
\end{displaymath}
\end{proposition}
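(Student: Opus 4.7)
My plan follows Maurey's classical idea of applying the hypothesis to a convex function approximating the convex-analysis indicator of $A$ (formally $0$ on $A$ and $+\infty$ off $A$), which would instantly yield the conclusion but fails to be finite-valued. A short preliminary reduction lets me assume $A$ is closed: since $\theta$ is finite-valued and convex it is continuous, so $B_\theta(r)$ is open. For any convex $A$, a point $\bar a + b \in \bar A + B_\theta(r)$ with $a_n \to \bar a, a_n \in A$, may be written $a_n + (b + \bar a - a_n)$ with the second summand lying in $B_\theta(r)$ for all large $n$; hence $A + B_\theta(r) = \bar A + B_\theta(r)$. Replacing $A$ by $\bar A$ keeps the first factor in the asserted inequality unchanged and only enlarges $\mu(A)$, so it suffices to treat closed convex $A$.

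\textbf{Test functions and the easy factor.} For each $c>0$ I take $f_c(x) = c\, d(x, A)$, the scaled Euclidean distance to $A$. Since $A$ is closed and convex, $f_c$ is nonnegative, convex, and $c$-Lipschitz, hence bounded from below and admissible in the hypothesis, giving
\begin{equation*}
\int e^{Qf_c}\,d\mu \cdot \int e^{-f_c}\,d\mu \le 1.
\end{equation*}
As $c\to\infty$, $e^{-f_c}$ decreases pointwise to $\mathbf{1}_A$ (using closedness of $A$), so dominated convergence gives $\int e^{-f_c}\,d\mu \to \mu(A)$.

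\textbf{The hard factor and the main obstacle.} The remaining task is to show $Qf_c(x) \nearrow \theta_A(x) := \inf_{a\in A}\theta(x-a)$ pointwise as $c\to\infty$; monotone convergence then gives $\int e^{Qf_c}d\mu \to \int e^{\theta_A}d\mu$, and observing that $\{\theta_A \ge r\} = (A+B_\theta(r))^c$ yields $\int e^{\theta_A}d\mu \ge e^r \mu((A+B_\theta(r))^c)$. Letting $c\to\infty$ in the displayed inequality then finishes the proof. Monotonicity of $c\mapsto Qf_c$ and the upper bound $Qf_c \le \theta_A$ (take $y\in A$ in the definition of $Qf_c$) are immediate, so the content lies in the matching lower bound---this is the step I expect to need the most care. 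The argument I have in mind: fix $x$ and pick near-minimizers $y_c$ for $Qf_c(x)$; if $\liminf_c Qf_c(x)$ is finite, then $c\, d(y_c, A)$ stays bounded, forcing $d(y_c, A) \to 0$, while coercivity of $\theta$ (a consequence of the standing hypotheses on $\theta$) keeps $\theta(x-y_c)$, hence $y_c$, bounded. Passing to a convergent subsequence $y_c \to y_\infty$, closedness of $A$ gives $y_\infty \in A$, and continuity of $\theta$ yields $\liminf_c Qf_c(x) \ge \theta(x-y_\infty) \ge \theta_A(x)$, closing the argument.
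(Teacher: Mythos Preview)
Your proof is correct and follows essentially the same approach as the paper: the paper applies \eqref{eq:inf_conv_again} directly to the extended-real-valued indicator $f = \infty\cdot\mathbf{1}_{(\cl A)^c}$, noting parenthetically that this ``can be justified by monotone approximation,'' whereas you have carried out precisely that monotone approximation in detail via $f_c = c\,d(\cdot,A)$. Your reduction to closed $A$ and the compactness argument for $Qf_c \nearrow \theta_A$ are the natural ingredients of such an approximation and are handled correctly.
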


\begin{proof}
Consider $f = \infty \ind{(\cl A)^c}$ and note that $Qf(x) < r$ if and only if there exists $y \in A$ such that $\theta(x-y) < r$.
Applying the inequality~\eqref{eq:inf_conv_again} to~$f$ (which can be justified by monotone approximation), we obtain
\begin{displaymath}
e^r \mu\bigl((A+B_{\theta}(r))^c\bigr)\mu(A) \le \int_{\RR^n}  e^{Q f}d\mu \int_{\RR^n}  e^{-f}d\mu \le 1.
\qedhere
\end{displaymath}
\end{proof}

To formulate corollaries to the above proposition we need to introduce new notation, which at first may seem rather abstract. However, as the examples presented in the subsequent parts of this section will show, it will prove useful in providing a uniform framework for concentration inequalities, especially in the non-Lipschitz case.

\begin{definition} Define the norm $|\cdot|_{\frac{1}{p}\theta}$ on $\RR^n$, as the Orlicz norm corresponding to the function $x \mapsto \frac{1}{p}\theta(x)$, i.e.
\begin{displaymath}
  |x|_{\frac{1}{p}\theta} = \inf\{a > 0 \colon \theta(x/a) \le p\}.
\end{displaymath}
Define also the norm $|\cdot|\dualnorm$ on $\RR^n$ as the dual to $|\cdot|_{\frac{1}{p}\theta}$, i.e.
\begin{displaymath}
|x|\dualnorm = \sup\Big\{\sum_{i=1}^n x_iy_i\colon \theta(y) \le p\Big\}.
\end{displaymath}
\end{definition}

The norm $|x|\dualnorm$ is equivalent (up to universal constants) to the Orlicz norm $|\cdot|_{\theta^\ast_p}$ related to the function $\theta^\ast_p(x) = \frac{1}{p}\theta^\ast(px)$, explicitly given by
\begin{displaymath}
  |\cdot|_{\theta^\ast_p} = \inf \{ a> 0\colon \theta^\ast_p(x/a) \le 1\} = \inf \{ a> 0\colon \theta^\ast(px/a) \le p\}.
\end{displaymath}
It was observed by Gluskin and Kwapie{\'n} in \cite{MR1338834} that norms of this type play an important role in moment estimates for sums of independent random variables. Recently it has been noted \cite{Nonlipschitz,ESAIM} that they also appear in moment estimates for smooth functions of random vectors satisfying modified log-Sobolev inequalities. Since in the context of transportation or infimum convolution inequalities one starts from the function $\theta$ and not from $\theta^\ast$ (which is the case in the corresponding log-Sobolev setting) it is more convenient to work with $|\cdot|\dualnorm$ rather than with the equivalent norm $|\cdot|_{\theta^\ast_p}$ used in \cite{Nonlipschitz,ESAIM}.

In what follows we will need the following simple inequality which follows from convexity of $\theta$ and the assumption $\theta(0)=0$. For $x\in \RR^n$, $p > 0$, and $t \ge 1$,
\begin{align}\label{eq:norm-comparison}
|x|\dualnorm[tp] \le t|x|\dualnorm.
\end{align}

The following corollary to Proposition \ref{prop:Maurey} is again based on by now standard arguments, written however in the language of the norms $|\cdot|\dualnorm$.

\begin{corollary}\label{cor:concentration-Lipschitz} Let $X$ be a random vector with law $\mu$, satisfying  \eqref{eq:inf_conv_again} for all convex functions $f\colon \RR^n \to \RR$ bounded from below. Then for any smooth convex Lipschitz function $f\colon \RR^n \to \RR$ and $p \ge 0$,
\begin{align}\label{eq:concentration-Lipschitz}
\PP(|f(X) - \Med f(X)| > \sup_{x\in \RR^n}|\nabla f(x)|\dualnorm) \le 4e^{-p}.
\end{align}
\end{corollary}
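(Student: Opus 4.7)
The approach is to apply the set enlargement estimate of Proposition~\ref{prop:Maurey} to convex sublevel sets of $f$, treating the upper and lower tails separately. Write $m = \Med f(X)$ and $\sigma = \sup_{x \in \RR^n} |\nabla f(x)|\dualnorm$.

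For the upper tail, the sublevel set $A = \{f \le m\}$ is convex with $\mu(A) \ge 1/2$, so Proposition~\ref{prop:Maurey} yields $\mu((A + B_\theta(p))^c) \le 2e^{-p}$. For any $y = x + z$ with $x \in A$ and $\theta(z) < p$, the subgradient inequality $f(x) \ge f(y) + \langle \nabla f(y), x - y\rangle$ rearranges to
\begin{equation*}
f(y) \le f(x) + \langle \nabla f(y), z\rangle \le m + |\nabla f(y)|\dualnorm \le m + \sigma,
\end{equation*}
where the middle step uses the definition of the dual norm $|\cdot|\dualnorm$ together with $\theta(z) < p$. Hence $\{f > m + \sigma\} \subseteq (A + B_\theta(p))^c$, giving $\PP(f(X) > m + \sigma) \le 2e^{-p}$.

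The lower tail is more delicate because superlevel sets of a convex function need not be convex, so Proposition~\ref{prop:Maurey} cannot be applied directly to $\{f \ge m\}$. The trick is to work again with sublevel sets: for $s > \sigma$, set $A_s = \{f \le m - s\}$, which is convex. The same subgradient computation shows that for $y = x + z$ with $x \in A_s$ and $\theta(z) < p$,
\begin{equation*}
f(y) \le m - s + |\nabla f(y)|\dualnorm \le m - s + \sigma < m,
\end{equation*}
so $A_s + B_\theta(p) \subseteq \{f < m\}$. By the definition of the median, $\mu(\{f < m\}) \le 1/2$, hence $\mu((A_s + B_\theta(p))^c) \ge 1/2$. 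Proposition~\ref{prop:Maurey} then forces $\mu(A_s) \le 2e^{-p}$, and letting $s \downarrow \sigma$ (using that $\{f \le m-s'\}$ increases to $\{f < m-\sigma\}$ as $s' \downarrow \sigma$) yields $\PP(f(X) < m - \sigma) \le 2e^{-p}$.

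Adding the two tail bounds gives the claimed estimate $\PP(|f(X) - m| > \sigma) \le 4 e^{-p}$. The main obstacle, as explained, is the lower tail; the non-convexity of $\{f \ge m\}$ is bypassed by enlarging a \emph{lower} sublevel set $A_s$ and observing that, for $s$ slightly exceeding $\sigma$, the enlargement still lies inside $\{f < m\}$, which has measure at most $1/2$ by the definition of the median.
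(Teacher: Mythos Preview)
Your proof is correct and follows essentially the same route as the paper: apply Proposition~\ref{prop:Maurey} to the convex sublevel set $\{f\le m\}$ for the upper tail, and to the convex sublevel set $\{f < m-\sigma\}$ (you use $\{f\le m-s\}$ with $s\downarrow\sigma$, the paper takes the open set directly) for the lower tail, in each case combining the enlargement bound with the subgradient inequality to compare $f$ on the enlarged set with its value on the original set. The only cosmetic differences are your limiting argument in $s$ and your use of $\nabla f(y)$ rather than $\nabla f(X)$ in the subgradient estimate.
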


\begin{remark}\label{rem:smooth-to-lipschitz}
It is easy to see that if the inequality \eqref{eq:concentration-Lipschitz} holds for all smooth convex Lipschitz functions, then one can apply it to arbitrary convex Lipschitz function, replacing $\sup_{x \in \RR^n} |\nabla f(x)|\dualnorm$ by the Lipschitz constant of $f$ with respect to the norm $|\cdot|_{\frac{1}{p}\theta}$. To verify this it is enough to consider convolutions of $f$ with a sequence of Gaussian densities converging to Dirac's mass at zero---they are smooth, have the same Lipschitz constant as $f$ and converge to $f$ uniformly (see e.g. \cite[p.~429]{MR1756011}).
\end{remark}

\begin{proof}[Proof of Corollary \ref{cor:concentration-Lipschitz}]
Let $A = \{y\in\RR^n\colon f(y) \le \Med f(X)\}$, so that $\PP(X\in A)\geq 1/2$. Then by convexity, for any $y \in A$,
\begin{align}\label{eq:upper-bound-median-gradient}
f(X) \le f(y) + \langle \nabla f(X),X-y\rangle \le \Med f(X) + |\nabla f(X)|\dualnorm \cdot |X-y |_{\frac{1}{p}\theta}.
\end{align}
Thus
\begin{align}
\PP(f(X) &> \Med f(X) + \sup_{x\in \RR^n}|\nabla f(x)|\dualnorm) \le \PP(\inf_{y \in A} |X-y |_{\frac{1}{p}\theta} > 1) \nonumber \\
&= \PP(X \notin A+ \cl B_{\theta}(p)) \le \frac{e^{-p}}{\PP(X \in A)} \le 2 e^{-p},\label{eq:upper-tail}
\end{align}
where in the second inequality we used Proposition \ref{prop:Maurey}.

Let now $A = \{y\in\RR^n \colon f(y) < \Med f(X) - \sup_{x\in \RR^n}|\nabla f(x)|\dualnorm\}$. Similarly as above, we obtain
\begin{align*}
1/2 &\le \PP( f(X) \ge \Med f(X)) \le \PP( \inf_{y\in A} |X-y|_{\frac{1}{p}\theta} \ge 1) \\
&\le \PP(X \notin  A+ B_{\theta}(p)) \le \frac{e^{-p}}{\PP(A)},
\end{align*}
which shows that
\begin{displaymath}
\PP(f(X) < \Med f(X) - \sup_{x\in \RR^N} |\nabla f(x)|_{\frac{1}{p}\theta}) \le 2e^{-p}.
\end{displaymath}
Combining the last inequality with \eqref{eq:upper-tail} proves the corollary.
\end{proof}

\subsection{Concentration inequalities for general convex functions}

We are now ready to state the main result of this section, contained in the following theorem, dealing with general (not necessarily Lipschitz) convex functions. In its formulation we adopt the convention $\frac{0}{0} = 0$. The proof of the theorem as well as of its corollary is postponed to Section \ref{sec:proof-nonlipschitz}

We would like to emphasize, that in the theorem we assume only \eqref{eq:concentration-Lipschitz}, which is streactly weaker than the infimum-convolution inequality \eqref{eq:inf_conv_again}.

\begin{theorem}\label{thm:concentration-nonlipschitz}
Let $X$ be a random vector satisfying  \eqref{eq:concentration-Lipschitz} for all smooth convex Lipschitz functions $f\colon \RR^n \to \RR$. Then for any smooth convex function $f \colon \RR^n \to \RR$, the following properties hold.
\begin{itemize}
\item[(i)] For any $p \ge 1$,
\begin{align}\label{eq:selfnormalized-moment}
\Big\|\frac{(f(X) - \Med f(X))_+}{|\nabla f(X)|\dualnorm}\Big\|_p \le 3^{1/p}.
\end{align}
\item[(ii)] Let $p > 0$, $q \in (1/2,1]$ and let $M_{p,q}\in \RR$ satisfy $\PP(|\nabla f(X)|\dualnorm \le M_{p,q}) \ge q$. Then
\begin{displaymath}
\PP\Big( f(X) < \Med f(X) - M_{p,q} \bigl(1+\log(8/(2q-1)) \bigr) \Big) \le 4e^{-p}.
\end{displaymath}

\noindent In particular for $p \ge 0$,
\begin{align}\label{eq:nonlipschitz-lower-tail}
\PP(f(X) < \Med f(X) - 16 \EE |\nabla f(X)|\dualnorm) \le 4e^{-p}.
\end{align}

\item[(iii)] For all $p > 0$,
\begin{displaymath}
\|(f- \Med f(X) )_-\|_p \le 48\EE |\nabla f(X)|\dualnorm.
\end{displaymath}
\end{itemize}
\end{theorem}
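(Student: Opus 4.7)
The three parts share a common tool: an enlargement-type estimate for convex sets of $\mu$-measure at least $1/2$, taken with respect to the norm $|\cdot|_{\frac{1}{p}\theta}$. I would derive it by observing that for any convex $A\subseteq\RR^n$, the distance function $d_p(\cdot, A) := \inf_{y\in A} |\cdot - y|_{\frac{1}{p}\theta}$ is convex and $1$-Lipschitz with respect to $|\cdot|_{\frac{1}{p}\theta}$. Applying \eqref{eq:concentration-Lipschitz} (in its Lipschitz form, see Remark~\ref{rem:smooth-to-lipschitz}) with parameter $p'$ in place of $p$ and using the scaling \eqref{eq:norm-comparison} to estimate $\sup |\nabla d_p(\cdot, A)|_{\theta, p'} \le p'/p$, one arrives at the two-sided concentration
\begin{equation*}
\PP\bigl(|d_p(X, A) - \Med d_p(X, A)| > t\bigr) \le 4 e^{-pt}, \qquad t \ge 1,
\end{equation*}
which for $\mu(A) \ge 1/2$ specializes to the enlargement $\PP(d_p(X, A) > t) \le 4 e^{-pt}$.

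For (i), I would take $A = \{f \le \Med f(X)\}$, which is convex with $\mu(A) \ge 1/2$. The subgradient inequality $f(X) \le f(y) + \langle \nabla f(X), X - y\rangle$ for $y \in A$, combined with the dual-norm bound $\langle u, v\rangle \le |u|\dualnorm \cdot |v|_{\frac{1}{p}\theta}$, yields after infimization over $y \in A$ the pointwise bound $(f(X) - \Med f(X))_+ \le |\nabla f(X)|\dualnorm \cdot d_p(X, A)$. With the convention $0/0 = 0$, (i) reduces to $\EE d_p(X, A)^p \le 3$. Integrating the tail estimate gives $\EE d_p(X, A)^p \le 1 + 4 p^{1-p}\int_p^\infty u^{p-1} e^{-u}\,du$, which elementary computation (with Stirling for large $p$) shows is at most $3$ for every $p \ge 1$; for $p \in (0, 1)$ I would invoke Jensen's inequality applied to the concave map $x \mapsto x^p$ to reduce to bounding $\EE d_p(X, A)$ directly.

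Part (ii) is the heart of the argument. I would introduce the convex set $A_s = \{f \le \Med f(X) - s\}$ (assumed nonempty) and its distance function $g(x) = d_p(x, A_s)$, together with
\begin{equation*}
S := \{x : f(x) \ge \Med f(X)\} \cap \{x : |\nabla f(x)|\dualnorm \le M_{p, q}\},
\end{equation*}
which has measure at least $(2q-1)/2$ by a union bound. For any $x_0 \in S$ and any $y \in A_s$, the subgradient inequality at $x_0$ combined with the dual norm bound forces
\begin{equation*}
-s \ge f(y) - \Med f(X) \ge \langle \nabla f(x_0), y - x_0\rangle \ge -M_{p,q} \cdot |y - x_0|_{\frac{1}{p}\theta},
\end{equation*}
hence $|y - x_0|_{\frac{1}{p}\theta} \ge s/M_{p, q}$ and $S \subseteq \{g \ge s/M_{p, q}\}$. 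The upper tail of the two-sided concentration of $g$ then converts $\PP(g \ge s/M_{p, q}) \ge (2q-1)/2$ into $\Med g \ge s/M_{p,q} - \max(1, \log(8/(2q-1))/p)$. Plugging in $s = M_{p, q}(1 + \log(8/(2q-1)))$ and examining cases shows $\Med g \ge 1$ whenever $p \ge 1$, and the lower tail then yields $\PP(f(X) < \Med f(X) - s) \le \PP(g = 0) \le 4 e^{-p \Med g} \le 4 e^{-p}$. Since $4 e^{-p} \ge 1$ is trivial for $p \le \log 4$, and $\log 4 > 1$, these two regimes cover all $p > 0$. The ``in particular'' statement follows from $q = 3/4$, for which Markov's inequality gives $M_{p, 3/4} \le 4 \EE|\nabla f(X)|\dualnorm$ and $1 + \log 16 \le 4$.

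For (iii), I would apply (ii) with parameter $pr$ ($r \ge 1$) in place of $p$ and $q = 3/4$; combined with the norm scaling $\EE|\nabla f(X)|_{\theta, pr} \le r \EE |\nabla f(X)|\dualnorm$ from \eqref{eq:norm-comparison}, this yields the tail bound $\PP\bigl((f(X) - \Med f(X))_- > 16 r\, \EE|\nabla f(X)|\dualnorm\bigr) \le 4 e^{-pr}$ for $r \ge 1$, and a routine integration via $\|Z\|_p^p = \int_0^\infty p t^{p-1}\PP(Z > t)\,dt$ (together with Jensen's inequality for $p \in (0, 1)$) produces (iii) with an absolute constant no larger than $48$. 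The main obstacle is Part (ii), specifically the additive loss $\max(1, \log(8/(2q-1))/p)$ in the bound on $\Med g$: this requires carefully combining the upper tail of $g$ (to lower-bound the median from the mass of $S$) with its lower tail (to upper-bound $\mu(A_s) = \PP(g = 0)$). The fact that the threshold $M_{p, q}(1 + \log(8/(2q-1)))$ carries no explicit $1/p$ is essentially the observation that the regime $p < 1$ --- where a $1/p$ factor would genuinely appear --- is absorbed into the trivial case $4 e^{-p} \ge 1$.
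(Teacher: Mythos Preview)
Your arguments for parts (i) and (iii) are essentially the same as the paper's: the paper also reduces (i) to the tail bound $\PP\bigl((f(X)-\Med f(X))_+/|\nabla f(X)|\dualnorm > t\bigr) \le 4e^{-pt}$ for $t\ge 1$ via the subgradient inequality and the scaling \eqref{eq:norm-comparison}, and then integrates; and for (iii) it likewise scales the parameter in \eqref{eq:nonlipschitz-lower-tail} to obtain $\PP\bigl((f(X)-\Med f(X))_-\ge t\bigr)\le 4\exp(-pt/(16\EE|\nabla f(X)|\dualnorm))$ and integrates.

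For part (ii) you take a genuinely different route. The paper does \emph{not} use the distance function to the sublevel set $A_s$. Instead it builds an auxiliary convex Lipschitz function
\[
\tilde f(x)=\sup_{y\in B}\bigl\{f(y)+\langle\nabla f(y),x-y\rangle\bigr\},\qquad B=\{x:|\nabla f(x)|\dualnorm\le M_{p,q}\},
\]
which satisfies $\tilde f\le f$ everywhere and $\tilde f=f$ on $B$, and is $tM_{p,q}$-Lipschitz in the $|\cdot|_{\frac{1}{tp}\theta}$ norm for every $t\ge 1$. The two-sided concentration \eqref{eq:concentration-Lipschitz} is then applied \emph{directly to $\tilde f$}: first with $t\searrow\log(8/(2q-1))$ to locate $\Med\tilde f(X)$ (using $\PP(\tilde f(X)\ge 0)\ge q-1/2$), and then with $t=1$ to bound $\PP(\tilde f(X)<\Med\tilde f(X)-M_{p,q})\le 4e^{-p}$. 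Your argument is more geometric: you push the mass of $S$ outward via the distance function $g=d_p(\cdot,A_s)$, convert that into a lower bound on $\Med g$, and only then apply the lower tail. Both arguments are correct; the paper's linearization trick avoids the case analysis on $\max(1,\log(8/(2q-1))/p)$ and the boundary care at $\Med g=1$ that your approach requires (and which you correctly dispose of by noting the bound is vacuous for $p\le\log 4$). Conversely, your route stays entirely within the ``distance to convex set'' framework used in (i), so it is arguably more uniform across the three parts.
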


\begin{remark}\label{rem:wystarczy-upper-tail}
As will become clear in the proof, the part (i) of the above theorem holds in fact under one-sided concentration, i.e. it is enough to assume that
\begin{align}\label{eq:concentration-Lipschitz-1}
\PP(f(X) - \Med f(X) > \sup_{x\in \RR^n}|\nabla f(x)|\dualnorm) \le 4e^{-p}.
\end{align}

\end{remark}

Let us now illustrate the above theorem with a few concrete examples and a~corollary. In particular we will show what the norms $|\cdot|\dualnorm$ look like for different choices of the cost function $\theta$.

\begin{example}\label{ex:norms-and-concentration-1}
If $\theta(x) = c|x|^r$ for some $r \ge 1$ and $c > 0$, then $|x|\dualnorm = c^{-1/r}p^{1/r}|x|$ and \eqref{eq:concentration-Lipschitz} is equivalent to
\begin{align}\label{eq:simple-tail-condition}
\PP(|f(X) - \Med f(X)| \ge t) \le 4\exp(-c t^r)
\end{align}
for all 1-Lipschitz convex functions (in particular for $r=2$ we get the subgaussian concentration).
The first part of Theorem \ref{thm:concentration-nonlipschitz} gives then the following inequality for all (not necessarily Lipschitz) convex functions and $p\ge 1$,
\begin{displaymath}
  \Big\| \frac{(f(X)-\Med f(X))_+}{|\nabla f(X)|}\Big\|_p \le 3^{1/p} c^{-1/r} p^{1/r}.
\end{displaymath}
Thus by the $L^p$-Chebyshev inequality, with $p = ct^r/(3e)^r$ we obtain for $t\ge 0$,
\begin{align}\label{eq:first-example}
  \PP\Big(\frac{f(X)-\Med f(X)}{|\nabla f(X)|} \ge t\Big) \le e \exp\Big(-\frac{ct^r}{(3e)^r}\Big)
\end{align}
(the additional factor $e$ on the right-hand side is introduced artificially to encompass all $t\ge0$, also those for which $p < 1$; note that in this case the right-hand side exceeds one). We remark that similar self-normalized inequalities are known e.g. in the theory of empirical processes (see \cite{MR2073181}).

The lower tail inequalities gives
\begin{align}\label{eq:first-example-lower-tail}
  \PP(f(X) \le \Med f(X) - t) \le 4\exp\Big(-c\frac{t^r}{16^r (\EE|\nabla f(X)|)^r}\Big).
\end{align}
Moreover, using the full strength of part (ii) of Theorem \ref{thm:concentration-nonlipschitz}, one can replace $\EE|\nabla f(X)|$ by $4^{-1}M_{3/4}$, where $M_{3/4}$ is the $3/4$ quantile of $|\nabla f(X)|$.  Thus no integrability conditions on the gradient are in fact required.
\end{example}

\begin{remark}  Let us note that inequalities similar to~\eqref{eq:first-example-lower-tail} were previously known with the quantity $(\EE |\nabla f(X)|^2)^{1/2}$ instead of the quantile or $\EE |\nabla f(X)|$ (see \cite{MR1756011} or \cite[Chapter 3.3]{MR1767995}. Very recently, Paouris and Valettas \cite{2016arXiv161101723P} have proved that the standard Gaussian vector in $\RR^n$ satisfies a similar inequality (for $r=2$) with $\EE |f(X) - \Med f(X)|$ in place of $\EE |\nabla f(X)|$. Their proof uses in a crucial way isoperimetric properties of Gaussian measures. The version with $\EE |\nabla f(X)|$ follows simply by an application of the (1,1)-Poincar\'e inequality for the Gaussian measure, i.e. $\EE |f(X) - \Med f(X)| \le C \EE |\nabla f(X)|$ (see e.g. \cite{MR864714,MR2507637}). In fact the proof in \cite{2016arXiv161101723P} gives also inequalities in terms of quantiles of $|f(X) - M|$. We do not know if they are comparable to our estimates (specialized to the standard Gaussian measure) in terms of quantiles of $|\nabla f(X)|$.
\end{remark}

Note also that \eqref{eq:simple-tail-condition} for $r=1$ is a consequence of the convex Poincar\'e inequality (however we do not know if \eqref{eq:convex-Poincare} implies \eqref{eq:simple-tail-condition} with $c$ depending only on $\lambda$ and not on the dimension $n$, see Question \ref{q:lower-tail} below).

\begin{example}\label{ex:norms-and-concentration-2}
Let us now consider a measure $\mu$ on $\RR^n$ satisfying the convex Poincar\'e inequality with constant $\lambda$. Then, by Theorem \ref{thm:BL-convex} it satisfies the convex Bobkov-Ledoux inequality \eqref{eq:Bobkov-Ledoux-inequality-convex} with constants $C$ and $c$ depending only on $\lambda$. By the classical Herbst argument it follows  (see e.g. \cite{MR1440138,MR3456588}) that for each $N \ge 1$, if $X$ is an $Nn$-dimensional random vector with law $\mu^{\otimes N}$, then for any smooth convex function $f\colon \RR^{Nn} \to \RR$ and any $t > 0$,
\begin{align*}
&\PP( f(X) \ge \EE f(X) + t) \\
&\le 2\exp\bigl(-c'(\lambda)\min\Bigl\{\frac{t^2}{\sup_{x\in \RR^{Nn}} |\nabla f(x)|^2},\frac{t}{\sup_{x\in \RR^n} \max_{i\le N} |\nabla_i f(x)|}\Bigr\}\bigr),
\end{align*}
where for $x = (x_1,\ldots,x_N) \in (\RR^n)^N =  \RR^{Nn}$, $\nabla_i f(x)$ denotes the partial gradient with respect to $x_i$.

Moreover, by the Poincar\'e inequality
\begin{equation*}
|\EE f(X) - \Med f(X)| \le 1/\sqrt{\lambda} \sup_{x\in \RR^{Nn}} |\nabla f(x)|,
\end{equation*}
 which at the cost of changing the constant allows to replace the mean by the median in the above inequality.
Thus we obtain that for some constant $c''(\lambda)$ and $p > 0$,
\begin{displaymath}
\PP\bigl( f(X) \ge \Med f(X) + c''(\lambda) \sup_{x\in \RR^{Nn}} (\sqrt{p}|\nabla f(x)| + p\max_{i\le N} |\nabla_i f(x)|)\bigr) \le 2 e^{-p}.
\end{displaymath}
It is easy to see that up to universal constants $c''(\lambda) (\sqrt{p}|x| +p\max_{i\le N} |x_i|)$ is equivalent to $|x|\dualnorm$, where
\begin{displaymath}
\theta(x) = \sum_{i=1}^N \min\Bigl\{\Big|\frac{x_i}{c''(\lambda)}\Big|^2,\Big|\frac{x_i}{c''(\lambda)}\Big|\Bigr\}.
\end{displaymath}
More precisely
\begin{displaymath}
|x|\dualnorm \le c''(\lambda) \bigl(\sqrt{p}|x| +p\max_{i\le N} |x_i|\bigr) \le 2|x|\dualnorm.
\end{displaymath}

Thus, the first part of Theorem~\ref{thm:concentration-nonlipschitz} together with Remark~\ref{rem:wystarczy-upper-tail} gives for arbitrary smooth convex function $f$ on $\RR^{Nn}$, the inequality
    \begin{displaymath}
      \Big\|\frac{(f(X)-\Med f(X))_+}{\sqrt{p}|\nabla f(X)| + p \max_{i\le N}|\nabla_i f(X)|}\Big\|_p \le c'''(\lambda),
    \end{displaymath}
for $p\ge 1$, where $c'''(\lambda)$ depends only on $\lambda$. By Chebyshev's inequality this implies that
\begin{displaymath}
  \PP\Big(\frac{(f(X)-\Med f(X))_+}{\sqrt{t}|\nabla f(X)| + t \max_{i\le N}|\nabla_i f(X)|} \ge ec'''(\lambda)\Big) \le e^{-t}
\end{displaymath}
for $t\ge 1$ (note that contrary to \eqref{eq:first-example} this time $t$ cannot be removed from the denominator).

As for the lower tail, by Theorem \ref{thm:main}, Remark \ref{rem:dependence-of-constants}, Lemma \ref{le:inf-convolution} and tensorization properties of infimum convolution inequalities (see Lemma 5 in \cite{MR1097258}) we obtain that $X$ satisfies \eqref{eq:inf_conv_again} and thus also \eqref{eq:concentration-Lipschitz} with $\theta(x) = K(\lambda,n) \sum_{i=1}^N \min(|x_i|^2,|x_i|)$, where $K(\lambda,n)$ depends only on $\lambda$ and the dimension $n$. Thus, by the second part of Theorem \ref{thm:concentration-nonlipschitz},
\begin{displaymath}
\PP(f(X) \le \Med f(X) - K'(\lambda,n)\bigl[\sqrt{p}\EE|\nabla f(X)| + p \EE\max_{i\le N}|\nabla_i f(X)|\bigr] ) \le 4e^{-p},
\end{displaymath}
or equivalently (up to constants depending only on $\lambda,n$),
\begin{multline*}
  \PP(f(X) \le \Med f(X) - t) \\
  \le 4\exp\Big( -K''(\lambda,n) \min \Bigl\{ \frac{t^2}{(\EE|\nabla f(X)|)^2}, \frac{t}{\EE\max_{i\le N}|\nabla_i f(X)|} \Bigr\}\Big).
\end{multline*}

We stress that all the above inequalities are dimension-free in the sense that the constants do not depend on the number $N$ but just on the initial dimension $n$ (cf. Remark \ref{re:1.5}).
\end{example}

\begin{example}\label{ex:norms-and-concentration-3}
 Finally, we remark that general cost functions $\theta$ lead to other concentration profiles, which have been studied in the literature. One can for instance consider products of measures on $\RR$, satisfying \eqref{eq:inf_conv_again} with
 \begin{equation*}
 \theta(x) = c(|x|^2\ind{|x|\le 1} + |x|^r\ind{|x|>1})
 \end{equation*}
 for $r \ge 1$ (such measures are characterized thanks to results in \cite{gozlan_new}). If we denote for $x \in \RR^n$, $|x|_r = (|x_1|^r+\cdots+|x_n|^r)^{1/r}$ and let $r^\ast$ be the H\"older conjugate of $r$, then  such costs correspond for $r \in [1,2]$ to norms of the form $|x|\dualnorm \simeq \sqrt{p}|x| + p^{1/r}|x|_{r^\ast}$ (the case $r=1$ has been discussed above), while for $r > 2$ to
 \begin{equation*}
 |x|\dualnorm\simeq p^{1/r}|(x^\ast_i)_{i=1}^p|_{r^\ast} + \sqrt{p}|(x_i^\ast)_{i=p+1}^n|,
 \end{equation*}
 where $(x_i^\ast)_{i=1}^n$ is the non-increasing rearrangement of the sequence $(|x_i|)_{i=1}^n$.
\end{example}

We will now present a corollary to Theorem \ref{thm:concentration-nonlipschitz}, providing concentration inequalities for non-Lipschitz convex functions, in the spirit of recent results due to Bobkov, Nayar, and Tetali~\cite{BobkovNayarTetali}.

\begin{corollary}\label{cor:nonlipschitz} Under the assumptions of Theorem \ref{thm:concentration-nonlipschitz} for all convex functions $f\colon \RR^n \to \RR$,
\begin{displaymath}
\PP(f(X)-\Med f(X) \ge t) \le \inf_{p\ge 1} \bigl\{ e^{-p} +  \PP\bigl(|\nabla f(X)|\dualnorm \ge t/(3e)\bigr)\bigr\}.
\end{displaymath}
Moreover, for any $p\ge 1$,
\begin{align}\label{eq:moments-quantiles}
  \PP\big(|f(X) - \Med f(X)| \ge 3e^2\big\| |\nabla f(X)|\dualnorm \big\|_p\big) \le 6e^{-p}
\end{align}
\end{corollary}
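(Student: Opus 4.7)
My plan is to deduce both inequalities from parts (i) and (ii) of Theorem~\ref{thm:concentration-nonlipschitz} via Chebyshev's inequality and simple event-splitting, with no new analytic input.

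For the first inequality, I would start from the elementary decomposition
\begin{align*}
\PP(f(X)-\Med f(X) \ge t) &\le \PP\bigl(f(X)-\Med f(X) \ge t,\; |\nabla f(X)|\dualnorm < t/(3e)\bigr) \\
&\quad + \PP\bigl(|\nabla f(X)|\dualnorm \ge t/(3e)\bigr).
\end{align*}
On the event in the first probability, if $|\nabla f(X)|\dualnorm>0$ the ratio $(f(X)-\Med f(X))_+/|\nabla f(X)|\dualnorm$ strictly exceeds $3e$; if $|\nabla f(X)|\dualnorm=0$ then (since $f$ is convex and smooth) $\nabla f(X)=0$ makes $X$ a global minimizer of $f$, so $f(X)\le \Med f(X)$ and the event is empty for $t>0$. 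Thus, using the convention $0/0=0$, the first term is bounded by $\PP\bigl((f(X)-\Med f(X))_+/|\nabla f(X)|\dualnorm > 3e\bigr)$. Applying Chebyshev in $L^p$ together with \eqref{eq:selfnormalized-moment} yields
\[
\PP\Big(\frac{(f(X)-\Med f(X))_+}{|\nabla f(X)|\dualnorm} > 3e\Big) \le \frac{3}{(3e)^p} = 3^{1-p} e^{-p} \le e^{-p}
\]
for every $p\ge 1$, which produces the first displayed estimate.

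For the second inequality I would apply the first one with $t = 3e^2 \big\||\nabla f(X)|\dualnorm\big\|_p$, reducing the upper tail to
\[
\PP\bigl(|\nabla f(X)|\dualnorm \ge e\big\||\nabla f(X)|\dualnorm\big\|_p\bigr) \le e^{-p},
\]
again by Chebyshev. This gives $2e^{-p}$ for the upper tail. For the lower tail I would simply invoke \eqref{eq:nonlipschitz-lower-tail}: since $16 \EE|\nabla f(X)|\dualnorm \le 16 \big\||\nabla f(X)|\dualnorm\big\|_p \le 3e^2 \big\||\nabla f(X)|\dualnorm\big\|_p$ by Jensen (using $16 \le 3e^2$), the bound $4e^{-p}$ carries over at the larger threshold. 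A union bound then gives $6e^{-p}$.

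There is no real obstacle here; the one mildly delicate point is handling the degenerate case $|\nabla f(X)|\dualnorm = 0$ in the conditioning step of the first inequality, which is what the convention $0/0=0$ and convexity of $f$ jointly take care of. Everything else is Chebyshev plus Jensen applied to the quantitative moment bounds already established in Theorem~\ref{thm:concentration-nonlipschitz}.
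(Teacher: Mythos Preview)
Your proof is correct and follows essentially the same route as the paper: for the first inequality the paper also reduces to Chebyshev on the self-normalized ratio via \eqref{eq:selfnormalized-moment}, and for the second it likewise specializes $t = 3e^2\||\nabla f(X)|\dualnorm\|_p$ and combines Chebyshev with \eqref{eq:nonlipschitz-lower-tail}. Your treatment is in fact slightly more explicit than the paper's (the degenerate case $|\nabla f(X)|\dualnorm=0$, the bound $3^{1-p}e^{-p}\le e^{-p}$, and the check $16\le 3e^2$), but the argument is the same.
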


Let us  note that inequalities of the form \eqref{eq:moments-quantiles} have been obtained in \cite{ESAIM} for all smooth functions of random vectors satisfying modified log-Sobolev inequalities (assumed to hold for all smooth functions). Therein, the function $\theta$ had to satisfy some appropriate growth condition.

\begin{example}
In particular for $\theta(x) = c|x|^2$, the above corollary gives
\begin{displaymath}
  \PP(f(X)-\Med f(X) \ge t) \le \inf_{p\ge1 } \bigl\{e^{-p} +  \PP(\sqrt{p/c}|\nabla f(X)| \ge t/(3e))\bigr\}.
\end{displaymath}
By substituting $p = \frac{c t^2}{(3 e)^2 L^2}$ and adjusting the constant we obtain
\begin{align}\label{eq:subgaussian-nonlipschitz-upper-tail}
  \PP(f(X)-\Med f(X) \ge t) \le \inf_{L>0} \bigl\{ 2e^{-c'\frac{t^2}{L^2}} +  \PP(|\nabla f(X)| \ge L)\bigr\},
\end{align}
where $c'$ is positive and depends only on $c$. The factor 2 in the above inequality is introduced for notational simplicity to allow the whole range of $L > 0$ in the infimum (note that for large $L$ we have $p < 1$ and we cannot apply Corollary \ref{cor:nonlipschitz}, on the other hand the above inequality becomes then trivial, as the right-hand side exceeds one).

Recall also the second part of Theorem \ref{thm:concentration-nonlipschitz} which for $q = 3/4$ gives in this case
\begin{align}\label{eq:subgaussian-nonlipschitz-lower-tail}
  \PP(f(X) \le \Med f(X) - t) \le 4\exp\bigl(-c''\frac{t^2}{M_{3/4}^2}\bigr),
\end{align}
where $M_{3/4} = \inf\{x\in\RR^n \colon \PP(|\nabla f(X)| \le x) \ge 3/4\}$ and $c''$ again depends only on~$c$.
\end{example}

The above inequalities should be compared with a recent result in \cite{BobkovNayarTetali}, which asserts that for some constant positive $c'''$ depending only on $c$,
\begin{align}\label{eq:BNT}
\PP(|f(X) - f(Y)| \ge t) \le 2 \inf_{L \ge \Med |\nabla f(X)|} \bigl\{ e^{-c'''\frac{t^2}{L^2}} + \PP(|\nabla f(X)| \ge L) \bigr\},
\end{align}
where $Y$ is an independent copy of $X$.

It is not difficult to see  that in the regime of $t$ for which the above inequalities are of interest, i.e. the right-hand sides are small, \eqref{eq:subgaussian-nonlipschitz-upper-tail} gives estimates on the upper tail which (up to numerical constants) are comparable to those implied by \eqref{eq:BNT}, whereas for the lower tail, the inequality \eqref{eq:subgaussian-nonlipschitz-lower-tail} improves over \eqref{eq:BNT}.

\begin{example}
Consider now $\theta(x) = \sum_{i=1}^N \min(|x_i/c|^2,|x_i/c|)$, which we have already discussed in Example~\ref{ex:norms-and-concentration-2}.
From Corollary \ref{cor:nonlipschitz} we get
\begin{displaymath}
\PP(f(X)-\Med f(X) \ge t) \le \inf_{p\ge 1} \bigl\{ e^{-p} +  \PP(\sqrt{p}|\nabla f(X)|+ p\max_{i\le N}|\nabla_i f(X)|\ge t/c')\bigr\}.
\end{displaymath}
By substituting $p = \min\{\frac{t^2}{(2c')^2L^2},\frac{t}{2c'M}\}$ and using the union bound we obtain
\begin{multline*}
\PP(f(X)-\Med f(X) \ge t) \\
\le\inf_{L,M>0} \Big\{ 2\exp\Big(-c''\min\bigl\{\frac{t^2}{L^2},\frac{t}{M}\bigr\}\Big) +  \PP\bigl(|\nabla f(X)| \ge L\bigr)\\
+\PP\bigl(\max_{i\le N}|\nabla_i f(X)|\ge M\bigr)\Big\},
\end{multline*}
with $c''$ depending only on $c$. As in the preceding example, the factor 2 is introduced to allow for all positive values of $L,M$.
\end{example}

\begin{remark}
Let us note that another way of obtaining estimates on the upper tail of non-Lipschitz functions under the convex Poincar\'e inequality is to use the estimates  \eqref{eq:Poincare-moments} and \eqref{eq:Poincare-moments-tail}. By approximating arbitrary convex functions with Lipschitz ones we can easily see that they hold in fact for all convex functions. Thus, if one controls the moments of $|\nabla f(X)|$, one can obtain tail estimates beyond the Lipschitz case. Such inequalities are however different than those of the above example as they are of exponential type and not of mixed exponential or Gaussian type. On the other hand, the weak transportation inequality with cost $\theta(x) = c\sum_{i=1}^n \min(|x_i|^2,|x_i|)$ arises usually as a consequence of tensorization, so in order to apply it we need some additional product structure of the measure.
\end{remark}

\subsection{Proofs of Theorem \ref{thm:concentration-nonlipschitz} and Corollary \ref{cor:nonlipschitz}}\label{sec:proof-nonlipschitz}

\begin{proof}[Proof of Theorem \ref{thm:concentration-nonlipschitz}]
Let us start with (i), the proof of which is quite similar to the proof of Corollary \ref{cor:concentration-Lipschitz}.
Let us again define $A = \{x\in\RR^n\colon f(x) \le \Med f(X)\}$.
Using \eqref{eq:norm-comparison} and \eqref{eq:upper-bound-median-gradient}, we can write for $t \ge 1$,
\begin{displaymath}
\frac{f(X) - \Med f(X)}{t |\nabla f(X)|\dualnorm} \le \frac{f(X) - \Med f(X)}{|\nabla f(X)|\dualnorm[tp]} \le \inf_{y\in A}|X - y|_{\frac{1}{tp}\theta}.
\end{displaymath}
Hence for $t \ge 1$,
\begin{displaymath}
\PP\Big( \frac{f(X) - \Med f(X)}{|\nabla f(X)|\dualnorm }  > t \Big) \le \PP(\inf_{y\in A} |X - y|_{\frac{1}{tp}\theta} > 1) \le 4 e^{-pt},
\end{displaymath}
where we used the fact that the function $g(x) = \inf_{y\in A} |x - y|_{\frac{1}{tp}\theta}$ is convex, 1-Lipschitz with respect to $|\cdot|_{\frac{1}{tp}\theta}$ and $\Med g(X)=0$, together with Corollary \ref{cor:concentration-Lipschitz} and Remark \ref{rem:smooth-to-lipschitz}.
We can now integrate by parts and get
\begin{displaymath}
\EE \Big| \frac{(f(X) - \Med f(X))_+}{|\nabla f(X)|\dualnorm}\Big|^p \le 1 + 4\int_1^\infty pt^{p-1}e^{-pt}dt \leq 1 + 4\int_1^\infty e^{-t}dt \leq 3
\end{displaymath}
(the integrand is pointwise non-increasing with respect to $p \ge 1$, as the computation of the derivative with respect to $p$ reveals),  which proves the first part of the theorem.

Let us now pass to the second part. Assume without loss of generality that $\Med f(X) = 0$. Consider the set  $B = \{x\in\RR^n\colon |\nabla f(x)|\dualnorm\le M_{p,q}\}$. By the definition of $M_{p,q}$, we have $\PP(X \in B) \ge q$. Let $\tilde{f}\colon \RR^n \to \RR$ be defined as
\begin{displaymath}
\tilde{f}(x) = \sup_{y \in B} \bigl\{ f(y) + \langle \nabla f(y),x-y\rangle \bigr\}.
\end{displaymath}
Then $\tilde{f}$ is convex, moreover by convexity of $f$ we have $\tilde{f} \le f$ pointwise and $\tilde{f} = f$ on $B$. By the definition of the set $B$ and inequality~\eqref{eq:norm-comparison}, for any $t\ge 1$ all linear functionals $x \mapsto \langle \nabla f(y),x\rangle$, $y \in B$, are $(tM_{p,q})$-Lipschitz with respect to $|\cdot|_{\frac{1}{tp}\theta}$ and therefore so is $\tilde{f}$. By Corollary \ref{cor:concentration-Lipschitz} and Remark \ref{rem:smooth-to-lipschitz} this implies that for any $t \ge 1$,
\begin{align}\label{eq:concentration-modified-function}
\PP(|\tilde{f}(X) - \Med \tilde{f}(X)| > tM_{p,q}) \le 4 e^{-tp}.
\end{align}
We also have $\PP(\tilde{f}(X) \ge 0) \ge \PP(f(X) \ge 0 \;\textrm{and}\; X \in B) \ge q - 1/2$. Therefore, the above inequality applied with $t \searrow \log(8/(2q-1))> 1$ gives
\begin{displaymath}
\Med \tilde{f}(X) + M_{p,q}  \log(8/(2q-1)) \ge 0,
\end{displaymath}
 which by another application of \eqref{eq:concentration-modified-function} implies
 \begin{displaymath}
 \PP\Big(f(X) < - M_{p,q} \bigl(1+ \log(8/(2q-1))\bigr)\Big)\le \PP(\tilde{f}(X) < \Med \tilde{f}(X) - M_{p,q}) \le 4e^{-p}.
 \end{displaymath}
This proves the first inequality of part (ii).

The second inequality of part (ii) follows from the first one by specializing to $q = 3/4$, $M_{p,q} = 4\EE |\nabla f(X)|\dualnorm$ and some elementary calculations.

As for part (iii), using again \eqref{eq:norm-comparison} and \eqref{eq:nonlipschitz-lower-tail}, we get for $t \ge 16\EE |\nabla f(X)|\dualnorm$
\begin{displaymath}
\PP( f(X) - \Med f(X) \le - t) \le 4\exp\Big(-\frac{pt}{16\EE |\nabla f(X)|\dualnorm}\Big).
\end{displaymath}

Now, again by integration by parts,
\begin{align*}
  \EE &(f(X) - \Med f(X))_-^p \\
  &\le (16\EE |\nabla f(X)|\dualnorm)^p + 4p\int_{16\EE |\nabla f(X)|\dualnorm}^\infty t^{p-1}\exp\Big(-\frac{pt}{16\EE |\nabla f(X)|\dualnorm}\Big)dt\\
  & \le 3(16 \EE |\nabla f(X)|\dualnorm)^p,
  \end{align*}
  which ends the proof.
\end{proof}

\begin{proof}[Proof of Corollary \ref{cor:nonlipschitz}]
To prove the first inequality it is enough to note that if $|\nabla f(X)|\dualnorm \le t/(3e)$ and $f(X) - \Med f(X) \ge t$, then
\begin{displaymath}
  Z:= \frac{(f(X)-\Med f(X))_+}{|\nabla f(X)|\dualnorm} \ge 3 e \ge e\|Z\|_p,
\end{displaymath}
where the last inequality follows from \eqref{eq:selfnormalized-moment}. The assertion follows thus from Chebyshev's inequality: $\PP(Z \ge e\|Z\|_p) \le e^{-p}$.

As for the second inequality, we apply the first one with $t = 3e^2 \||\nabla f(X)|\dualnorm\|_p$ and combine it with the estimate \eqref{eq:nonlipschitz-lower-tail}.
\end{proof}

\section{Further questions}\label{sec:final}

Let us conclude with some open questions, which seem natural in view of our results.

As already mentioned in the introduction, in our proof of the implication
\begin{align*}
&\mu \; \text{satisfies the convex Poincar\'e inequality with constant}\; \lambda \\
&\implies  \mu \; \text{satisfies  the inequality }\; \Tbar_{\theta_{C,D}}\;  \text{for some} \; C,D,
\end{align*}
the constants $C,D$ do not depend just on $\lambda$, but also on certain quantiles of the measure $\mu$. In fact, the issue comes from the inequality $\Tbar^+$, since the constants in $\Tbar^-$ do depend only on $\lambda$ (see~Corollary~\ref{cor:Poincare-transport-minus}). This gives rise to our first question.

\begin{question} Does the Poincar\'e inequality with constant $\lambda$ imply the weak transportation inequality $\Tbar_{\theta_{C,D}}$ with constants $C,D$ depending only on $\lambda$?
\end{question}

The inspection of our proof shows that in order to answer the above question in the affirmative, it is enough to remove the restriction on $t$ in Lemma \ref{le:conc-Med-concave-n}.
An improved version of this lemma, valid for all $t>0$ would follow by part (ii) of Theorem \ref{thm:concentration-nonlipschitz} provided that one can show that the convex Poincar\'e inequality with constant $\lambda$ implies subexponential concentration for convex 1-Lipschitz functions, with constants depending only on $\lambda$. The problem lies in the lower-tail (as the upper one is handled by Proposition \ref{prop:Poincare-upper-tail}). More precisely, we have the following result.

\begin{theorem} Assume that $\mu$ is a probability measure on $\RR^n$, satisfying the convex Poincar\'e inequality \eqref{eq:convex-Poincare} with constant $\lambda$ and $c$ is a positive constant, such that for all $1$-Lipschitz convex functions $f\colon \RR^n\to \RR$ and all $t > 0$,
\begin{displaymath}
\mu\bigl(\{x\in\RR^n\colon f(x) \le \Med_\mu f  - t\}\bigr) \le 2 \exp(-c t).
\end{displaymath}
Then $\mu$ satisfies the inequality $\Tbar_{\theta_{C,D}}$ with $C, D$ depending only on $\lambda$ and $c$.
\end{theorem}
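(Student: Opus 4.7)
The plan is to strengthen the chain
\[
\text{Lemma \ref{le:conc-Med-concave-n}} \longrightarrow \text{Theorem \ref{thm:BL-concave-n}} \longrightarrow \text{Lemma \ref{lem:logSob-transport}} \longrightarrow \text{Corollary \ref{cor:Poincare-transport-plus}}
\]
so that all constants appearing depend only on $\lambda$ and $c$. Since Corollary~\ref{cor:Poincare-transport-minus} already provides $\Tbar^-_{\theta_{C,D}}$ with constants depending only on $\lambda$, this will immediately deliver the claimed $\Tbar_{\theta_{C,D}}$.

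The key new input is that combining Proposition~\ref{prop:Poincare-upper-tail} (upper tail from the convex Poincar\'e inequality) with the hypothesis (lower tail) yields a two-sided bound: for $c' = \min(c,\, 0.52\sqrt{\lambda})$ and every convex $1$-Lipschitz $f\colon\RR^n\to\RR$,
\[
\PP(|f(X) - \Med f(X)| > t) \le 10 \exp(-c't), \quad t \ge 0.
\]
Up to a harmless absolute factor, this is exactly assumption~\eqref{eq:concentration-Lipschitz} with cost function $\theta(x) = c'|x|$, for which one computes $|x|\dualnorm = p|x|/c'$. I would then apply part~(ii) of Theorem~\ref{thm:concentration-nonlipschitz} with $q = 3/4$ and $M_{p,q} = 4\,\EE|\nabla f(X)|\dualnorm$ (after a standard Gaussian-convolution approximation to pass from smooth to arbitrary convex Lipschitz functions, exactly as in Proposition~\ref{prop:Poincare-upper-tail}) to obtain, for every convex $f\colon\RR^n\to\RR$ and every $t \ge 0$,
\[
\PP(f(X) \le \Med f(X) - t) \le K \exp\!\Bigl( -\frac{c' t}{K'\,\EE|\nabla f(X)|} \Bigr),
\]
with absolute constants $K, K'$. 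This is the desired strengthening of Lemma~\ref{le:conc-Med-concave-n}: it holds for \emph{all} $t \ge 0$ (no threshold of the form $t > 32M\EE|\nabla f|$) and its constants depend only on $\lambda$ and $c$.

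Next, I would re-examine the proof of Theorem~\ref{thm:BL-concave-n} with this improved tail in place of Lemma~\ref{le:conc-Med-concave-n}. Integrating the new bound from $0$ to $\infty$ yields $\EE e^{2g_+(X)} \le D_1$ (convergence requires restricting the upper bound on $|\nabla f|$ below a threshold of the form $c'/(2K')$, which still depends only on $\lambda, c$) and $\EE g_+(X)^4 \le D_2 (\EE|\nabla f(X)|)^4$, with $D_1, D_2$ depending only on $\lambda, c$. Since Proposition~\ref{prop:2} is unaffected and needs no modification, the remainder of the proof runs verbatim and delivers Theorem~\ref{thm:BL-concave-n} with a constant depending only on $\lambda$ and $c$, and in particular \emph{independent of the quantile $M$}. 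Plugging this strengthened log-Sobolev inequality into Lemma~\ref{lem:logSob-transport} upgrades Corollary~\ref{cor:Poincare-transport-plus} to produce $\Tbar^+_{\theta_{C,D}}$ with constants depending only on $\lambda$ and $c$, which combined with Corollary~\ref{cor:Poincare-transport-minus} finishes the proof.

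The main obstacle is essentially bookkeeping: one must verify that no implicit dependence on $M$ sneaks into the proof of Theorem~\ref{thm:BL-concave-n} beyond its single use of Lemma~\ref{le:conc-Med-concave-n}, and carry out carefully the smooth-to-Lipschitz approximation needed to invoke Theorem~\ref{thm:concentration-nonlipschitz} (part~(ii) of which was stated for smooth convex functions). Both are routine once the chain above is set up.
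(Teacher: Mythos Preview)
Your proposal is correct and follows precisely the route the paper indicates just before the theorem: use the hypothesis together with Proposition~\ref{prop:Poincare-upper-tail} to obtain two-sided subexponential concentration, invoke part~(ii) of Theorem~\ref{thm:concentration-nonlipschitz} to remove the threshold in Lemma~\ref{le:conc-Med-concave-n}, and then rerun the chain through Theorem~\ref{thm:BL-concave-n}, Lemma~\ref{lem:logSob-transport}, and Corollary~\ref{cor:Poincare-transport-plus} with constants now depending only on $\lambda$ and $c$. The paper does not spell out a proof beyond this outline, and your elaboration fills in exactly the expected details.
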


This motivates the following question, which is clearly of interest also in its own right.

\begin{question}\label{q:lower-tail} Does the convex Poincar\'e inequality \eqref{eq:convex-Poincare} with constant $\lambda$ imply subexponential estimates for the lower-tail of convex 1-Lipschitz functions, with constants depending only on $\lambda$? Specifically, is it true that whenever $\mu$ is a probability measure on $\RR^n$ satisfying \eqref{eq:convex-Poincare}, then for every convex $1$-Lipschitz function $f\colon \RR^{n} \to \RR$,
\begin{displaymath}
\mu\bigl(\{x\in\RR^n\colon f(x) \le \Med_\mu f  - t\}\bigr) \le 2\exp(-c(\lambda) t),
\end{displaymath}
where the constant $c(\lambda)$ depends only on $\lambda$?
\end{question}

The inequality provided by Lemma \ref{le:conc-Med-concave-n} introduces an additional dependence on $n$, which carries over to the dependence of constants in Theorem \ref{thm:main}. Let us point out that all the proofs of lower-tail estimates based on the Poincar\'e inequality and available for the category of all smooth functions, which we have been able to find in the literature, seem to break down in the convex setting (see e.g. the arguments in \cite{MR708367,MR1258492,MR3311918}).

\appendix

\section{Facts related to Hamilton-Jacobi equations}\label{sec:HJ}

We will now present some basic properties of Hamilton-Jacobi equations related to infimum convolution operators with the cost $\theta(x) = \alpha(|x|)$, where $\alpha$ is given by \eqref{eq:cost-alpha}, which have been exploited in the proof of Lemma \ref{lem:logSob-transport}. We remark that all the facts we will rely on are quite standard, however in the literature they are usually considered under slightly different sets of assumptions, which makes it difficult to find an off the shelf result applicable to our situation. We will briefly indicate how the reasonings from \cite[Chapter 3]{evans} can be modified to yield the properties we need. Alternatively, as in \cite{gozlan_new}, one could rely on modification of the results from \cite{MR3186934}, where the theory of Hamilton-Jacobi equations is extended to the setting of metric spaces.

\begin{proposition} \label{prop:HJ}Let $C,L$ be positive constants and let $\alpha$ be defined by \eqref{eq:cost-alpha}. Assume that $f\colon \RR^n \to \RR$ is either bounded from below or $L$-Lipschitz and let $u\colon (0,\infty)\times \RR^n \to \RR$ be given by
$u(t,x) = Q_t^\alpha f(x)$, where
\begin{equation*}
Q_t^{\alpha} f(x) = \inf_{y\in\RR^n} \{ f(y) + t\alpha(|x-y|/t) \},\quad t>0.
\end{equation*}
Then the following conditions hold.
\begin{itemize}
\item[(a)] For every $s,t > 0$ and every $x \in \RR^n$, $Q_tQ_s f(x) = Q_{t+s} f(x)$.

\item[(b)] The function $u$ is Lipschitz on $(0,\infty)\times \RR^n$,

\item[(c)] At every point $(t,x)\in(0,\infty)\times\RR^n$ of differentiability of $u$, one has
\begin{displaymath}
\frac{d}{dt} u(t,x) + \alpha^\ast(|\nabla_x u(t,x)|) = 0,
\end{displaymath}
where $\alpha^\ast$ is the Legendre transform of $\alpha$, given explicitly by the formula
\begin{equation*}
\alpha^*(s) =\begin{cases}
C|s|^2 & \text{for } |s| \le L,\\
+\infty  & \text{for } |s|>L.
\end{cases}
\end{equation*}
\end{itemize}
\end{proposition}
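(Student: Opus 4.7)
The plan is to establish the three properties in order, since (a) will serve as the main tool for (b) and (c). A preliminary computation gives the Legendre transform $\alpha^*(\xi) = C|\xi|^2$ for $|\xi| \le L$ and $+\infty$ otherwise (this already verifies the explicit formula displayed in (c) and supplies the numerical value $\alpha^*(L) = CL^2$ used below). Note also that $\alpha$ is $L$-Lipschitz on $[0,\infty)$ since $|\alpha'(s)|\le L$ everywhere, and that $x\mapsto \alpha(|x|)$ is convex on $\RR^n$ (as $\alpha$ is convex nondecreasing on $[0,\infty)$).

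For (a), the inequality $Q_{t+s} f \le Q_tQ_s f$ comes from the explicit intermediate point $z = (sx+ty)/(s+t)$: for this choice one has $(z-y)/s = (x-z)/t = (x-y)/(s+t)$, so the two cost terms collapse to $(s+t)\alpha(|x-y|/(s+t))$. The reverse inequality follows by writing $(x-y)/(s+t)$ as the convex combination $\tfrac{s}{s+t}\cdot\tfrac{z-y}{s} + \tfrac{t}{s+t}\cdot \tfrac{x-z}{t}$ and applying convexity of $x\mapsto\alpha(|x|)$ to obtain, for arbitrary $y$ and $z$,
\begin{equation*}
(s+t)\alpha\bigl(|x-y|/(s+t)\bigr) \le s\alpha\bigl(|z-y|/s\bigr) + t\alpha\bigl(|x-z|/t\bigr).
\end{equation*}

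For (b), spatial Lipschitz continuity of $u(t,\cdot)$ with constant $L$ is immediate (uniformly in $t$): since $\alpha$ is $L$-Lipschitz, for every $y$ we have $|t\alpha(|x-y|/t) - t\alpha(|x'-y|/t)|\le L|x-x'|$, and taking infima yields the bound. Temporal Lipschitz continuity invokes (a): taking $y = x$ in $u(t+h,x) = Q_h u(t,\cdot)(x)$ gives $u(t+h,x)\le u(t,x)$, while combining the semigroup identity with the spatial Lipschitz bound $u(t,y)\ge u(t,x) - L|x-y|$ and the substitution $s = |x-y|/h$ yields
\begin{equation*}
u(t+h, x) \ge u(t,x) - h\sup_{s\ge 0}\{Ls - \alpha(s)\} = u(t,x) - h\alpha^*(L) = u(t,x) - hCL^2.
\end{equation*}

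For (c), the semigroup property combined with the substitution $w = z/h$ gives
\begin{equation*}
\frac{u(t+h, x) - u(t,x)}{h} = \inf_{w\in\RR^n}\Bigl\{\frac{u(t, x-hw) - u(t,x)}{h} + \alpha(w)\Bigr\},
\end{equation*}
and I will aim to pass to the limit $h\to 0^+$ inside the infimum. The main technical obstacle is precisely this exchange of $\inf$ with $\lim$, which at a single differentiability point requires a coercivity argument. The $L$-Lipschitz bound from (b) gives $\frac{u(t,x-hw) - u(t,x)}{h} \ge -L|w|$; for $|w|\ge 2CL$ one has $\alpha(w) = L|w| - CL^2$, so outside the ball $|w|\le 2CL$ the expression inside the infimum stays $\ge -CL^2$, while the trivial candidate $w=0$ contributes $0$. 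Hence the infimum can be restricted to a fixed compact set $\{|w|\le R\}$, on which differentiability at $(t,x)$ upgrades pointwise convergence $\frac{u(t,x-hw) - u(t,x)}{h}\to -\langle \nabla_x u(t,x), w\rangle$ to uniform convergence in $w$. Passing to the limit yields $\partial_t u(t,x) = \inf_w\{-\langle \nabla_x u(t,x), w\rangle + \alpha(w)\} = -\alpha^*(\nabla_x u(t,x))$, which equals $-\alpha^*(|\nabla_x u(t,x)|)$ by radial symmetry of $\alpha$. The analogous left-difference-quotient argument handles $h\to 0^-$.
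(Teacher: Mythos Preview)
Your overall strategy matches the paper's: both rely on the semigroup property (a), the $L$-Lipschitz bound in $x$, and a reduction of the infimum to a fixed compact set; the paper simply cites Evans for most steps and fills in only the compactness argument, whereas you write everything out explicitly.

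There is, however, a genuine gap in your compactness step in (c). From ``the expression is $\ge -CL^2$ for $|w|>2CL$'' together with ``$w=0$ contributes $0$'' you \emph{cannot} conclude that the infimum over all $w$ coincides with the infimum over $\{|w|\le R\}$: knowing a lower bound $-CL^2$ outside the ball and an upper bound $0$ at the center tells you only that the global infimum lies in $[-CL^2,0]$, not that it is attained (or approached) inside any bounded set. Without this restriction, the passage from ``pointwise convergence'' to ``uniform convergence in $w$'' fails, and the interchange of $\lim$ and $\inf$ is unjustified for the lower bound $\partial_t u \ge -\alpha^\ast(|\nabla_x u|)$ (the upper bound $\partial_t u \le -\alpha^\ast(|\nabla_x u|)$ follows already by fixing $w$ and letting $h\to 0$).

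The fix is exactly what the paper does (in the equivalent formulation with $y$ instead of $w$): for $|w|>2CL$ set $w' = 2CL\,w/|w|$ and check that
\[
\frac{u(t,x-hw)-u(t,x)}{h} + \alpha(|w|) \;\ge\; \frac{u(t,x-hw')-u(t,x)}{h} + \alpha(|w'|),
\]
which reduces to $u(t,x-hw)-u(t,x-hw') \ge -hL(|w|-2CL)$ and follows from the $L$-Lipschitz property of $u(t,\cdot)$ combined with the linear form $\alpha(s)=Ls-CL^2$ for $s\ge 2CL$. This shows the infimum over all $w$ equals the infimum over $\{|w|\le 2CL\}$, after which your uniform-convergence argument goes through. (Also note that once you have computed the $h\to 0^+$ limit, the $h\to 0^-$ case is automatic: differentiability of $u$ at $(t,x)$ already guarantees both one-sided limits agree.)
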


\begin{proof}[Sketch of proof] Let us note that if $f$ is bounded from below or $L$-Lipschitz, then $Q_t f$ is well defined.

\emph{Ad (a).} To show the semigroup property one can repeat the argument from the proof of \cite[Chapter 3.3.2, Lemma 1]{evans}, however in our setting one needs to work with infima rather then minima.

\emph{Ad (b).} For fixed $t$, $u$ is $L$-Lipschitz as the function of $x$, as an infimum of $L$-Lipschitz functions. Indeed for each $y$, the function $x \mapsto t\alpha(|x-y|/t)$ is $L$-Lipschitz. As for the Lipschitz property with respect to $t$, the argument in the proof of \cite[Chapter 3.3.2, Lemma 2]{evans} shows that if $f$ is $L$-Lipschitz, then for any $x$,
\begin{displaymath}
|u(t,x) - f(x)| \le Mt,
\end{displaymath}
where $M = \max_{|x| \le L} \alpha^\ast(x) = CL^2$. Now the Lipschitz condition with respect to $t>0$ (for general $f$, which may not be $L$-Lipschitz) follows from the semigroup property and the fact that $Q_t f$ is an $L$-Lipschitz function of $x$.

\emph{Ad (c).}  Using again the fact that $Q_t f$ is $L$-Lipschitz, it is enough to consider the case when so is $f$. One can then repeat the proof of \cite[Chapter 3.3.2, Theorem 5]{evans}, provided that one can prove that the infimum in the definition of $Q_t f$ is in fact achieved. To this end, it is enough to note that whenever $|y-x| > 2CLt$ we have, denoting $z = x+ 2CLt(y-x)/|x-y|$,
\begin{align*}
f(y&)+t\alpha(|x-y|/t) \\
&= f(z) +  t\alpha(|x-z|/t) + (f(y) - f(z)) + t\alpha(|x-y|/t) - t\alpha(|x-z|/t)\\
&\ge
f(z)  + t\alpha(|x-z|/t) - L|z-y| + t\alpha(|x-y|/t) - t\alpha(|x-z|/t)\\
&=f(z)  + t\alpha(|x-z|/t),
\end{align*}
where the inequality holds by the Lipschitz property of $f$ and the last equality follows from the definition of $\alpha$ (and the fact that $z$ lies on the interval with endpoints $x$ and $y$).
Thus $Q_t f(x) = \inf_{|y-x| \le 2CL} \{f(y) + t\alpha(|y-x|/t)\}$ and the existence of the minimizer follows from compactness and continuity of $f$ and $\alpha$.
\end{proof}

\bibliographystyle{amsplain}	
\bibliography{ConvexPoincareTransportation}

\end{document}